\newcommand{\andr}[1]{{\color{blue} #1}}
\newcommand{\andre}[1]{{\color{red} #1}}
\numberwithin{equation}{section}
\newcommand{\Tcal}{\mathcal T}
\newcommand{\halmos}{\rule{1ex}{1.4ex}}
\newcommand{\eqa}{\begin{eqnarray}}
\newcommand{\ena}{\end{eqnarray}}
\newcommand{\eq}{\begin{equation}}
\newcommand{\en}{\end{equation}}
\newcommand{\eqs}{\begin{eqnarray*}}
\newcommand{\ens}{\end{eqnarray*}}
\def\qed{$\halmos$}
\def\emptyset{\varnothing}
\def\ti{\to\infty}
\def\d{\delta}
\def\e{\varepsilon}
\def\S{\Sigma}
\def\Sc{{{\rm Sys}}}
\newcommand{\R}     {\mathbb{R}}
\newcommand{\Z}     {\mathbb{Z}}
\newcommand{\N}     {\mathbb{N}}
 \newcommand{\floor}[1]{\left\lfloor #1 \right\rfloor}
\def\1{{\mathchoice {1\mskip-4mu\mathrm l}      
{1\mskip-4mu\mathrm l}
{1\mskip-4.5mu\mathrm l} {1\mskip-5mu\mathrm l}}}
\newcommand{\ssup}[1] {{{\scriptscriptstyle{({#1}})}}}
\def\comment#1{}
\newtheorem{theorem}{Theorem}[section]
\newtheorem{lemma}[theorem]{Lemma}
\newtheorem{proposition}[theorem]{Proposition}
\newtheorem{remark}[theorem]{Remark}
\newtheorem{definition}[theorem]{Definition}
\newtheorem{example}[theorem]{Example}
\newcommand{\heap}[2]{\genfrac{}{}{0pt}{}{#1}{#2}}
\renewcommand{\andre}{}
\renewcommand{\andr}{}
\renewcommand{\d}{{\rm d}}
\newcommand{\eps}{\varepsilon}
\newcommand{\Ccal}   {{\mathcal C }}
\newcommand{\Fcal}   {{\mathcal F }}
\newcommand{\Gcal}   {{\mathcal G }}
\newcommand{\Wcal}   {{\mathcal W }}
\renewcommand{\e}   {{\operatorname e }}
\def\ignore#1{}
\def\Def{\ :=\ }
\def\bbP{\mathbb{P}}
\def\bbE{\mathbb{E}}
\def\bbP{\mathbb{P}}
\def\bP{\mathbf{P}}
\def\giv{\,|\,}
\title{Attraction properties for general urn processes and applications to a class of  interacting  reinforced particle systems.}
\author{}
\date{}
\begin{document}


\thispagestyle{empty}
\def\thefootnote{\fnsymbol{footnote}}
\maketitle
\begin{center}
{\large\sc Jiro Akahori\footnote{Department of Mathematical Sciences, Ritsumeikan University Email: akahori@se.ritsumei.ac.jp}},
{\large\sc Andrea Collevecchio\footnote{School of Mathematical Sciences, Monash University Email: andrea.collevecchio@monash.edu}},   {\large\sc Timothy Garoni\footnote{ARC Centre of Excellence for Mathematical and Statistical Frontiers (ACEMS) and School of Mathematical Sciences, Monash University  Email: tim.garoni@monash.edu}} and  {\large\sc Kais Hamza\footnote{School of Mathematical Sciences, Monash University Email: kais.hamza@monash.edu}}
\end{center}
\vspace*{0.1cm}
\begin{center}
(\today)\\
\end{center}
\noindent {\bf Abstract.}
We study   a  system of interacting  reinforced random walks defined on polygons. At each stage, each particle chooses an edge to traverse which is incident to its  position.
We allow the probability of choosing a given edge to depend on the sum of, the  number of times that particle traversed that edge, a quantity which depends on the behaviour of the other particles, and possibly external factors.
We study  localization properties of this system and our main tool is a new result we establish for a  very general class of urn models.
More specifically, we  study attraction properties of  urns composed of balls with two distinct colors which evolve as follows. At each stage a ball is extracted.  The probability of picking a ball of a certain color evolves in time. This evolution may  depend not only on the composition of the urn but also on external factors or internal ones depending on the history of the urn.
A  particular example of the latter is when the reinforcement is a function of the composition of the urn and the biggest run of consecutive picks with the same color. The model that we introduce and study is very general, and we prove that under mild conditions, one of the colors in the urn is picked only finitely often.



\noindent {\bf AMS 2000 subject classification:} 60K35
\bigskip

\noindent {\bf Keywords:} urn processes, strong reinforcement, system of reinforced random walks.

\setcounter{footnote}{0}
\def\thefootnote{\arabic{footnote}}

\newpage
\tableofcontents

\section{Introduction}\label{intro}

\subsection{Description of the  main models and results}\label{MAIN}

This  paper contains two main results concerning an  interacting particle  system and a general urn model. The results are connected, as the latter concerns the main tool we use in our study of the former. We do however emphasize that the main result on the class of general urns is also of independent interest.

For the first result, we study a  system of interacting particles  defined on a polygon.
Each particle reinforces an edge when passing through it, and can be affected by the behaviour of all other particles as well.
More precisely, given that a particle is located on a given vertex, its next transition would be towards a neighbor of that vertex.
 The probability that it traverses a given edge is proportional to a quantity which depends on how many times that particle has previously traversed that edge, how many other particles traversed that edge, and other factors.
In order to study this system we need to keep track not only of the position of the particles but also of the transition probabilities which evolve randomly in time.   We are able to understand the main features of  the behaviour of each single particle by keeping track of its position and an auxiliary Markov chain on a larger space.  Under general assumptions we infer that if the reinforcement is strong enough,  each of the particles gets `stuck' on exactly one edge. To understand the impact on applications of this model see, for example,  \cite{OS}. In fact,
 many biological systems can be modeled by random walkers that deposit a non-diffusible signal   that
modifies the local environment for succeeding passages. The response to the environment
frequently involves movement toward or
away from an external stimulus (see \cite{OS}). In these
systems, one example of which is the motion of myxobacteria, the question arises as to whether aggregation
happens.

Our results are, to the best of our knowledge, the first involving interacting random walks with strong reinforcement. We focus on the case when the underlying graph is a polygon.
A lot of effort has been devoted to the study of single strongly reinforced particles on polygons, resulting in  a long-standing open problem to establish localization (see \cite{S08}, \cite{LimT07}, \cite{D2015}, and Section~\ref{literaturereview} for a literature review).
Moreover,  we believe that our method can, in principle, be pushed to describe the behaviour of many reinforced interacting particles on  more general graphs.

For our second main result we consider a very general urn model  where the probability to pick a ball of a given colour depends on intrinsic features of the colour, the past behavior of the urn process as well as external factors.  In particular, at each stage the composition of the urn changes by adding exactly one ball and the probability to pick a ball of a particular colour is determined by an underlying Markov process defined on a general space.
This is a significant generalization of P\'olya urns, as the transition probabilities not only may depend on the actual composition of the urn but also on the whole past and even other external factors.

Urn models have been used successfully to describe the  evolution of systems composed of elements which interact among each other. Such systems are
 of great interest in several areas of application: in medicine, clinical trials and the evolution of a system of interacting cells, in the social sciences, networks and preferential attachment (see for example \cite{Col13}), in physics and chemistry, the evolution over time of the concentration of certain molecules within cells (see for example \cite{Pem}).

For a literature review  on reinforced random walks and urns see Section~\ref{literaturereview}.
In the next section we introduce a toy model that will shed some light on the general structure we are considering in this paper.

\subsection{Example of a reinforced random walks system on a triangle with two particles}\label{motex}
\begin{figure}
\includegraphics[height=8in]{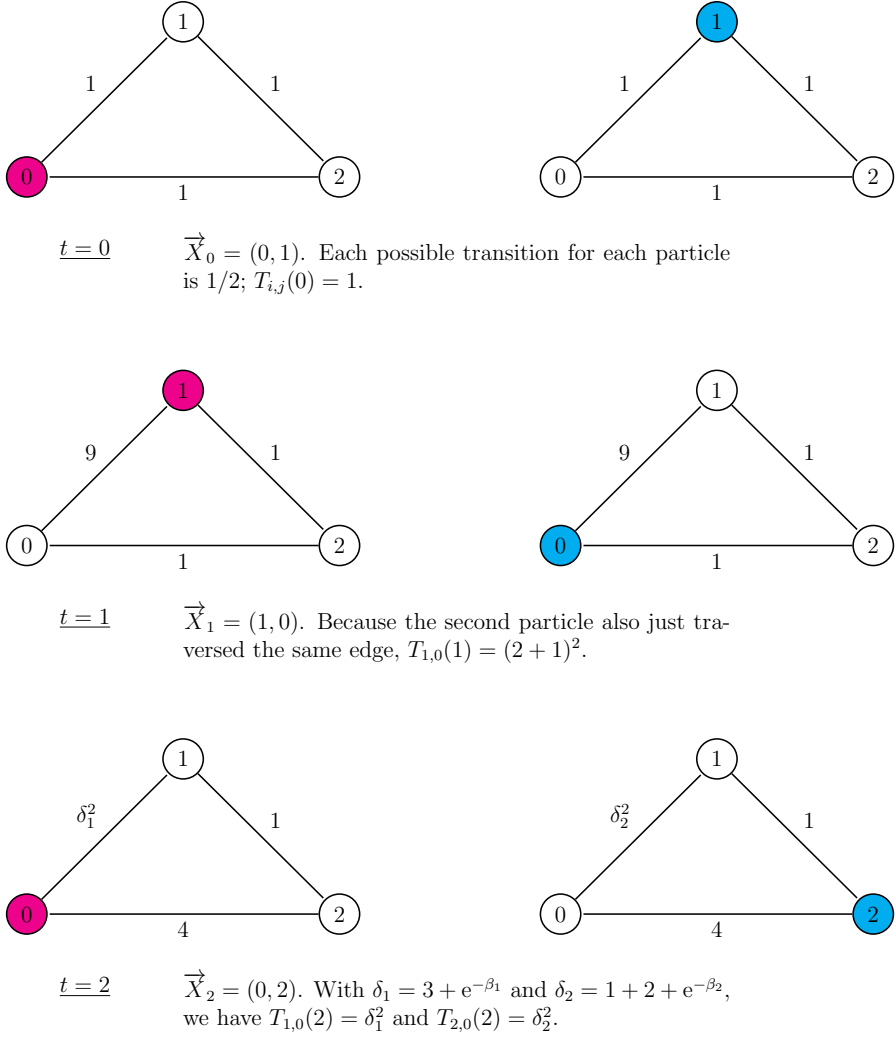}
\vspace*{-2cm}
\caption{Three steps of a system of reinforced random walks; Particle 1 is on the left and Particle 2 is on the right.}
\end{figure}
Consider a system of reinforced random walks defined as follows. The system consists of exactly two particles that take values on the vertices of a triangle labeled 0, 1 and 2. Each particle jumps at each stage to one of the other two vertices (the two nearest neighbors). We label the edges using the same set $\{0, 1, 2\}$,
where edge $j$ is the one connecting $j$ to $j+1$. All labels are understood modulo 3 so that a label of 3 equates to a label of 0.
Initially, each particle assigns a weight of one to each edge. These weights will change in time.

Starting from an initial configuration $\vec{X}_0=(X^{\ssup 1}_0,X^{\ssup 2}_0)$, we recursively define the stochastic process $\vec{{\bf X}}= \{\vec{X}_t\colon t \in \N\cup\{0\}\}$ in the following way. Let
$$ N_{i, j}(s) \Def 1+ \sum_{t=1}^s \1_{\{X^{\ssup i}_t,  X^{\ssup i}_{t-1}\}= \{j, j+1\}},$$
be the number of times particle $i$ traverses edge $j$ in either direction (plus one), and
$$ T_{i, j}(s) = \left( N_{i, j}(s) + \sum_{t=1}^s \e^{- \beta_u (s - t) } \1_{N_{u, j}(t) - N_{u, j}(t-1) = 1} \right)^2, \qquad \mbox{where $u \in \{1,2\} \setminus i$}.$$
$T_{i,j}(s)$ represents the reinforcement function for particle $i$ on edge $j$. It takes into account the times the other particle traversed edge $j$, discounted in time with rate $\beta_u>0$.

Conditional on the ``present'', $\vec{X}_s$ together with $\{T_{i, j}(s), i\in\{1, 2\}, j \in \{0 ,1 ,2\}\}$, the probability that particle $i$ jumps to vertex $j+1 \mod 3$ is given by
$$ \frac{T_{1, j}(s)}{T_{1, j}(s) + T_{1, j-1}(s)},$$
where $j-1$ is mod 3.

In this system of particles, each particle remembers its past, and detects the chemicals, e.g. slime for myxobacteria or pheromones for ants, that the other particles leave in their trails. We suppose that this chemical evaporates in time, and this explains the discount factor included in the reinforcement functions.

A particular evolution of this system up to time two is depicted in Figure 1 where the two particles were decoupled and their movements represented on separate triangles.
We anticipate that  each particle will visit one of the vertices only finitely often. In other words, each of the particle gets stuck on one of the edges, possibly different edges for different particles.\\
A more general example, one with $K$ particles on a general polygon, will be described in Example~\ref{exa1}. It forms part of a more general setting which is introduced in the next Section.

\subsection{Interacting reinforced random walks on finite polygons.}\label{appl}
Consider a polygon with exactly $v$ vertices, and label them using the set of  integers $V \Def \{0,1, \ldots, v-1\}$. We assume that  $j \in V$ is connected exactly to   $j+1$ and $j-1$. Here $+$ and $-$ are understood modulo $v$ and we shall simply write $j\oplus1$ and $j\ominus1$.

We denote this graph by   $\Gcal = (V, E)$, where $V$ is the set of vertices and $E$ is the set of edges.  We call $j$ the edge connecting vertex $j$ to $j\oplus1$. By this labelling, we can identify $E$ with $\{0, 1, \ldots, v-1\}$.

We consider, on a filtered probability space $(\Omega, \Wcal, \{\Wcal_s\}_{s\in\N \cup \{0\}}, \bbP)$, a system of  $K$ reinforced random walks that interact through reinforcement. More specifically, with $\N_0 \Def \N \cup \{0\}$,
denote the $\ell$-th random walk by ${\bf X}^{\ssup \ell} \Def \{X_s^{\ssup \ell}\colon s \in \N_0\}$, for $\ell \in [K] \Def \{1, 2, \ldots, K\}$ and set
$$ \vec{\mathbf{X}} \Def \Big({\bf X}^{\ssup 1}, {\bf X}^{\ssup 2}, \ldots {\bf X}^{\ssup K}\Big),\;\;
\mbox{ and } \;\;
\vec{X}_s \Def \Big( X^{\ssup 1}_s, {X}^{\ssup 2}_s, \ldots {X}^{\ssup K}_s\Big).$$

Reinforcement occurs through the number of traverses of a particular edge, augmented by an initial weight, but can also be affected by other factors, be it environmental or path-dependent.
To this end, we suppose that, for each edge $j\in E$ and each process index $\ell \in [K]$, we are given a $\Wcal_0$-measurable random variable $N_{\ell, j}(0)$, the initial weight, as well as a $\Wcal_s$-adapted process $\Xi_{\ell, j}(s)$, representing the other factors. Together, they form the reinforcement functions
\begin{equation}\label{defti}
T_{\ell, j} (s) \Def   \Big(N_{\ell, j}(s)+ \Xi_{\ell, j}(s)\Big)^\alpha
\end{equation}
for some $\alpha>1$, where 
\begin{equation}\label{enej}
\begin{aligned}
N_{\ell, j}(s)  \Def  N_{\ell, j}(0)  + \#\left\{ t \colon t \in [s],\;\; \{X_{t-1}^{\ssup \ell},  X_t^{\ssup \ell}\} =\{j, j \oplus 1 \} \right\}.
\end{aligned}
\end{equation}

Finally, each walk takes values on the vertices of $\Gcal$ and jumps, at each step, to one of two neighbors according to the rule
$$
\begin{aligned}
   \bbP(X^{\ssup \ell}_{s+1} = j  \oplus 1 \;|\; X^{\ssup \ell}_s = j, \Wcal_s) &= \frac{T_{\ell, j} (s)}{T_{\ell, j} (s) + T_{\ell, j\ominus 1 } (s)}\\
    &= 1 - \bbP(X^{\ssup \ell}_{s+1} = j \ominus 1 \;|\; X^{\ssup \ell}_s = j,\, \Wcal_s).
\end{aligned}
$$
Further, we assume that given $\Wcal_s$, the transitions of the $K$ particles at time $s+1$ are conditionally independent.

It is natural, and mathematical convenient, to restrict the choices of processes $\{\Xi_{\ell,j}(s)\}_{s\in\N_0}$ as to ensure that the process
\begin{equation}\label{madef}
M_s \Def (\vec{X}_{s},  \vec{N}_s  , \vec{T}_s ), \qquad \mbox{ for $s \in \N_0$},
\end{equation}
evolves as a strong Markov chain on a countable subset $\S$ of $V^{K} \times \N^{Kv} \times (0, \infty)^{Kv}$.

The quantities appearing in \eqref{madef} are defined as follows. For each $j \in V$ and $s \in \N_0$ let
\begin{eqnarray*}
 \vec{N}_{j} (s) &\Def& (N_{1, j} (s), N_{2, j} (s), \ldots, N_{K, j}(s)), \quad \mbox{and} \quad  \vec{N}_s \Def \big(\vec{N}_{0} (s), \vec{N}_{1} (s), \ldots, \vec{N}_{v-1} ({s})\big);\\
\vec{T}_{j} (s) &\Def& (T_{1, j} (s), T_{2, j} (s), \ldots, T_{K, j}(s)), \quad \mbox{and} \quad  \vec{T}_s \Def  \big(\vec{T}_{0} (s), \vec{T}_{1} (s), \ldots, \vec{T}_{v-1} (s)\big).
\end{eqnarray*}

Given $m\in\S$, we denote by $\bbP_m$ the probability measure on $(\Omega, \Wcal)$ that sets the initial states of the chain to $m$: $\bbP_m(M_0 = m) =1$.

 \begin{definition} With a slight abuse of notation, set $ \Sc(m) \Def  (\vec{\bf X}, \bbP_m, \{\Wcal_s\}_{s\in\N_0}, \{\Xi_{\ell,j}(s)\}_{s\in\N_0})$, which is the system of particles just described with starting point $m \in\andre{\S}$. We denote by $ \Sc = \{\Sc(m)\colon m \in \andre{\S}\}$.
 \end{definition}

 \begin{theorem}\label{cyclethm}
Consider $\Sc(m)$ with a fixed $m \in \S$.  Assume that $\sup_{\ell, j, s}\Xi_{\ell, j}(s) \le R$, a.s.,  for some constant $R$.
Then  each $ {\bf X}^{\ssup \ell}$,  with $\ell \in [K]$, will localize on one edge, i.e.   for each $\ell \in [K]$,  there exists a vertex $j_\ell \in V$ such that  for all  large $k $ we have that $X^{\ssup \ell}_k \in \{j_\ell, j_\ell \oplus 1\}$.
\end{theorem}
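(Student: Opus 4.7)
My plan is to reduce Theorem~\ref{cyclethm} to the general urn theorem established earlier in the paper, applied vertex by vertex along the polygon. Fix a particle index $\ell \in [K]$ and a vertex $j \in V$. At each visit of $\mathbf{X}^{\ssup{\ell}}$ to $j$, the walk picks one of the two incident edges $\{j, j \ominus 1\}$ with probability $T_{\ell, j}(s)/(T_{\ell, j}(s) + T_{\ell, j \ominus 1}(s))$ and its complement, so the successive choices at $j$ form a two-colour urn (colour ``right'' if the walk moves to $j \oplus 1$, ``left'' if it moves to $j \ominus 1$). Writing $T_{\ell, j'}(s) = (N_{\ell, j'}(s) + \Xi_{\ell, j'}(s))^\alpha$ with $\alpha > 1$ and $\Xi_{\ell, j'} \le R$, the composition of this urn enters through $N_{\ell, j}$ and $N_{\ell, j \ominus 1}$, while the remaining dependence --- traversals of these two edges initiated at their other endpoints, the trajectories of the other $K-1$ particles, and the bounded terms $\Xi_{\ell, j'}$ --- is absorbed into the ``external factors'' permitted by the general urn framework, driven by the Markov chain $M_s$ of \eqref{madef}. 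The general urn theorem then yields, on the event that $j$ is visited infinitely often by $\mathbf{X}^{\ssup{\ell}}$, that one of the two colours is drawn only finitely many times; equivalently, there is an eventually preferred direction $f(j) \in \{j \ominus 1, j \oplus 1\}$ such that almost all late departures from $j$ are toward $f(j)$.

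Next I combine these local statements using the cycle structure. Let $V_\infty^{\ssup{\ell}} \subseteq V$ be the set of vertices visited infinitely often by $\mathbf{X}^{\ssup{\ell}}$. Since transitions are to neighbours and the walk never halts, $V_\infty^{\ssup{\ell}}$ is a connected sub-arc of the polygon with $|V_\infty^{\ssup{\ell}}| \ge 2$, and the theorem is equivalent to $|V_\infty^{\ssup{\ell}}| = 2$. Assume for contradiction $|V_\infty^{\ssup{\ell}}| \ge 3$. First suppose $V_\infty^{\ssup{\ell}}$ is a proper sub-arc $\{a, a+1, \ldots, b\}$ with $b - a \ge 2$; eventual confinement forces $f(a) = a+1$ and $f(b) = b-1$, and since iterating the self-map $f$ on a finite arc where every step is $\pm 1$ must eventually loop with period $2$, there is an adjacent pair $j^\ast - 1, j^\ast$ with $f(j^\ast) = j^\ast - 1$ and $f(j^\ast - 1) = j^\ast$; the walk is then eventually trapped on that single edge, contradicting infinite visits to the remaining vertices of $V_\infty^{\ssup{\ell}}$. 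The same ``no reversing pair'' argument applies when $V_\infty^{\ssup{\ell}} = V$ (any reversing pair would again yield an edge trap inconsistent with $V_\infty^{\ssup{\ell}} = V$ for $v \ge 3$), forcing $f$ to be a consistent rotation around the cycle. But then every edge is traversed at the same asymptotic linear rate, $N_{\ell, j}(s)/N_{\ell, j \ominus 1}(s) \to 1$, so $T_{\ell, j}(s)/T_{\ell, j \ominus 1}(s) \to 1$ and the jump probabilities at each $j$ tend to $1/2$; in this regime the urn at $j$ cannot have drawn one colour only finitely often, contradicting the conclusion of the first step.

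The main obstacle is the first step: the external contributions to $T_{\ell, j}$ and $T_{\ell, j \ominus 1}$ grow with $s$, can be of the same order as the urn composition itself, and are coupled to the urn dynamics through the walker's trajectory and through the other $K-1$ particles. The uniform bound $\Xi_{\ell, j'} \le R$ together with the super-linear exponent $\alpha > 1$ is precisely what allows the urn composition to asymptotically dominate these perturbations and places the setup inside the general urn theorem's hypotheses; the Markov chain $M_s$ of \eqref{madef} packages the full dependent history as a driving process of the required form. Once the first step is secured, the polygon combinatorics of the second step is a short deterministic case analysis.
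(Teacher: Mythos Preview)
Your two-step outline mirrors the paper's architecture, and your step~2 (the cycle combinatorics ruling out $|V_\infty^{\ssup{\ell}}|\ge 3$, including the rotation case via a second Borel--Cantelli argument) is essentially correct and close to the paper's endgame with the events $G_0,G_1,G_2$. The substantive gap is in step~1.

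Theorem~\ref{rubin} does not assert that one colour is picked finitely often; it asserts that Assumptions~I and~II cannot both hold. To conclude $\alpha(m)=0$ for the vertex-$j$ urn you must, under the hypothesis $\alpha(m)>0$, \emph{construct} a sequence $(m_n)$ and verify II(i)--(iii). You do not do this, and for the naive single-vertex urn it cannot be done directly. The reinforcement is $f_1(k)=(N_{\ell,j}(\tau_k)+\Xi)^\alpha$, and $N_{\ell,j}$ counts \emph{all} traversals of edge $j$, including arrivals at $j$ from $j\oplus 1$. The discrepancy between $N_{\ell,j}$ and twice the white-ball count equals the winding number of $\mathbf{X}^{\ssup{\ell}}$ around the cycle, which is unbounded and can grow linearly; it is \emph{not} controlled by $\Xi\le R$. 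With this uncontrolled term present, neither the lower bound on $\langle\Tcal_i(m_n)\rangle$ needed for II(ii) nor the comparison~\eqref{ainp} needed for II(iii) is available. (Theorem~\ref{Examp1} does not help either: it requires the external perturbation to be $O(\ln n)$, not linear.)

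The paper breaks this circularity in two stages. First (Lemmas~\ref{impin} and~\ref{lemap}) it shows that on $I^c$ the chain reaches configurations in $\Pi$ where some edge $\iota$ trails both neighbours by at least $R_1\ge R+2$; a bespoke GUP built on the \emph{pair} of vertices $\{\iota,\iota\oplus1\}$ then satisfies II(iii), because the $R_1$-lead absorbs the $\Xi$-term and, crucially, every red excursion must re-enter through edge $\iota\ominus1$ or $\iota\oplus1$, so the winding is accounted for symmetrically. This yields a time $H^*$ after which edge $\iota$ is never crossed and the winding number is frozen. Only then (Lemmas~\ref{ccent} and~\ref{ainf0int}) can the single-vertex urns be handled: on $\{H^*=0\}$ every departure from $j$ returns via the same edge, giving the factor-of-$2$ control on $N_{\ell,j}$ that makes II(ii)--(iii) checkable, again after waiting for a lead of $R_1$ via $\Pi_j'$. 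Your proposal collapses this two-stage argument into a single invocation of the urn theorem without supplying the configurations that make its hypotheses verifiable.
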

Notice that the model is only well defined for a finite number of particles. This circumvents the difficulty of dealing with infinite products.
\begin{example}\label{exa1} \andre{Define  $\Xi_{\ell, j}(s)$  as follows. }
$$  \Xi_{\ell, j}(s) = \sum_{\heap{\kappa \in [K] }{\kappa \neq \ell}} \sum_{t = 1}^s \e^{- \beta (s - t) } \1_{N_{\kappa, j}(t) - N_{\kappa, j}(t-1) = 1},
$$
\andre{for a fixed $\beta>0$, $\ell \in [K]$, $j  \in E$ and $s \in \N_0$}.
In this case, the  process $\bf{X}^{\ssup \ell}$  is influenced by the number of times the other particles passed a given edge, discounted in time.
To see why $\{M_s\}_{s \in \N_0}$ is  a Markov chain in this case, notice that
$$ \Xi_{\ell, j}(s+1) = \e^{- \beta} \Xi_{\ell, j}(s) + \sum_{\heap{\kappa \in [K] }{\kappa \neq \ell}}  \1_{N_{\kappa, j}(s+1) - N_{\kappa, j}(s) = 1}.$$
Moreover,
$$ \Xi_{\ell, j}(s) \le \frac{\e^\beta K}{\e^\beta -1}.$$
In the case of $K=2$, we can allow different discount factors $\beta_\kappa$, as described in the Section~\ref{motex}. In fact, the markovian structure is preserved.

In this example, particles enjoy a support system with diminishing memory. Each time a particle traverses an edge, the likelihood that it is traversed by other particles is increased. However, this effect diminishes (exponentially) over time. An example of such behaviour could be found in ants. As they travel along a path, they deposit pheromone, a behaviour-altering chemical agent that encourages other ants to take the same path. This chemical evaporates over time reducing its ability to reinforces the path. The system reinforces recently visited edges.

\end{example}
\begin{example}
For $\ell \in [K]$, $j \in E$ and $s \in \N$, set $\tau_{\ell, j}(s)  = 0$ on the event $N_{\ell, j}(s-1) = N_{\ell, j}(0)$ and
\begin{equation}\label{randti}
 \tau_{\ell, j}(s) \Def \sup\{t \in [s-1]\colon  N_{\ell, j}(t) > N_{\ell, j}(t-1)\}
\end{equation}
otherwise.
Set
$$  \Xi_{\ell, j}(s) = \sum_{\heap{\kappa \in [K] }{\kappa \neq \ell}} \1_{\tau_{\kappa, j}(s) \ge \tau_{\ell, j}(s)}.$$

\end{example}

\subsection{Definition of Generalized Urn Processes (GUP).}\label{GUP}
Before we introduce the definition of Generalized Urn Processes (GUP), we analyze a key example, where the transition probabilities of the urn depend not only on its actual composition but also on other factors.

\begin{example}\label{Example1} Consider an urn  which initially contains exactly two balls, one white and one red. At each stage, a ball is added according to a certain random rule described below. Fix an increasing function $\Psi \colon (0, \infty) \mapsto (0, \infty)$ with the following properties.  Either
\begin{itemize}
\item $\lim_{x\ti} \Psi(x)/\Psi(x-1) = \infty$, or
\item $\Psi$  is twice  continuosly differentiable, $\int_{1}^\infty (1/\Psi(u)) {\rm d}u <\infty$, $\liminf_{x \ti} \Psi'(x)>0$,  $\lim_{x\ti} \Psi(x)/\Psi(x-1) $ exists in $[1, \infty)$, and 
$$ \lim_{x \ti} \frac{\Psi''(x) \Psi(x)}{\Psi'(x)^2}  \mbox{ exists in } (0 ,\infty].
$$
\end{itemize}
Let $\{Z_n\}_{n \in \N_0}$ be a homogeneous  Markov chain taking values in a countable state space and adapted to a given filtration $\{\Gcal_n\}_n$.
Denote the composition of the urn at time $n$ as follows. Set $N_w(n)$ (resp. $N_r(n)$)  the number of  white  (resp.  red) balls in the urn at time $n$,   with $N_w(n) +N_r(n) = n+2$. Then, conditional on $\Gcal_n$, the probability to pick a white ball at stage $n+1$ is
$$ \frac{\Psi(N_w(n)+g_1(Z_n))}{\Psi(N_w(n)+g_1(Z_n)) + \Psi(N_r(n)+g_2(Z_n))},$$
where $g_1$ and $g_2$ are two given positive functions. Note that $\{\Gcal_n\}_n$ is possibly larger than the natural filtration of the urn process. We  assume that $g_i(Z_n) \le \theta(n+2)$,  a.s. for all large $n$, for $i \in \{1, 2\}$, and some positive increasing function $\theta$ satisfying
\begin{equation}\label{prot}
\frac{\Psi(z + a\ln z )}{\Psi\big(z + \theta(z)\big)} \ge  1 + \frac 1{2z}, \qquad \mbox{for some $a \in (0,1/2)$ and for all large $z$}.
\end{equation}
We prove (see Theorem~\ref{Examp1} below) that, under the above assumptions the urn localizes on one of the two colours. More precisely one of the colors will be picked only finitely often. This a by-product of a more general result, i.e. Theorem~\ref{rubin}. Examples of functions which satisfy the properties above are $\Psi_1(x) =  x^\alpha$, with $\alpha>1$,  $\Psi_2(x) = x^\alpha (\ln x)^\beta$ for $\alpha >1, \beta>0$,   and $\Psi_3(x) = \e^{ \gamma x}$, for some $\gamma >0$.  For these choices of $\Psi$,  we can choose $\theta(x)= a' \ln n$ for any $a' \in (0, 1/2).$
\end{example}

 Define a generalized urn process, called GUP$(\mathbf{M}, M_0 =m, f_1, f_2)$, as follows. Informally, the key in this definition is that there is a driving Markov process, defined on a larger abstract space, which carries the information about the transition probabilities and the composition of the urn.

Formally,   consider an urn which initially contains a fixed number of balls,  $\phi_{1}(0)$ white and $\phi_{2}(0)$ red, where $\phi_{1}(0), \phi_{2}(0) \in \N$.  Suppose that at each step a ball is picked, its colour observed, and returned to the urn together with another ball of the same colour. Given the history of the process up to time $k-1$, the probability of picking a white ball is
\begin{equation}\label{prob}
\frac{f_1(k)}{f_1(k)+f_2(k)},
\end{equation}
where $f_{1}(k)$ and $f_{2}(k)$  are positive, random and depend on the history of the process up to time $k-1$.  We set $\vec{f}(k) = (f_{1}(k), f_{2}(k))$, for $k\in \N$.
Denote by $\vec{\phi}(k)= (\phi_1(k), \phi_2(k))$, the composition of the urn at time $k$; $\phi_1(k)$ white balls and $\phi_2(k)$ red balls.
We assume that the urn process satisfies the following properties.
\begin{itemize}
\item [A)] There exists a countable  space $\Sigma$, such that for each $m \in \Sigma$ there exists a probability space $(\Omega, \Fcal, \bbP_m)$
and a  homogeneous Markov chain $\mathbf{M} \Def\{M_{k}, k \ge 0\}$ on $\S$,  which satisfies the following.  For each $m \in \Sigma$, we have $\bbP_m(M_0 = m) =1$.  Moreover $\vec{f}(k+1) = G_1(M_k)$ and $\vec{\phi}(k) = G_2(M_k)$  for some functions $ G_1$ and $G_2$, with $k \in \N_0$.
\item [B)]There exists $\eps>0$ such that $\bbP_m(f_i(k) >\eps) =1$ for all $m \in \S$, $ i \in \{1, 2\}$ and $k \in \N$.
\end{itemize}
The process GUP$({\bf M},  M_0 =m,  f_{1}, f_{2})$, described above,  is a strong generalization of P\'olya urns. Our aim is to understand the behaviour of the urn based on the  behaviour of each of the  $f_i(k)$, with $i\in \{1, 2\}$, $k\in \N$. The processes $f_{i}(\cdot)$ are called the reinforcement functions associated to the GUP. We emphasize that our results below only assume the existence of general quantities such as the process  $M_k$ and not their knowledge.

%
 \noindent In what follows $(x_n)$ denotes a sequence, whereas $\{x_n\}$ denote a set. The difference is that the sequence allows repetitions. We denote by $x \wedge y$ the minimum between $x$  and $y$ and by
$x \vee y$ the maximum between $x$  and $y$.
For any countable set $A \subset [\eps, \infty)$ define
$\langle A \rangle \Def \sum_{t \in A}\frac 1{t},$ with the usual convention that the sum over the empy set is zero. For each $m \in \S$, define the  sequence  of random times where we pick the \lq$i$-th colour\rq, as follows. For $ i\in \{1,2\}$, let   $u_i(1) \Def \min\{ \ell \ge 1 \colon \phi_i(\ell+1) - \phi_i(\ell)>0\}$. Suppose we defined $u_i(k)$, then let $u_i(k+1) \Def \min\{ \ell > u_i(k)  \colon \phi_i(\ell+1) - \phi_i(\ell)>0\}$.
Let
$$d_i(k, m) \Def \sup\{ c \colon \bbP_m\big(f_i(u_i(k)) < c\big) = 0 \} = {\rm essinf} f_i(u_i(k)),$$
and set $ B_{i}(m) = (d_i(k, m) \colon  k \in \N).$  Recall that $B_i(m)$ can list the same element more than once.  Note that the quantities $d_i(k, m)$ and $\langle  B_{i}(m) \rangle$, with $i \in \{1, 2\}$, $k\in \N$ and $m\in \S$,  are deterministic.

\noindent{\bf Assumption I.}
 There exists $i \in \{1, 2\}$ such that $ \langle  B_{i}(m) \rangle< \infty
 $, for all $ m \in \S$.

\begin{remark} Assumption I becomes a familiar condition in reinforced random walks and generalized P\'olya urns when the reinforcement functions only depend on the actual composition of the urn; more specifically when   \eqref{prob} holds for
$ f_i (n) = g_i(\phi_i(n))$, with $ i\in \{1, 2\}$, for a pair of deterministic functions $g_i \colon \N \mapsto (0, \infty)$. In this case,
$\langle  B_{i}(m) \rangle = \sum_{k=\phi_i(0)+1}^\infty (1/g_i(k))$  (as $\phi_i(u_i(k)) = \phi_i(0) + k$) and Assumption I is necessary and sufficient, according to the celebrated Rubin's Theorem (see \cite{BD1990}), for one colour to be picked finitely often, a.s..
\end{remark}
Denote by $A_{\infty}$ the event that $ \lim_{k \ti} \phi_1(k) = \lim_{k \ti} \phi_2(k) = \infty$. For $n \in \N$ and $m \in \Sigma$,  set
 $\alpha(m) =  \bbP_m\big(A_\infty\big)$.
The  following Proposition  is a corollary of Proposition~\ref{prap} which can be found in the Appendix.
  \begin{proposition}\label{ref01} $\sup_{m \in \Sigma}\alpha(m) \in \{0, 1\}$.
 \end{proposition}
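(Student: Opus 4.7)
The plan is to reduce this dichotomy to L\'evy's upward convergence theorem applied to the Markov chain $\mathbf{M}$. The key observation is that $A_\infty$ is a tail event with respect to the filtration generated by $\mathbf{M}$: the event $\{\lim_k \phi_i(k) = \infty\}$ for $i=1,2$ is unaffected by dropping any finite initial segment of the trajectory, and by Assumption A each $\phi_i(k) = G_2(M_k)_i$ is a deterministic function of $M_k$, so $A_\infty$ is a function of the full trajectory of $\mathbf{M}$ that depends only on its tail.

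Set $c := \sup_{m \in \Sigma}\alpha(m) \in [0,1]$ and suppose for contradiction that $c \in (0,1)$. Since $c > 0$, by definition of the supremum there exists $m^* \in \Sigma$ with $\alpha(m^*) > 0$. Working under $\bbP_{m^*}$ with the natural filtration $\Fcal_k := \sigma(M_0,\ldots,M_k)$, the strong Markov property combined with the tail identification of $A_\infty$ gives
$$\bbE_{m^*}\bigl[\1_{A_\infty} \bigm| \Fcal_k\bigr] \;=\; \bbP_{M_k}(A_\infty) \;=\; \alpha(M_k),$$
so $(\alpha(M_k))_{k \ge 0}$ is a uniformly bounded $[0,c]$-valued martingale under $\bbP_{m^*}$.

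By L\'evy's upward theorem, this martingale converges $\bbP_{m^*}$-almost surely to $\bbE_{m^*}[\1_{A_\infty}\mid \Fcal_\infty] = \1_{A_\infty}$. However $\alpha(M_k) \le c < 1$ deterministically, so the almost-sure limit is pointwise bounded above by $c < 1$ and therefore cannot attain the value $1$. This contradicts $\bbP_{m^*}(A_\infty) = \alpha(m^*) > 0$, and hence $c \in \{0,1\}$.

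The only delicate point is verifying that $A_\infty$ lies in the tail $\sigma$-algebra of $\mathbf{M}$; this is a direct consequence of Assumption A (the urn composition is encoded in the chain) and the fact that a limit at infinity is a tail property. Everything else is standard Markov/martingale machinery and does not rely on any specific feature of the reinforcement $f_1, f_2$, so the argument works uniformly over all GUPs satisfying A and B.
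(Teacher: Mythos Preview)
Your argument is correct and takes a genuinely different route from the paper. The paper derives the proposition from a general statement (Proposition~\ref{prap} in the Appendix) which, for any event $A$ in $\sigma(\mathbf{M})$ satisfying the appropriate shift-invariance, shows directly that if $\bbP_{m_0}(A)>0$ for some $m_0$ then one can find states $m$ reachable from $m_0$ with $\bbP_m(A)$ arbitrarily close to $1$; this is done by approximating $A$ with a finite-time event $A_n\in\Fcal_n$ and conditioning on $M_n$. Your approach instead recognises $\alpha(M_k)=\bbE_{m^*}[\1_{A_\infty}\mid\Fcal_k]$ as a Doob martingale and invokes L\'evy's upward theorem, which is cleaner and avoids the approximation step entirely. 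The paper's argument has the advantage of being constructive: it produces a sequence of \emph{communicating} states $m_n$ (each reachable from $m_0$) along which $\alpha(m_n)\to 1$, and this feature is exploited later in the paper (e.g.\ in the verification of Assumption~II). Your proof yields the same conclusion implicitly---on the event $A_\infty$ the random states $M_k$ themselves furnish such a sequence---but you would need to spell this out if you later needed the communicating property. A minor remark: you only use the ordinary Markov property, not the strong one, and your careful identification of $A_\infty$ as a tail event is exactly what is needed to justify $\bbP_{m^*}(A_\infty\mid M_k=m)=\alpha(m)$, a point the paper's statement of Proposition~\ref{prap} glosses over.
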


 \begin{definition}
For any fixed sequence  $m_n \in \S$, with $n \in \N$, consider independent   GUP(${\bf M}^{\ssup n}, M^{\ssup n}_0 = m_n, f_1, f_2$),
where  \andre{(${\bf M}^{\ssup n}$, with $n \ge 1$)} denotes a sequence of   independent  copies of ${\bf M}$ with different starting points. \andre{The process ${\bf M}^{\ssup n}$ starts from $m_n$}.
Let  $\bigotimes_n \bbP_{m_n}$ be  the corresponding product measure.
 For fixed $n \in \N$,  define $\big(f_{i}(k, n)\colon   k \in \N, i \in \{1, 2\}\big)$ (resp. $(\vec{\phi}(k, n) = (\phi_1(k, n), \phi_2(k, n))\colon k \in \N)$), to be the
 sequence  of   reinforcement functions  determined by ${\bf M}^{\ssup n}$ (resp. the  sequence of composition of the $n$-th urn by time $k$). Let   $A_{\infty}(n)$  be the event that $ \lim_{k \ti} \phi_1(k, n) = \lim_{k \ti} \phi_2(k, n) = \infty$.
 Set
\begin{equation}\label{updef}
  {\rm UP}(i,n) = \andre{(}k \colon \phi_{i}(k+1, n) - \phi_{i}(k,n)>0\andre{)}.
  \end{equation}
  Let
 $$
   \Tcal_{i}(n) =  \big(f^{2}_{i}(k,n) \colon  k \in  {\rm UP}(i,n)\big).
$$
The sequences $\Tcal_{i}(n)$, with $i\in \{1, 2\}$, can be finite.

 \end{definition}

\noindent{\bf Assumption II.} Suppose $\alpha(m_{0}) >0$ and there exists a sequence $m_{n}$ that satisfies the following conditions
\begin{itemize}
\item [i)]$\sum_{n=1}^{\infty}\big(1-\alpha(m_{n})\big) < \infty$;
\item [ii)] Either $\lim_{n \ti} \langle \Tcal_1(m_n) \rangle \vee \langle  \Tcal_2(m_n)\rangle = \infty$, a.s., or
\begin{equation}\label{liminf} 
 \liminf_{n \ti}  \eta_n \frac{\langle \Tcal_1(m_n) \rangle \vee \langle  \Tcal_2(m_n)\rangle}{  \langle B_{1}(m_n)\rangle \vee \langle B_{2}(m_n)\rangle}  >0, \qquad \bigotimes_n \bbP_{m_n}-\mbox{a.s..,}
\end{equation}
where  $\eta_{n} \Def \inf \{B_{1}(m_n) \cup B_{2}(m_n)\}$.
\item [iii)]    There exist  two  families of random sequence\andre{s} $a_n(k) = a(k, m_n)$ and $b_n(k) =b(k, m_n)$ which satisfy the following.
 \begin{itemize}
 \item  For  $k, n \in \N$,
 $a_n(k)$ is measurable with respect to $\sigma(M^{\ssup n}_0, M^{\ssup n}_1, \ldots, M^{\ssup n}_{k-1})$,  $ a_n(k) \in \{1,2\}$, $b_n(k) \Def \{1, 2\} \setminus a_n(k)$, and
 \item Define $U^{\ssup n}_a \Def \{ k \ge 1 \colon \phi_{a_n(k)}(k+1, n) > \phi_{a_n(k)}(k, n)\}$, and $U^{\ssup n}_b \Def \N\setminus U^{\ssup n}_a$. For all $n$ larger than an a.s. finite random time (not necessarily stopping time),  we have
 \begin{equation}\label{ainp}
   \left(\sum_{i \in U^{\ssup n}_a} \frac 1{f_{a_n(i)}(i, n)} -  \sum_{j \in U^{\ssup n}_b}\frac 1{f_{b_n(j)} (j, n)}
\right)  \1_{A_\infty(n)} \le 0, \qquad \bigotimes_n \bbP_{m_n}-\mbox{a.s..}
\end{equation}
\end{itemize}
\end{itemize}
Our main result is the following.

\begin{theorem}\label{rubin}
 Consider GUP(${ \bf M}, M_{0} = m,  f_{1}, f_{2})$, for some $m\in \S$.
Assumptions I  and II cannot simultaneously hold.
 \end{theorem}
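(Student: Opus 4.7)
The plan is to argue by contradiction: assume both Assumption I and Assumption II hold. Up to relabelling the colours, we may take $\langle B_1(m)\rangle < \infty$ for every $m \in \Sigma$. The first reduction is a Borel--Cantelli step under the product measure $\bigotimes_n \bbP_{m_n}$: since the copies $\mathbf{M}^{(n)}$ are independent and $\sum_n (1-\alpha(m_n)) < \infty$ by (II.i), almost surely there is a random threshold $N_0$ beyond which $A_\infty(n)$ holds for every $n \ge N_0$; in particular both colours are picked infinitely often in urn $n$ for each such $n$.

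The second step is a Rubin-style exponential embedding. Enlarge the probability space by an independent family $\{E_{i,k}^{(n)}\}$ of iid $\mathrm{Exp}(1)$ variables and interpret the $n$-th urn in continuous time, so that the $k$-th pick of colour $i$ consumes a waiting time $E_{i,k}^{(n)}/f_i(u_i(k),n)$; the usual competing-clocks argument shows this preserves the discrete-time dynamics of each urn. Because $d_1(k, m_n) \le f_1(u_1(k),n)$ almost surely by the very definition of essential infimum, Assumption I yields
\[
S_1(n) \;:=\; \sum_k \frac{E_{1,k}^{(n)}}{f_1(u_1(k),n)} \;\le\; \sum_k \frac{E_{1,k}^{(n)}}{d_1(k,m_n)},
\]
whose $\bigotimes_n \bbP_{m_n}$-expectation equals $\langle B_1(m_n)\rangle < \infty$ and which is therefore almost surely finite. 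Hence colour $1$'s continuous-time clock explodes in finite time for every $n \ge N_0$.

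To extract the contradiction I would exploit the predictable labelling in (II.iii). Choosing $a_n$ appropriately --- for example constantly equal to $1$, or more adaptively to whichever colour is favoured by the current state $M^{(n)}_{k-1}$ --- the inequality there becomes a comparison of the two colours' continuous-time clocks on $A_\infty(n)$. Combining with (II.ii), which either forces $\langle \Tcal_1\rangle \vee \langle \Tcal_2\rangle \to \infty$ or makes these sums comparable to $\eta_n^{-1}(\langle B_1\rangle \vee \langle B_2\rangle)$, a Freedman/Burkholder-type second-moment bound exploiting the squared reinforcement in $\Tcal_i$ upgrades (II.iii) into a quantitative comparison that contradicts the finiteness of $S_1(n)$ along a positive-density subsequence of indices $n$. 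The main obstacle is this last reconciliation: translating the essential-infimum-based summability of (I) into a statement about the actual reinforcement $f_i$ (which depends on the whole Markov history), while matching it with the $L^2$-type control of (II.ii) and the predictable sign condition of (II.iii). This almost certainly requires a conditional Kolmogorov three-series argument relative to the $\sigma$-algebra generated by the Markov chains, together with Hoeffding--Azuma-type concentration to handle the random labels $a_n(k)$.
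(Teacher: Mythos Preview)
Your outline has the right high-level shape (argue by contradiction, Borel--Cantelli from (II.i), some embedding, then a martingale-type contradiction), but the specific embedding you choose is exactly the one the paper says fails here. The Rubin competing-clocks construction requires that the weight of colour $i$ at its $k$-th pick depend only on the fact that colour $i$ has been picked $k-1$ times; this is what lets the two clock processes be built independently and then coupled. In a GUP, $f_i(u_i(k))$ depends on the entire trajectory of the Markov chain $M_0,\dots,M_{u_i(k)-1}$, so there is no way to write $S_1(n)=\sum_k E^{(n)}_{1,k}/f_1(u_1(k),n)$ with the $E^{(n)}_{1,k}$ independent of the $f_1$'s; the ``usual competing-clocks argument'' does not preserve the discrete dynamics. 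The paper remarks explicitly at the end of Section~1.4 that they ``were not able to apply the decoupling method introduced by Rubin'' to this setting, and your last paragraph essentially concedes that you do not know how to close this gap either.

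What the paper does instead is embed each urn step-by-step into a single Brownian motion: given $M_k$, run $W$ until it hits $+1/f_{a(k+1)}(k+1)$ or $-1/f_{b(k+1)}(k+1)$, then externally randomise to obtain $M_{k+1}$. This respects the full path-dependence and produces a genuine martingale $W_{S_n}$ (where $S_n$ is the time at which the first $n$ urns are embedded, with deterministic rescalings $c_n$). Assumption~II(iii) is then literally the statement $(W_{S_n}-W_{S_{n-1}})\1_{A_\infty(n)}\le 0$ eventually; the squared reinforcements in $\Tcal_i$ arise not from a ``Freedman/Burkholder second-moment bound'' but as the expected durations of individual gambler's-ruin steps, so that (II.ii) forces $\mathbb{E}[S_\infty]=\infty$. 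The contradiction is obtained by a dichotomy on $\overline W_\infty=\lim_n W_{S_n}$: if finite, Kolmogorov three-series plus BDG give $\mathbb{E}[S_\infty]<\infty$; if $-\infty$, the eventual monotonicity gives $\sup_n\mathbb{E}[Z_n^+]<\infty$ and martingale convergence contradicts $Z_n\to-\infty$. None of this is visible from the exponential-clock picture.
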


\begin{theorem}\label{Examp1} The urn described in Example \ref{Example1} is GUP and one of the colours is picked only finitely often.
\end{theorem}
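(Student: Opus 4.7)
The plan is to derive Theorem~\ref{Examp1} as a corollary of Theorem~\ref{rubin}, in three steps: cast Example~\ref{Example1} in the GUP framework, verify Assumption~I, and rule out Assumption~II via a contradiction that forces $\bbP_m(A_\infty)=0$. For the GUP structure, I would take the driving chain $M_n := (Z_n, N_w(n), N_r(n))$ on the countable state space $\Sigma := \Sigma_Z \times \N^2$, where $\Sigma_Z$ is the state space of $Z$. The Markov and homogeneity properties of $\{M_n\}$ descend from those of $\{Z_n\}$ together with the deterministic urn-update rule; both $\vec\phi(k)$ and $\vec f(k+1) = (\Psi(N_w(k)+g_1(Z_k)), \Psi(N_r(k)+g_2(Z_k)))$ are explicit measurable functions of $M_k$, giving property~A, while $f_i(k) \ge \Psi(1) > 0$ supplies property~B.

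For Assumption~I, at the $k$-th pick of colour $i$ the composition satisfies $N_i(u_i(k)-1) = \phi_i(0)+k-1$, and since $\Psi$ is increasing and $g_i \ge 0$, $f_i(u_i(k)) = \Psi(\phi_i(0)+k-1+g_i(Z_{u_i(k)-1})) \ge \Psi(\phi_i(0)+k-1)$. Taking essential infima gives $d_i(k,m) \ge \Psi(\phi_i(0)+k-1)$, and hence $\langle B_i(m)\rangle \le \sum_{k \ge 1} \Psi(\phi_i(0)+k-1)^{-1} < \infty$ by $\int_1^\infty du/\Psi(u) < \infty$ in the second case of Example~\ref{Example1}, or by the even stronger geometric decay of $1/\Psi$ in the first case. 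Thus Assumption~I holds for every $m \in \Sigma$ and either $i$.

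For the contradiction step, suppose $\alpha(m_0) > 0$ for some $m_0 \in \Sigma$. Proposition~\ref{ref01} then yields $\sup_m \alpha(m) = 1$, so I can choose $m_n \in \Sigma$ with $\alpha(m_n) \ge 1-2^{-n}$, giving condition~(i). Both (ii) and (iii) hinge on the quantitative relationship between $f_i(u_i(k))$ and its essential infimum $d_i(k,m_n)$; the bound $g_i(Z_n) \le \theta(n+2)$ combined with \eqref{prot} and the regularity hypotheses on $\Psi$ (notably $\Psi''\Psi/(\Psi')^2 \to L \in (0,\infty]$ and the dichotomy on $\Psi(x)/\Psi(x-1)$) pin down this deviation quantitatively. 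For~(ii) this leads to $\langle \Tcal_i(n)\rangle \asymp \sum_j \Psi(\phi_i(0)_n+j)^{-2}$ and $\langle B_i(m_n)\rangle \asymp \sum_j \Psi(\phi_i(0)_n+j)^{-1}$ up to bounded multiplicative factors; combined with $\eta_n \asymp \Psi(\min_i \phi_i(0)_n)$, a direct asymptotic computation (immediate for $\Psi(x) = x^\alpha$, $x^\alpha(\ln x)^\beta$, $e^{\gamma x}$, and extended via the regular-variation-like hypotheses on $\Psi$) shows the product $\eta_n \langle \Tcal_i\rangle/\langle B_i\rangle$ stays bounded below. For~(iii), I would take $a_n(k)$ to be the currently more-reinforced colour at time $k$ (predictable by construction), and use \eqref{prot} together with a Borel--Cantelli argument on the summable probabilities that the signed sum in \eqref{ainp} is strictly positive on $A_\infty(n)$ (whose decay is controlled by $1-\alpha(m_n) \le 2^{-n}$) to upgrade the zero-mean martingale identity for the increments into the pathwise inequality \eqref{ainp} for all $n$ beyond an a.s.\ finite random index.

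Combining (i)--(iii) with $\alpha(m_0)>0$ establishes Assumption~II, which together with Assumption~I contradicts Theorem~\ref{rubin}. Hence $\alpha(m)=0$ for every $m \in \Sigma$, i.e.\ $\bbP_m(A_\infty)=0$; since on $A_\infty^c$ at least one of the two compositions is eventually constant, one of the two colours is picked only finitely often a.s. The main obstacle is~(iii): a careful predictable choice of $a_n(k)$ and the quantitative use of \eqref{prot}, together with summability control along the sequence $\{m_n\}$, are needed to promote the mean-zero martingale identity for $\sum_{k\le\ell}\bigl[X_k/f_{a_n(k)} - (1-X_k)/f_{b_n(k)}\bigr]$ into the pathwise inequality on $A_\infty(n)$.
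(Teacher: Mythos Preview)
Your GUP set-up and the verification of Assumption~I are fine and match the paper. The gap is entirely in Assumption~II~(iii).

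Your proposed argument for (iii) does not work. With your predictable choice $a_n(k)=$ ``currently more reinforced colour'', the process $\sum_{k\le \ell}\bigl[X_k/f_{a_n(k)}-(1-X_k)/f_{b_n(k)}\bigr]$ is indeed a bounded mean-zero martingale, but there is no reason its limit should be $\le 0$ on $A_\infty(n)$, and certainly no reason why $\bbP_{m_n}(\text{limit}>0,\,A_\infty(n))$ would be bounded by $1-\alpha(m_n)$. These two events are unrelated: $1-\alpha(m_n)$ measures the probability that one colour is picked only finitely often, whereas on $A_\infty(n)$ the martingale limit is a genuinely random sign (in the paper's Brownian embedding it is essentially $W_{S_n}-W_{S_{n-1}}$, which has no reason to be nonpositive). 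So the Borel--Cantelli step has no input, and \eqref{ainp} cannot be obtained this way. You also invoke \eqref{prot}, but in a martingale argument it has no evident role.

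The paper's route is quite different and supplies the missing idea. It first proves a separate lemma (Proposition~\ref{utial}): almost surely, infinitely often $|N_1(n)-N_2(n)|\ge \tfrac12\lfloor\ln n\rfloor+1$. This is an elementary Borel--Cantelli argument on runs of length $\lfloor \ln n\rfloor$ of the leading colour. One then replaces each $m_n$ by a state $m_n'$ reached at such a time (the strong Markov property preserves $\alpha(m_n')\ge \alpha(m_n)$), so that at the start of the $n$-th urn one colour leads the other by at least a logarithmic amount. With this gap in hand, $a_n(k)$ is taken \emph{constant} in $k$ (the index maximising $\langle B_i(m_n)\rangle$), and \eqref{ainp} is verified \emph{deterministically}: one bounds $\sum_k 1/f_a^{\ssup n}(k)$ and $\sum_k 1/f_b^{\ssup n}(k)$ explicitly using $g_i\le\theta$, and the comparison reduces to the analytic inequality of Proposition~\ref{spben}, namely $\sum_{k\ge n-(1/2)\lfloor\ln n\rfloor-1}\Psi(k+\theta(k))^{-1}\ge \sum_{k\ge n}\Psi(k)^{-1}$, which is exactly where hypothesis~\eqref{prot} enters. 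Assumption~II~(ii) is handled similarly via Proposition~\ref{nicefu}. In short: the logarithmic-gap lemma plus a constant $a_n$ plus the deterministic comparison encoded in \eqref{prot} is the mechanism you are missing; no martingale argument is used for (iii).
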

 Theorem~\ref{rubin} is a very general and powerful tool to decide whether $\sup_{m \in \S} \alpha(m)= 0$ for urns where the reinforcement depends not only on the composition of the urn but also on external factors (e.g. interacting urns) or internal ones depending on the history of the urn. A very particular example is when the reinforcement is a function of the composition of the urn and the biggest run of consecutive picks with same colour. To the authors' knowledge very little is found in the literature about urns with strong dependence on the past. In general the reinforcement only depends on the composition of the urn at a given stage,  revealing still a Markovian structure. This is of course quite a constraint for the applications. Think of a consumer that has to choose between two products. He will not only observe how many times in the past one of the products has been chosen over the other by other consumers, but also the actual trend. In fact it is possible that one of the products might have been chosen less times than the other but was quite successful in recent times, attracting the preference of the consumer.

    Urn processes have a variety of applications and are very important tools in statistics, medicine and network theory (see preferential attachment models).
We emphasize that we cannot recover these results from the standard general techniques already available for generalized P\'olya urns. In particular  we were not able to apply the decoupling method introduced by Rubin (see \cite{BD1990})   in our setting to prove attraction properties of one colour over the other.  A review of the existing results and applications of urn models is given in  Section~\ref{literaturereview}.

\subsection{Literature review}\label{literaturereview}
Reinforced random walks were introduced by D. Coppersmith  and P. Diaconis   in an unpublished manuscript (\cite {CoDia}). The case of a single particle which moves to nearest neighbors vertices of a graph is considered. The probability to traverse a given edge incidental to the actual position is proportional to the weight of that edge, which in turn evolves in time as follows. Each time is traversed the edge's weight is increased by a fixed  constant $\delta>0$. This is the linear case.
A more general case was studied by B. Davis \cite{BD1990}. He studied the case where the probability to pass an edge is proportional to a reinforcement function $f$ of the times that edge has been traversed in the past. 
T. Sellke (see \cite{S94})  conjectured that if the reinforcement function $f(i)$  is strong, i.e. satisfies $\sum_j 1/(f(j))<\infty$, then RRW on the triangle visits exactly two adjacent vertices  infinitely often;  that is oscillates on one edge at all large times. V. Limic proved the result when $f(i) = i^\alpha$ for  $\alpha>1$. Limic and P. Tarr\'es (see \cite{LimT08})  extended this result on general graphs for a large class of reinforcement functions, including all the monotone increasing functions. C. Cotar and D. Thacker have recently announced (see \cite{D2015}) a complete solution of the problem on any bounded degree graph. They also include in their paper complete results for the superlinear vertex reinforced random walk.
As for the latter properties, it is curious that it shows localization even in the linear case, where each vertex is reinforced by one each time it is visited. It was proved by T\'arres (see \cite{Ta04}) that when defined on $\Z$  it localizes on exactly 5 consecutive  vertices.   See also \cite{Vo01} for the study of this process on general graph.
For a survey on reinforcement, including discussion on linear and sublinear reinforced random walk see \cite{Pem} and \cite{Koz}.
The results listed above only describe the behaviour of a single particle. In \cite{Kov08} it is  considered a system of two particles with linear reinforcement, defined on $\Z$. In that paper Y. Kovchegov proved that the two particles meet infinitely often. Yilei Hu studied the case with $K \ge 2$ particles in his Phd thesis (see \cite{Hu10}).

  To the best of our knowledge, the result contained in our paper  is the first concerning a  system of  particles which interact through strong reinforcement.
   We also emphasize the fact that our results on urn and  system of reinforced particles go beyond the simple composition of the urn or the number of times an edge has been visited. It actually covers more sophisticated transition probabilities.

\subsection{Brief description of the proof of Theorem~\ref{rubin}}
We reason by contradiction.
 We suppose that both   Assumptions I and  II hold.
From now on $m_{n}$ is used to denote a sequence which satisfies Assumption II. Later, we will work  with  subsequences of this sequence.   We embed infinitely many urns using the same Brownian motion, sequentially.
Each embedding can be  briefly described as follows. Suppose we defined $M_k$, then the composition of the urn at time $k+1$, i.e. $\vec{\phi}(k+1)$ is determined by the Brownian motion. We then use an external randomization to get $M_{k+1}$ using the fact that given $M_k$ and  $\vec{\phi}(k+1)$ the support of $M_{k+1}$ is countable and conditionally independent of the past $M_j$ with $j< k$.
Recall that we denoted by $\mathbf{M}^{\ssup n}= \{ M^{\ssup n}_k, k \ge 0\}$ the Markov processes associated to the $n$-th urn. The $n$-th urn has initial condition $M^{\ssup n}_0= m_n$. Time $S_n$, properly defined later,  will denote the time when the first $n$ urns are embedded.  We prove that $S_n<\infty$ a.s.. In fact, this stopping time is smaller than the hitting time of  a the set  $\{-C_n, C_n\}$ for some finite $C_n$. Hence the process $ W_{S_n}$, with $n \in \N$,  is a martingale.  We prove that there exists a random time $N$ such that $W_{S_n} - W_{S_{n-1}} \le 0$ for all $n \ge N$.  We also prove that under  Assumptions I and II the stopping time \underline{$S_\infty = \lim_n S_n$ has infinite first moment.}
 At this point we distinguish two cases.
 \begin{itemize}
 \item If $W_{S_\infty} >-\infty$, using the Kolmogorov three series Theorem and Burkholder-Davis-Gundy (BDG) inequality, we prove that $\mathbb{E}[S_\infty]<\infty$ which yields a contradiction \andre{with the above statement}.
 \item $W_{S_\infty} =-\infty$ then we use BDG inequality to argue that the martingale $W_{S_n}$, with $n \in \N$,  is bounded in $L^1$. This contradicts, using the Doob martingale convergence Theorem, its convergence to $-\infty$.
\end{itemize}

Throughout the embedding,   we make use of the following fact.
\begin{remark}\label{GUP}
 For any  function $f$, and any positive constant $\lambda$, denote by $\lambda f$  the function which maps $j \in \N \mapsto \lambda f(j)$. Fix $m \in \S$.  It is immediate to realize  that GUP(${ \bf M}, M_{0}= m,  \lambda f_1, \lambda f_2$), where $\lambda$ is a positive constant, has the same distribution as GUP(${ \bf M}, M_{0}= m, f_1, f_2$). In fact, the transition probabilities described in \eqref{prob} remain unchanged.
\end{remark}


 \subsection{Embedding of the GUP into Brownian motion}
Let $(m_n, n\in \N)$ be a sequence in $\S$  satisfying the conditions in Assumption II.
We embed GUP(${ \bf M}, M_{0}= m_1, f_1, f_2$) into Brownian motion.  Recall that we denote the reinforcement processes for this urn using $f_i(k) =f_i(k,1)$, with $i \in \{1, 2\}$. To simplify the notation, we drop the index 1 in the remaining part of this subsection.
  Fix $m_1 \in \S$. Set $\vec{\phi}(0) = \vec{\phi}(0, 1) = (\phi_1(0), \phi_2(0))$, i.e. the initial configuration of the urn, which is determined by $M_0 =\andre{m_1}$. Recall that the reinforcement functions at time 1, i.e., $f_1(1)$ and $f_2(1)$ are also determined by $M_0 = \andre{m_1}$. Set $e_1 = (1, 0)$ and $e_2 = (0, 1)$. Recall the definition of the sequences $a(k) = a_1(k)$ and $b(k) = b_1(k)$, with $k \in \N$, from Assumption II.
   Let the process $\mathbf{W} := \{W_t, \, t\ge 0\}$ be a standard Brownian motion,  which  starts at $0$. Denote by  $\{\Fcal_t \colon t \ge 0\}$  the natural filtration of this Brownian motion.  We  use this process to generate GUP(${\bf M}, M_{0} = \andre{m_1},$  $f_1, f_2$).  
   Set $T_{0}(1) =0$, a.s.. Let
\begin{equation}\label{ennezero}
 T_{1}(1) \Def \inf \Big\{ t \ge 0: \; W_t \mbox{ hits either } 1/f_{a(1)}(1) \mbox{ or } - 1/f_{b(1)}(1)  \Big\}.
\end{equation}
If $ W_{T_{1}(1)} - W_{T_{0}(1)} =  1/f_{a(1)}(1) $ then set $\vec{\phi}(1) = e_{a(1)} + \vec{\phi}(0)$, otherwise set $\vec{\phi}(1) = e_{b(1)} + \vec{\phi}(0)$.
Given $\vec{\phi}(1) = d$, with  $d\in \N^2$, we define $M_1$ by  generating a  random variable  $R_1$   independent of $\Fcal_{T_1(1)}$, with distribution
 $$\bbP_{\andre{m_1}}(M_{1} \in \cdot \;|\;  \vec{\phi}(1) = d),$$
 and setting $M_1= R_1$.
Suppose we defined $T_{k}(1)$ and $M_{k}$. Set $\phi_s(k)$ to be the $s$-th  coordinate of $\vec{\phi}(k)$, with $s \in \{1, 2\}$.
Set
$$ T_{k+1}(1) = \inf \left\{  t \ge T_{k}(1): \:  W_t - W_{T_{k}(1)} \mbox{ hits either }  \frac{1}{f_{a(k+1)}(k+1)} \mbox{ or }  -\frac{1}{f_{b(k+1)}(k+1) } \right\}.$$
If $ W_{T_{k+1}(1)} - W_{T_{k}(1)} =  1/f_{a(k+1)}(k+1) $ then set $\vec{\phi}(k+1) = \vec{\phi}(k)  +e_{a(k+1)} $, otherwise set $\vec{\phi}(k+1) = \vec{\phi}(k) +  e_{b(k+1)}$.
Next, given the events $\{M_k = c\} \cap \{\vec{\phi}(k+1) = d\}$, with $c \in \S, d \in \N^2$,  we define $M_{k+1}$ by generating a random variable  $R_{k+1}$, independent from  both $\Fcal_{T_{k+1}(1)}$ and $\sigma(R_1, R_2, \ldots, R_k)$,
and  with distribution
$$\bbP_{\andre{m_1}}(M_{k+1} \in \cdot \;|\; M_{k}=c, \vec{\phi}(k+1)=d),$$
and setting $M_{k+1}= R_{k+1}$.
We assume that Brownian motion and the random variables $R_k$, $k \in \N$, are defined  on  the same probability space $(\Omega, \Fcal, \bbP)$.

\noindent By the ruin problem for Brownian motion, we have that
 \[
 \begin{aligned}
 \bbP\left( \vec{\phi}(k+1) = \vec{\phi}(k) +e_{1}  \;|\;  M_{k} \right) &= \frac{  1/f_2(k+1)}{ (1/f_1(k+1)) + (1/f_2(k+1))}\\
  &= \frac{f_1(k+1)}{f_1(k+1) + f_2(k+1)}, \qquad \qquad \bbP-\mbox{a.s.,}
  \end{aligned}
  \]
 which is exactly the urn transition probability.
 \begin{remark}It is convenient for us to embed the process $\vec{\phi}(k)$ into the Brownian motion, because we use the well known results regarding this process to get a contradiction.
 \end{remark}


Define
\begin{equation}\label{enne}
T_{\infty}(1) \Def \lim_{k \to \infty} T_k(1).
\end{equation}
This limit exists because the sequence of stopping times $\{T_{k}(1)\}$ is increasing. For this reason $T_{\infty}(1)$ is itself a stopping time, possibly infinite.

\subsection{Proof of Theorem \ref{rubin} in the case $\langle B_{1}(m_n)\rangle \vee \langle B_{2}(m_n)\rangle <\infty$, for infinitely many $ n \in \N$.}
We reason by contradiction and assume that  both Assumptions I and II hold. We further assume in this part of the proof that $\langle B_{1}(m_n)\rangle \vee \langle B_{2}(m_n)\rangle <\infty$ for  infinitely many  $n \in \N$. By taking a subsequence, we assume that  $\langle B_{1}(m_n)\rangle \vee \langle B_{2}(m_n)\rangle <\infty$ holds for  all $n \in \N$.
In virtue of Proposition~\ref{ref01}, we have that
 $\lim_{n\ti}\alpha(m_n)  =1$.
Recall the definition of  $\eta_n $ from Section~\ref{MAIN}. Set
\begin{equation}\label{cienne}
  c_n = \sqrt{  \frac{{2}}{\eta_{n}} (\langle B_{1}(m_n)\rangle \vee \langle B_{2}(m_n)\rangle)}.
  \end{equation}
  The sequence $c_n$ is deterministic, this will play a major role in what follows.
Notice that we are going to repeat the procedure of taking subsequences few more times in the remaining part of the proof.
 Consider the infinite sequence of independent copies of ${\bf M}$, say ${\bf M}^{
 \ssup 1}, {\bf M}^{\ssup 2} \ldots$ with starting points $m_1, m_2, \ldots$.
Set $S_{1} = T_{\infty}(1)$. Next, we use the process  $\{W_{S_{1} + t} - W_{S_{1}}, t \ge 0\}$ to embed GUP(${\bf M}^{\ssup 1}$, $ M_{0}^{\ssup 1} = m_{2}$,  $ c_{2} f_{1}(\cdot, 2)$,   $c_{2} f_{2}(\cdot, 2)$) defining the hitting times $T_{0}(2) = S_{1}$  and $ T_{i}(2)$ to be times when the $i$-th ball  is generated (similarly to what we have \andre{described} above) and let $S_{2}$ to be its ending stopping time, i.e. $S_{2} =  T_{\infty}(2)$.
 We repeat this procedure, to  define  the stopping times $ S_{3}, \ldots S_{k} \ldots$, each time using a new reinforcement scheme, as follows. For $k \ge 2$. we have that time $S_{k}$ is the ending time of the embedding of GUP(${\bf M}^{\ssup 1}$, $M^{\ssup 1}_{0} = m_{1},  f_{1}(\cdot, 1)$, $ f_{2}(\cdot, 1)$) and all the GUP(${\bf M}^{\ssup n}$, $M^{\ssup n}_{0} = m_{n},  c_{n}f_{1}(\cdot, n)$,   $c_{n} f_{2}(\cdot, n)$), with  $2 \le n \le k$.

\begin{remark}
The GUPs embedded are independent of each other.  As before, we assume that the Brownian motion and the variables used in the embedding are defined on the same probability space $(\Omega, \Fcal, \bbP)$. Finally, notice that by the fact that $(c_n, n \in \N)$ is a deterministic sequence, combined with  remark~\ref{GUP}, we have that GUP(${\bf M}^{\ssup n}$, $M^{\ssup n}_{0} = m_{n},  c_{n}f_{1}(\cdot, n), c_{n} f_{2}(\cdot, n))$ has same distribution as GUP(${\bf M}^{\ssup n}$, $M^{\ssup n}_{0} = m_{n},  f_{1}(\cdot, n),  f_{2}(\cdot, n))$. We use $c_n$ because it changes the time needed to embed. In this way the total time  needed to embed all the GUP's has infinite expectation, as we prove below.
\end{remark}

\begin{definition}
Set $A_\infty(n)$ to be the event that infinitely many balls of each colour are picked in the GUP(${\bf M}^{\ssup n}, M^{\ssup n}_0 = m_n, c_{n}f_{1}(\cdot, n), c_{n}f_{2}(\cdot, n)$).
\end{definition}

%
 \begin{proposition}\label{sign} There exists an a.s.  finite random time $N_1$ such that for all $n \ge N_1$, we have
 $$ \Big(W_{S_n} - W_{S_{n-1}}\Big)\1_{A_\infty(n)} \le 0, \qquad \mbox{eventually } \bbP-\mbox{a.s..}$$
 \end{proposition}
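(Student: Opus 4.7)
The plan is to write $W_{S_n} - W_{S_{n-1}}$ as an explicit sum of Brownian increments dictated by the embedding of the $n$-th urn, match that sum term-by-term with the expression appearing in inequality~\eqref{ainp} of Assumption~II~(iii), and then invoke that assumption directly.

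First I would telescope
\[
W_{S_n} - W_{S_{n-1}} \Eq \sum_{j \ge 1}\bigl(W_{T_j(n)} - W_{T_{j-1}(n)}\bigr).
\]
By construction of the embedding, each summand is a Brownian exit from a symmetric interval whose endpoints depend on $c_n f_{a_n(\cdot)}$ and $c_n f_{b_n(\cdot)}$: it equals $+1/(c_n f_{a_n(j)}(j,n))$ exactly when colour $a_n(j)$ is picked at step $j$ of the $n$-th urn and $-1/(c_n f_{b_n(j)}(j,n))$ exactly when colour $b_n(j)$ is picked (for $n=1$ take $c_1:=1$). Sorting the summands according to the colour picked and using the definitions of $U^{\ssup n}_a$ and $U^{\ssup n}_b$ produces the identity
\[
W_{S_n} - W_{S_{n-1}} \Eq \frac{1}{c_n}\left(\sum_{i \in U^{\ssup n}_a}\frac{1}{f_{a_n(i)}(i,n)} \;-\; \sum_{j \in U^{\ssup n}_b}\frac{1}{f_{b_n(j)}(j,n)}\right),
\]
which holds $\bbP$-a.s. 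On $A_\infty(n)$ both index sets are infinite, but the identity still makes sense there since the partial sums coincide with $W_{T_k(n)} - W_{T_0(n)}$ and therefore converge to $W_{S_n}-W_{S_{n-1}}$ as $k\to\infty$ by the definition of $S_n$.

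At this point I would simply apply inequality~\eqref{ainp}: it provides an a.s.\ finite random time $N_1$ such that the parenthesized expression on the right is $\le 0$ on $A_\infty(n)$ for every $n\ge N_1$. Since $c_n>0$, multiplying by $1/c_n$ preserves the sign and gives $(W_{S_n}-W_{S_{n-1}})\1_{A_\infty(n)}\le 0$ for all $n\ge N_1$, which is the claim. Note that the scaling factor $c_n$ does not alter the joint law of the $n$-th urn process (Remark~\ref{GUP}), so the event $A_\infty(n)$ and the sequences $U^{\ssup n}_a, U^{\ssup n}_b$ carry exactly the meanings required by Assumption~II~(iii).

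The argument is almost entirely bookkeeping: the sign of each Brownian step-increment is forced by the embedding construction in the previous subsection, and the rest is re-indexing. I do not anticipate a real obstacle here; the only care needed is to match the indexing convention in the definitions of $U^{\ssup n}_a$ and $U^{\ssup n}_b$ with the labelling of the embedding steps $T_j(n)$, but this is mechanical and does not affect the final identity.
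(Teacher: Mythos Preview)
Your proposal is correct and follows essentially the same route as the paper: telescope $W_{S_n}-W_{S_{n-1}}$ into the Brownian step-increments of the $n$-th embedding, split the sum over $U^{\ssup n}_a$ and $U^{\ssup n}_b$, identify the resulting expression with $c_n^{-1}$ times the quantity in \eqref{ainp}, and invoke Assumption~II~(iii). Your version is slightly more careful than the paper's (you explain why the infinite sum makes sense on $A_\infty(n)$ and note the $c_1=1$ convention), but the argument is the same.
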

 \begin{proof}
 Recall the definitions of $U^{\ssup n}_a$ and $U^{\ssup n}_b$ from the introduction.

As $U^{\ssup n}_a$ and $U^{\ssup n}_b$ form a partition of $\N$, we have that
$$
\begin{aligned}
(W_{S_n} - W_{S_{n-1}})\1_{A_\infty(n)}  &= \1_{A_\infty(n)} \sum_{j=1}^\infty (W_{T_j(n)} - W_{T_{j-1}(n)})\\
 &=  \left(\sum_{w \in U^{\ssup n}_a} (W_{T_{w+1}(n)} - W_{T_{w}(n)})   + \sum_{r \in U^{\ssup n}_b} (W_{T_{r+1}(n)} - W_{T_{r}(n)})\right) \1_{A_\infty(n)}\\
 &=  \frac{1}{c_n} \left(\sum_{w \in U^{\ssup n}_a} \frac 1{f_{a_n(w)}(w, n)} -  \sum_{r \in U^{\ssup n}_b} \frac 1{f_{b_n(r)}(r, n)}\right)\1_{A_\infty(n)} \le 0, \qquad \qquad
 \end{aligned}
$$
 eventually, $\bbP-\mbox{a.s..}$ The last inequality  \andre{is due to  } Assumption II.
 \hfill
 \end{proof}

The next Proposition is a well known result from Brownian motion. For completeness, we include its proof in the Appendix of this paper. Denote by $H_{u}$ the hitting time of the point $u$ by the one dimensional Brownian motion.

\begin{proposition}\label{bmht} Denote by $\bbP^{x}$ the probability measure associated to  a  Brownian motion which starts from $x$. Denote by $\bbE^{x}$ the corresponding expectation.  For $0< x < a$,  we have
\begin{eqnarray}
\label{noc}  \mathbb{E}^{x} \Big[ H_{a}   \mid H_{a} < H_{0}\Big] &=& \frac 13 (a^{2} - x^{2}),\\
\label{noc2}  \mathbb{E}^{x} \Big[ H_{a} ^{2}  \mid H_{a} < H_{0}\Big] &=& \frac 1{45} \Big( 7 a^4 - 10 a^2x^2 + 3 x^4\Big)
\end{eqnarray}
\end{proposition}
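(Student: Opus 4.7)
The plan is to reduce both identities to Dirichlet boundary-value problems for the generator $\frac{1}{2}\partial_{xx}$ of Brownian motion on $[0,a]$. Throughout set $H := H_0\wedge H_a$, which has all moments finite under $\bbP^x$ for $x \in [0,a]$ by standard exit-time estimates, so Dynkin's formula applies without concern. The baseline fact, obtained from optional stopping applied to the martingale $W_t$, is $\bbP^x(H_a<H_0) = \bbP^x(B_H = a) = x/a$.

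For \eqref{noc}, I would rewrite $H\,\mathbf{1}_{\{B_H=a\}} = \int_0^H \mathbf{1}_{\{B_H=a\}}\,ds$ and use the strong Markov property: on $\{s<H\}$, $\bbE^x[\mathbf{1}_{\{B_H=a\}}\mid \Fcal_s] = B_s/a$. Therefore
$$
v(x) := \bbE^x\bigl[H\,\mathbf{1}_{\{B_H=a\}}\bigr] = \bbE^x\!\left[\int_0^H \frac{B_s}{a}\,ds\right],
$$
so by Dynkin's formula $v$ is the unique solution of $\tfrac{1}{2}v''(x)=-x/a$ on $(0,a)$ with $v(0)=v(a)=0$. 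Integrating twice yields $v(x)=x(a^2-x^2)/(3a)$, and dividing by $\bbP^x(B_H=a)=x/a$ gives the first claim.

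For \eqref{noc2}, use the identity $H^2\,\mathbf{1}_{\{B_H=a\}} = 2\int_0^H (H-s)\mathbf{1}_{\{B_H=a\}}\,ds$ and condition on $\Fcal_s$ to get $\bbE^x[(H-s)\mathbf{1}_{\{B_H=a\}}\mid \Fcal_s] = v(B_s)$ on $\{s<H\}$. Hence
$$
w(x) := \bbE^x\bigl[H^2\,\mathbf{1}_{\{B_H=a\}}\bigr] = \bbE^x\!\left[\int_0^H 2v(B_s)\,ds\right]
$$
solves $\tfrac{1}{2}w''(x) = -2v(x)$ with $w(0)=w(a)=0$. Substituting $v(x) = (a^2 x - x^3)/(3a)$ gives $w''(x) = -4ax/3 + 4x^3/(3a)$; two integrations together with the boundary conditions yield $w(x) = \frac{x}{45a}\bigl(7a^4 - 10 a^2 x^2 + 3 x^4\bigr)$. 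Dividing by $x/a$ produces the second identity.

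There is no real obstacle; the only point worth checking carefully is the integrability needed to convert the Dynkin identity into the stated ODE, which is immediate since $H$ has exponential moments on $[0,a]$. An alternative route that I would mention as a sanity check is to compute the Laplace transform $\phi(x,\lambda)=\bbE^x[e^{-\lambda H}\mathbf{1}_{\{B_H=a\}}]=\sinh(x\sqrt{2\lambda})/\sinh(a\sqrt{2\lambda})$ and extract the first two coefficients of its Taylor expansion in $\lambda$; this reproduces the same formulas and confirms the constants $1/3$ and $1/45$.
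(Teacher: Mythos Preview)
Your proof is correct. It differs from the paper's argument: the paper takes as its starting point the closed-form Laplace transform
\[
\bbE^{x}\bigl[e^{-\theta H_a}\mathbf{1}_{\{H_a<H_0\}}\bigr]=\frac{\sinh(x\sqrt{2\theta})}{\sinh(a\sqrt{2\theta})}
\]
(quoted from Karatzas--Shreve), differentiates once and twice in $\theta$, and extracts the limits $\theta\to 0$ via L'H\^opital --- exactly the route you mention only as a sanity check. Your main argument instead builds the functions $v$ and $w$ directly from the strong Markov property and Dynkin's formula, reducing each moment to a two-point boundary value problem for $\tfrac12\partial_{xx}$; this is self-contained and avoids importing the $\sinh$ identity, at the cost of carrying out the ODE integrations by hand. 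Both approaches are standard; yours is arguably cleaner because it sidesteps the somewhat delicate $\theta\to 0$ asymptotics of the hyperbolic expressions, while the paper's has the advantage that all moments are in principle encoded in a single known formula.
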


\begin{remark} It is convenient, for us, to rewrite the right-hand side of equation \eqref{noc} as
\begin{equation}\label{noc3}
L_{1}(a-x, x) \Def \frac 13(a-x)^{2} + \frac 23 (a-x)x,
\end{equation}
and equation \eqref{noc2} as follows
\begin{equation}\label{noc4}
L_{2}(a-x, x) \Def \frac 1{45} \Big(7 (a-x)^4 + 28 (a-x)^3x + 32 (a-x)^2x^2 + 8 (a-x)x^3\big).
\end{equation}
\end{remark}
Notice that if both $a-x$ and $x$ are non-negative then
\begin{equation}\label{elin}
  L_{1}(a-x, x) \ge \frac 13(a-x)^{2}.
  \end{equation}
Recall that  $\vec{\phi} \andre{(k, n)}$  denotes the composition at time  $k$ in the GUP($ {\bf M}^{\ssup n}$, $M^{\ssup n}_{0} = m_{n}$,   $c_{n}f_{1}$,   $c_{n} f_{2}$) embedded into Brownian motion. Notice that \lq time\rq $\;k$ for the GUP($ {\bf M}^{\ssup n}$, $M^{\ssup n}_{n} = m_{n}$,  $c_{n}f_{1}(\cdot, n)$,   $c_{n} f_{2}(\cdot, n)$)  translates into time $T_{k}(n)$ for the Brownian motion.

\begin{definition}\label{dfab}  Let
$ e(s) = e(s,n) \in \{1,2\}$  be the index $i$ such that $\phi_{i}(s+1, n) > \phi_{i}(s, n)$. Let $v(s) = v(s,n) = \{1,2\} \setminus e(s,n).$
We define $ u_i(s) = u_i(s,n)$ to be the  $s$-th element of ${\rm UP}(i,n)$, which was defined in \eqref{updef}.
More precisely, let   $u_i(1) = u_i(1,n) \Def \inf\{ k \ge 0 \colon \phi_i(k+1, n) > \phi_i(k, n)\}$, and \andre{ let
$$ u_i(s) = u(s,n)\Def \inf\{ k  > u_i(s-1, n) \colon \phi_i(k+1, n) > \phi_i(k, n)\}.$$
Define $\Ccal_{n} = \sigma\{S_{n-1},  M^{\ssup {n}}_{k} \colon  k \ge 0 \}$. }
\end{definition}

\begin{lemma}\label{indoft} Denote by $\bbE$ the expected value associated to the measure $\bbP$. For $p \in \{1, 2\}$, we have that for all $s \in \N$,
\begin{equation}
 \begin{aligned}
 \label{ftpas} \bbE[(T_{s+1}(n) - &T_{s}(n))^{p} \; |\;\Ccal_{n}] = L_{p}\left(\frac{1}{c_{n}f_{e(s)}(s, n)}, \frac{1}{c_{n}f_{v(s)}(s, n)}\right).
 \end{aligned}
 \end{equation}
\end{lemma}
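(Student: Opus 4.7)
The plan is to reduce the computation of $\bbE[(T_{s+1}(n)-T_s(n))^p\mid\Ccal_n]$ to a single one-dimensional Brownian exit problem via the strong Markov property at the stopping time $T_s(n)$, and then read off the result from Proposition~\ref{bmht}. The key observation that makes this clean is that conditioning on $\Ccal_n$ supplies exactly two pieces of information about the Brownian excursion on $[T_s(n),T_{s+1}(n)]$ that are relevant to that proposition, and no more: the two boundary levels, and the identity of the boundary that is reached first.

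More precisely, I would first unpack $\Ccal_n=\sigma\{S_{n-1},M^{\ssup n}_k:k\ge 0\}$. Through the maps $G_1,G_2$ the Markov trajectory $(M^{\ssup n}_k)$ determines all of the reinforcements $f_i(\cdot,n)$ (and hence the hitting levels $\pm 1/(c_n f_{a(s+1)}(\cdot,n))$ and $\mp 1/(c_n f_{b(s+1)}(\cdot,n))$ used in the embedding) and all of the urn compositions $\vec\phi(\cdot,n)$. From the sign of $\vec\phi(s+1,n)-\vec\phi(s,n)$ one reads off which boundary was reached first, and thus the $\Ccal_n$-measurable indices $e(s),v(s)$. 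No further information about the excursion is carried by $\Ccal_n$: the auxiliary $R_k$'s are drawn independently of $W$ by construction, and any $M^{\ssup n}_j$ with $j>s+1$ depends on $W$ only through strictly later excursions.

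Next I would apply the strong Markov property at $T_s(n)$ in the filtration enlarged by the $R_k$'s, in which $T_s(n)$ is still a stopping time and which is compatible with the strong Markov property of $W$. The conditional law of $T_{s+1}(n)-T_s(n)$ given $\Ccal_n$ therefore coincides with that of the exit time of a fresh standard Brownian motion $\widetilde W$ from the interval whose endpoints are the two known boundary levels, conditioned on which endpoint is hit first. Translating so that the lower endpoint sits at $0$, and writing $A=1/(c_nf_{e(s)}(\cdot,n))$ and $B=1/(c_nf_{v(s)}(\cdot,n))$ for the distances from the starting point to the hit and non-hit boundaries, Proposition~\ref{bmht} with $x=B$ and $a=A+B$ gives the conditional first moment as $\frac13((A+B)^2-B^2)=L_1(A,B)$ via \eqref{noc3}, and similarly the conditional second moment as $L_2(A,B)$ via \eqref{noc4}. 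By reflection symmetry, the case where the lower boundary is reached first yields the same expression with the roles swapped, and since by definition $e(s)$ always indexes the hit boundary and $v(s)$ the other, this is exactly the right-hand side of \eqref{ftpas}.

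The only genuinely subtle point in carrying this out is the filtration bookkeeping: one has to be careful because the overall process interleaves Brownian hitting times with externally drawn $R_k$'s, so one must check that $T_s(n)$ remains a stopping time in the enlarged filtration and that the strong Markov property still applies there. This holds because each $R_k$ is generated independently of the post-$T_k(n)$ Brownian motion. Once that is articulated, the rest is a direct substitution into \eqref{noc3}--\eqref{noc4}.
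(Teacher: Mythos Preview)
Your approach is correct and matches the paper's: both reduce the conditional expectation to the moments of a single Brownian exit time by arguing that, given the information in $\Ccal_n$, the excursion $(W_{T_s(n)+t}-W_{T_s(n)})_{t\ge 0}$ up to $T_{s+1}(n)$ is conditionally a Brownian motion exiting an interval whose endpoints and exit side are $\Ccal_n$-measurable, and then invoke Proposition~\ref{bmht}. The paper frames the independence by conditioning on the pair $(M^{\ssup n}_s,M^{\ssup n}_{s+1})$ and observing that the excursion is then independent of $\Fcal_{T_s(n)}$, of the post-$T_{s+1}(n)$ Brownian increments, and hence of all other $M^{\ssup n}_k$; your filtration-enlargement phrasing with the strong Markov property at $T_s(n)$ is an equivalent way to package the same argument.
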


\begin{proof}
Given  $(M^{\ssup n}_{s}, M^{\ssup n}_{s+1})$ the variable $(W_{T_{s+1}(n)} - W_{T_{s}(n)}, T_{s+1}(n) - T_{s}(n))$ is independent of
\begin{itemize}
\item[i)] $\Fcal_{T_s(n)},$ where recall that  $(\Fcal_t)$ is the  natural filtration of Brownian motion $\mathbf{W}$;
\item [ii)]   $W_{T_{s+1}(n)+t} - W_{T_{s+1}(n)}$, with $t \ge 0$, \andre{as  this process is a Brownian motion independent of $\Fcal_{T_{s+1}(n)}$;}
\item [iii)] $(M_{k}^{\ssup n}\colon k \in \N,  k \notin\{s, s+1\})$.  \andre{This is a consequence of points i) and ii) above.}
\end{itemize}
This together with \eqref{noc}, \eqref{noc2}  and the structure of our embedding scheme proves  \eqref{ftpas}.
\hfill
\end{proof}

\begin{remark}\label{onest}  Using \eqref{elin},
we have
 \begin{equation}\label{bain}
     L_{1}\left(\frac{1}{c_{n}f_{e(s)}(s, n)}, \frac{1}{c_{n}f_{v(s)}(s, n)}\right) \ge \frac 13 \frac{1}{c_{n}^{2}f_{e(s)}^{2}(s, n)}
   \end{equation}
   \end{remark}
\begin{lemma}\label{imin}  We have
$$
\mathbb{E}[S_{n}-S_{n-1}\;|\; \Ccal_{n}] \ge \frac 13 c_{n}^{-2}  \left(\langle \Tcal_1(m_n) \rangle \vee \langle \Tcal_2(m_n) \rangle\right)
$$
\end{lemma}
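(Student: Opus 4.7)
The plan is to realize $S_n - S_{n-1}$ as the telescoping sum
\[
S_n - S_{n-1} \;=\; T_\infty(n) - T_0(n) \;=\; \sum_{s=0}^{\infty}\bigl(T_{s+1}(n) - T_s(n)\bigr),
\]
take the conditional expectation given $\Ccal_n$, and interchange sum and expectation by the monotone convergence theorem (all summands are nonnegative).

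Next I apply Lemma \ref{indoft} with $p=1$ term-by-term, and then invoke Remark \ref{onest} (which is just the instance \eqref{elin} of the identity \eqref{noc3}) to lower bound each conditional expectation:
\[
\bbE\bigl[T_{s+1}(n) - T_s(n)\,\big|\,\Ccal_n\bigr]
\;=\; L_1\!\left(\tfrac{1}{c_n f_{e(s)}(s,n)},\tfrac{1}{c_n f_{v(s)}(s,n)}\right)
\;\ge\; \frac{1}{3\,c_n^{2}\,f_{e(s)}^{2}(s,n)}.
\]
Note $e(s,n)$ is $\Ccal_n$-measurable since $\vec{\phi}(s+1,n)$ is a function of $M^{\ssup n}_{s+1}$, so each bound makes sense conditionally on $\Ccal_n$.

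Summing over $s\ge 0$ and splitting according to which colour is picked, I use the fact that $\{s\ge 0 : e(s,n)=i\} = {\rm UP}(i,n)$ (by Definition \ref{dfab} and \eqref{updef}) to obtain
\[
\sum_{s=0}^{\infty}\frac{1}{f_{e(s)}^{2}(s,n)}
\;=\; \sum_{i\in\{1,2\}}\sum_{s\in {\rm UP}(i,n)}\frac{1}{f_i^{2}(s,n)}
\;=\; \langle \Tcal_1(n)\rangle + \langle \Tcal_2(n)\rangle,
\]
by the definition of $\Tcal_i(n)$ and of $\langle\,\cdot\,\rangle$. Since both terms are nonnegative, $\langle \Tcal_1(n)\rangle + \langle \Tcal_2(n)\rangle \ge \langle \Tcal_1(n)\rangle \vee \langle \Tcal_2(n)\rangle$, and combining this with the previous display yields the claimed inequality.

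There is no serious obstacle: the argument is a direct bookkeeping consequence of Lemma \ref{indoft} and Remark \ref{onest}. The only point that requires a moment of care is the measurability observation that justifies applying the conditional version of Lemma \ref{indoft} pointwise in $s$, and the reindexing of the sum via the partition $\N_0 = {\rm UP}(1,n)\cup {\rm UP}(2,n)$ so that the quantities $\langle \Tcal_i(n)\rangle$ emerge naturally.
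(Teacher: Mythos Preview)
Your proof is correct and follows essentially the same route as the paper: telescope $S_n-S_{n-1}$ into the increments $T_{s+1}(n)-T_s(n)$, apply Lemma~\ref{indoft} with $p=1$ together with the lower bound of Remark~\ref{onest}, and then reindex the sum over the partition ${\rm UP}(1,n)\cup{\rm UP}(2,n)$ to produce $\langle\Tcal_1(m_n)\rangle+\langle\Tcal_2(m_n)\rangle\ge\langle\Tcal_1(m_n)\rangle\vee\langle\Tcal_2(m_n)\rangle$. The only differences are cosmetic (your indexing starts at $s=0$, and you spell out the monotone-convergence and measurability justifications a bit more explicitly).
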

\begin{proof} Using  \eqref{noc}, the fact that
$$ S_{n}-S_{n-1} = \sum_{s=1}^\infty \big(T_{s}(n) -T_{s-1}(n)\big),$$
  and lemma \ref{indoft}, we have
\begin{equation}\label{frka1}
\begin{aligned}
\mathbb{E}[S_{n}-S_{n-1}\;|\; \Ccal_{n}]
&=  \sum_{s=1}^{\infty} \mathbb{E}[T_{s}(n)-T_{s-1}(n)\;|\;\Ccal_n]\ge  \frac 13 c_{n}^{-2} \langle \Tcal_1(m_n) \rangle +  \frac 13 c_{n}^{-2} \langle  \Tcal_2(m_n) \rangle, \; \; \bbP-\mbox{a.s.},
\end{aligned}
\end{equation}
where in the last inequality we used \eqref{bain} combined with
$$ \sum_{s=1}^\infty \frac{1}{f_{e(s)}^{2}(s, n)} =
\langle \Tcal_1(m_n) \rangle +  \langle  \Tcal_2(m_n) \rangle.$$
\hfill
\end{proof}

\begin{definition}
Let
$ N_2 = \inf\{ k \in \N \colon A_{\infty}(n) \mbox{ holds for all $n \ge k$} \}.$  By Assumption II i), combined with Borel-Cantelli Lemma, $N_1< \infty$, a.s.. Define $N = N_1 \vee N_2,$ where  $N_1$ was defined in  the statement of Proposition~\ref{sign}.
By taking a subsequence of $(m_n)$ we may and do  assume that $\bbP(N \ge n)< {\rm e}^{-n}.$
\end{definition}

\begin{proposition}\label{bbb}
If Assumptions I and II hold then
$ \bbE[S_\infty] = \infty.$
\end{proposition}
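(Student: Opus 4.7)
The plan is to express $\mathbb{E}[S_\infty]$ as the series $\sum_{n\ge 1} \mathbb{E}[S_n - S_{n-1}]$ and show this series diverges via the pointwise lower bound from Lemma~\ref{imin}. Because $(S_n)$ is increasing, $S_\infty = \sum_{n\ge 1}(S_n - S_{n-1})$ is a sum of non-negative terms, so Tonelli's theorem immediately gives $\mathbb{E}[S_\infty] = \sum_{n\ge 1}\mathbb{E}[S_n - S_{n-1}]$, and the task reduces to showing divergence of this series.

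Next I would take unconditional expectations of the bound in Lemma~\ref{imin}; combined with the definition $c_n^{-2}=\eta_n/\bigl(2(\langle B_1(m_n)\rangle\vee\langle B_2(m_n)\rangle)\bigr)$ from \eqref{cienne}, this yields
\begin{equation*}
\mathbb{E}[S_n - S_{n-1}] \;\ge\; \tfrac{1}{6}\, \mathbb{E}\!\left[ \frac{\eta_n\bigl(\langle\mathcal{T}_1(m_n)\rangle\vee\langle\mathcal{T}_2(m_n)\rangle\bigr)}{\langle B_1(m_n)\rangle\vee\langle B_2(m_n)\rangle} \right].
\end{equation*}
Writing $X_n$ for the integrand, Fatou's lemma gives $\liminf_n \mathbb{E}[X_n] \ge \mathbb{E}[\liminf_n X_n]$. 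In the second subcase of Assumption~II~ii), the inner $\liminf$ is $\bigotimes_n \mathbb{P}_{m_n}$-a.s.\ strictly positive by hypothesis, so $\mathbb{E}[\liminf_n X_n]>0$; hence $\mathbb{E}[X_n]$ exceeds a fixed positive constant for infinitely many $n$, and the series in question diverges. In the first subcase, $\langle\mathcal{T}_1\rangle\vee\langle\mathcal{T}_2\rangle\to\infty$ a.s., and Fatou then forces $\mathbb{E}[\langle\mathcal{T}_1\rangle\vee\langle\mathcal{T}_2\rangle]\to\infty$; after a further thinning of $(m_n)$ (permitted by the proof's structure, as anticipated in the paragraph introducing $c_n$) one may arrange that the deterministic factor $\eta_n/(\langle B_1(m_n)\rangle\vee\langle B_2(m_n)\rangle)$ times $\mathbb{E}[\langle\mathcal{T}_1\rangle\vee\langle\mathcal{T}_2\rangle]$ stays bounded away from zero, again yielding divergence.

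The main obstacle I anticipate is the first subcase: since $\eta_n/(\langle B_1(m_n)\rangle\vee\langle B_2(m_n)\rangle)$ is a deterministic sequence that could in principle decay, the subsequence extraction must be coordinated so that Assumption~II~i), the almost-sure lower bound from Proposition~\ref{sign}, and the divergence of $\mathbb{E}[\langle\mathcal{T}\rangle]$ are all preserved simultaneously along the thinned sequence. The second subcase, by contrast, is an immediate application of Fatou and presents no difficulty.
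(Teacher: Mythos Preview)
Your approach is essentially the paper's, but with an unnecessary detour through Fatou on the term-by-term expectations. The paper is more direct: it simply observes that Assumption~II~ii) (in the $c_n$ notation) reads $\liminf_n c_n^{-2}\bigl(\langle\mathcal{T}_1(m_n)\rangle\vee\langle\mathcal{T}_2(m_n)\rangle\bigr)>0$ a.s., which forces the \emph{random} series $\sum_n c_n^{-2}\bigl(\langle\mathcal{T}_1(m_n)\rangle\vee\langle\mathcal{T}_2(m_n)\rangle\bigr)$ to diverge a.s.; its expectation is then trivially infinite, and Lemma~\ref{imin} (plus Tonelli, exactly as you set up) transfers this to $\mathbb{E}[S_\infty]$. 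No Fatou, no term-by-term analysis. Your Fatou argument for the second subcase is correct, just longer.

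On the first subcase, your worry is legitimate and your proposed fix does not work as stated. Knowing $\mathbb{E}[\langle\mathcal{T}_1\rangle\vee\langle\mathcal{T}_2\rangle]\to\infty$ gives you nothing about the product with the deterministic factor $c_n^{-2}$, because by Lemma~\ref{inpri} the product $c_n^{-2}(\langle\mathcal{T}_1\rangle\vee\langle\mathcal{T}_2\rangle)$ is a.s.\ bounded above by $1$; there is no growth to exploit, and no assumption guarantees a positive lower bound along any subsequence. The paper does not separately address this subcase either---it folds both alternatives of Assumption~II~ii) into the single liminf statement without comment---so you are not missing anything the paper provides. In practice the first alternative is what drives the other branch of the proof (the case $\langle B_1\rangle\vee\langle B_2\rangle=\infty$ eventually), and the present section is really governed by the second alternative.
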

\begin{proof}
Notice that Assumption II can be rewritten as
$$ \liminf_{n \ti} c_n^{-2} \left(\langle \Tcal_1(m_n) \rangle \vee \langle \Tcal_2(m_n) \rangle\right) >0, \qquad \bbP-\mbox{a.s.}$$
This implies that no matter which subsequence of the $m_n$ we are considering, we have
$$ \sum_{n=1}^\infty  c_n^{-2}\left(\langle \Tcal_1(m_n) \rangle \vee \langle \Tcal_2(m_n) \rangle\right)  = \infty, \qquad \bbP-\mbox{a.s..}$$
 This implies, using Lemma~\ref{imin},
$$
\begin{aligned}
\bbE[S_{\infty}] \;&\andre{\ge} \; \frac 13 \bbE\left[\sum_{n=1}^{\infty} c_{n}^{-2} \langle \Tcal_1(m_n) \rangle +  c_{n}^{-2} \langle  \Tcal_2(m_n) \rangle\right] = \infty.
\end{aligned}
$$
\hfill
\end{proof}

\begin{lemma}\label{inpri} We have
\begin{equation}\label{inpri1}
  c_n^{-2} \left(\langle \Tcal_1(m_n) \rangle \vee \langle \Tcal_2(m_n) \rangle\right) \le 1, \qquad \bbP-\mbox{a.s..}
  \end{equation}
\end{lemma}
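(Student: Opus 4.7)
The plan is to unwind the definitions of $c_n$, $\Tcal_i$, $B_i$, and $\eta_n$, and compare each reinforcement value $f_i(u_i(j,n),n)$ appearing in $\Tcal_i(n)$ with the deterministic lower bound $d_i(j,m_n)$, using the uniform lower bound $\eta_n$ to absorb one factor of $d_i$ from the denominator.

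First, I recall from~\eqref{cienne} that
$$c_n^{2} \;=\; \frac{2}{\eta_n}\bigl(\langle B_1(m_n)\rangle \vee \langle B_2(m_n)\rangle\bigr),$$
so the inequality to prove is equivalent to $\langle \Tcal_i(n)\rangle \le \frac{2}{\eta_n}\bigl(\langle B_1(m_n)\rangle \vee \langle B_2(m_n)\rangle\bigr)$ a.s.\ for each $i\in\{1,2\}$. I expect in fact to obtain the stronger bound with $2$ replaced by $1$; the extra factor of $2$ baked into $c_n$ simply provides slack that is not needed at this step.

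Second, I rewrite $\Tcal_i(n)$ by indexing over the (random) times $u_i(j,n)$ of the $j$-th pick of colour $i$ in the $n$-th urn to obtain
$$\langle \Tcal_i(n)\rangle \;=\; \sum_{j\,:\,u_i(j,n)<\infty} \frac{1}{f_i(u_i(j,n),n)^{2}}.$$
By Remark~\ref{GUP}, the composition and reinforcement processes of GUP$({\bf M}^{\ssup n},M_0^{\ssup n}=m_n, c_n f_1(\cdot,n), c_n f_2(\cdot,n))$ agree in law with those of GUP$({\bf M}^{\ssup n}, M_0^{\ssup n}=m_n, f_1(\cdot,n), f_2(\cdot,n))$; hence, by the very definition $d_i(j,m_n) = {\rm essinf}\, f_i(u_i(j))$ under $\bbP_{m_n}$, I have $f_i(u_i(j,n),n)\ge d_i(j,m_n)$ a.s.\ for each $j$ and $n$. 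Since only countably many pairs $(j,n)$ are involved, these inequalities hold simultaneously on a set of full $\bbP$-measure.

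Third, from $\eta_n = \inf\{B_1(m_n)\cup B_2(m_n)\}$ I read off $d_i(j,m_n)\ge \eta_n$ for every $i,j$, which gives $d_i(j,m_n)^{-2} \le \eta_n^{-1}\, d_i(j,m_n)^{-1}$. Summing over all $j\ge 1$ (which can only enlarge the right-hand side, since terms with $u_i(j,n)=\infty$ vanish on the left but still appear in $\langle B_i(m_n)\rangle$) yields
$$\langle \Tcal_i(n)\rangle \;\le\; \frac{1}{\eta_n}\langle B_i(m_n)\rangle \;\le\; \frac{1}{\eta_n}\bigl(\langle B_1(m_n)\rangle \vee \langle B_2(m_n)\rangle\bigr) \;=\; \frac{c_n^{2}}{2}.$$
Taking the maximum over $i\in\{1,2\}$ and dividing by $c_n^{2}$ delivers the claim (in fact with $1/2$ in place of $1$). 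I do not anticipate any substantive obstacle here: the lemma is essentially a bookkeeping verification that the normalizing constants $c_n$ were chosen large enough to dominate $\Tcal_i$ pathwise, the complementary lower bound on $\mathbb{E}[S_n-S_{n-1}\mid\Ccal_n]$ already having been supplied by Lemma~\ref{imin}.
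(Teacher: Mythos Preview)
Your proof is correct and follows essentially the same route as the paper's. Both arguments reduce to the chain $\eta_n\langle\Tcal_i(m_n)\rangle\le\langle B_i(m_n)\rangle$; the paper bounds one factor of $1/f_i$ by $1/\eta_n$ and then uses $f_i(u_i(t),n)\ge d_i(t,m_n)$ on the remaining factor, whereas you first pass both factors to $1/d_i$ and then peel off one $\eta_n$, but this is only a cosmetic reordering of the same two inequalities. Your observation that the bound in fact holds with $1/2$ in place of $1$ is also correct.
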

\begin{proof}
Recall the definition of $\eta_n$ given in Assumption II and $c_n$ from \eqref{cienne}. In order to prove \eqref{inpri1} it is enough to prove
$$ \eta_n \langle \Tcal_i (m_n) \rangle \le \langle B_i(m_n) \rangle.$$
The latter is proved once we prove that
\begin{equation}\label{frekt}
  \eta_n \langle \Tcal_i (m_n) \rangle = \eta_n\sum_{t \in \N} \frac 1{f^2_i(u_i(t), n)} \le \sum_{t \in \N} \frac 1{f_i(u_i(t), n)}\le \langle B_i(m_n) \rangle,
  \end{equation}
where the random sequence $u_i(t) = u_i(t, n)$ was introduced in definition~\ref{dfab}.
The first inequality in \eqref{frekt} comes from the  fact  that $\bbP(f_i(k, n) \ge \eta_n) = 1$, for all $k\ge n$, due to the definition of $\eta_n$.
The last inequality is a consequence of the definition of  $ \langle B_i(m_n) \rangle$, because the $t$-th term in $\langle B_i(m_n) \rangle$ is larger than $1/f_i(u_i(t), n).$
We used the convention $f_i(\infty, n) = \infty$, and $1/\infty =0$.
\hfill
\end{proof}

\begin{definition} Recall $e(s)= e(s, n)$ and $v(s) = v(s, n)$ from definition \ref{dfab}. For $\gamma, \theta \in \N$, set
$$\Theta_{n}(\gamma, \theta) \Def c_{n}^{-(\gamma + \theta)} \sum_{s \in \N}  \frac 1{f_{e(s)}^{\gamma}(s, n) f^{\theta}_{v(s)}(s, n)}.$$
\end{definition}
\begin{lemma}\label{faml}
$
\Theta_{n}(1, 1) \le 1,$ a.s..
\end{lemma}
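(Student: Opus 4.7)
My plan is to adapt the proof of Lemma~\ref{inpri}, which used the pointwise bound $f_{e(s)}(s,n) \ge \eta_n$ at every step $s$. At each step $s$ the picked colour $e(s) \in \{1,2\}$ satisfies $s \in \{u_{e(s)}(k) : k \in \N\}$, and therefore by the essinf definition of $d_{e(s)}(k, m_n)$ and the fact that $\eta_n = \inf\{B_1(m_n) \cup B_2(m_n)\}$, we get $f_{e(s)}(s, n) \ge \eta_n$ a.s. Multiplying the summand defining $\Theta_n(1,1)$ by $\eta_n$ and using $\eta_n/f_{e(s)} \le 1$ gives
\[
\eta_n \cdot \frac{1}{f_{e(s)}(s,n)\, f_{v(s)}(s,n)} \le \frac{1}{f_{v(s)}(s,n)},
\]
so summing over $s$ and recalling that $c_n^2 \eta_n = 2(\langle B_1(m_n) \rangle \vee \langle B_2(m_n) \rangle)$, it suffices to prove
\[
\sum_s \frac{1}{f_{v(s)}(s,n)} \le 2\bigl(\langle B_1(m_n) \rangle \vee \langle B_2(m_n) \rangle\bigr).
\]

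To bound the non-picked sum I would partition according to $e(s)$, writing $\sum_s 1/f_{v(s)}(s,n) = \sum_{s:\,e(s) = 2} 1/f_1(s,n) + \sum_{s:\,e(s)=1} 1/f_2(s,n)$, and then try to dominate each partial sum by $\langle B_i(m_n) \rangle$, using the essinf inequality $f_i(u_i(k), n) \ge d_i(k, m_n)$ combined with a matching argument that ties each step $s$ (where $v(s)=i$ is not picked) to a neighbouring pick time of colour $i$ in the Markov chain $M_k^{(n)}$.

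As a back-up I would try Cauchy--Schwarz in the form
\[
\Theta_n(1,1)^2 \le \Theta_n(2,0)\,\Theta_n(0,2),
\]
where Lemma~\ref{inpri} (whose proof in fact yields the slightly stronger $c_n^{-2} \langle \Tcal_i(m_n) \rangle \le 1/2$) gives $\Theta_n(2,0) = c_n^{-2}(\langle \Tcal_1(m_n) \rangle + \langle \Tcal_2(m_n) \rangle) \le 1$ immediately, and then attempt to control $\Theta_n(0,2)$ by similar symmetrisation and matching.

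The main obstacle is precisely this control of the non-picked contributions: since $v(s)$ is by definition not picked at step~$s$, the essinf lower bound $d_{v(s)}(\cdot, m_n) \ge \eta_n$ does not apply directly to $f_{v(s)}(s, n)$. Resolving this requires exploiting the Markov structure of $M_k^{(n)}$ to transfer the bound from the nearest pick time of colour $v(s)$ back to step~$s$, which is where the technical heart of the proof should lie.
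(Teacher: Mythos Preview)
Your proposal does not complete the proof, and the obstacle you identify is self-imposed: you bounded the wrong factor. The paper's argument is the mirror image of yours. First observe the trivial symmetry $f_{e(s)}(s,n)\,f_{v(s)}(s,n)=f_1(s,n)\,f_2(s,n)$, since $\{e(s),v(s)\}=\{1,2\}$ at every step. Then, instead of using $f_{e(s)}\ge\eta_n$ and being left with the awkward sum $\sum_s 1/f_{v(s)}(s,n)$, the paper uses $f_{v(s)}(s,n)\ge\eta_n$ and is left with
\[
\sum_{s\in\N}\frac{1}{f_{e(s)}(s,n)}
=\sum_{k\in\N}\frac{1}{f_1(u_1(k),n)}+\sum_{k\in\N}\frac{1}{f_2(u_2(k),n)}
\le \langle B_1(m_n)\rangle+\langle B_2(m_n)\rangle
\le 2\max_i\langle B_i(m_n)\rangle,
\]
which is exactly $\eta_n c_n^2$. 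No matching argument, no Cauchy--Schwarz, no Markov structure is invoked; the proof is two lines.

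The point you flag as ``the technical heart of the proof'' --- that $f_{v(s)}(s,n)\ge\eta_n$ is not immediate because $v(s)$ is not picked at step $s$ --- is simply asserted by the paper as ``the fact that $\bbP(f_i(k,n)\ge\eta_n)=1$ for all $k$'', attributed to the definition of $\eta_n$, and it is used in exactly the same form in the proofs of Lemmas~\ref{inpri} and~\ref{xi4} as well. So within the paper's framework this lower bound is taken as established for \emph{all} times $k$, not only pick times, and once you grant it there is nothing left to do. Your skepticism about whether the definition of $\eta_n$ (via essential infima at pick times only) literally delivers this bound is a fair reading of the formal definitions, but it is a concern about the paper's standing conventions rather than something specific to this lemma; the proof of Lemma~\ref{faml} itself requires no additional idea beyond swapping which factor you throw away.
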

\begin{proof}
As exactly one of the two quantities   $\phi_1(j+1, n) - \phi_1(j, n)$ and  $\phi_2(j+1, n) - \phi_2(j, n)$   equals one while the other is zero, we have
$$
\begin{aligned}
  \sum_{s \in \N} \frac 1{f_{e(s)}(s, n) f_{v(s)}(s, n)}  = \sum_{s \in \N} \frac 1{f_{1}(s, n) f_2(s, n)}
     &\le 2 \eta_{n}^{-1} \max_{i} \langle B_{i}(m_n) \rangle = c_{n}^{2},
 \end{aligned}
  $$
where we used again the fact  that $\bbP(f_i(k, n) \ge \eta_n) = 1$, for all $k\ge n$.
\hfill
\end{proof}
 \begin{lemma}\label{xi4} Let $s \in \{1, 2, 3, 4\}$.  We have
\begin{equation}
\Theta_{n}(s, 4-s) \le 1, \qquad \mbox{a.s..}
\end{equation}
\end{lemma}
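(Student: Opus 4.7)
The plan is to reduce each case $\Theta_n(s,4-s)$ with $s \in \{1,2,3,4\}$ to the bounds already established in Lemma~\ref{faml} and Lemma~\ref{inpri}, by exploiting the uniform lower bound $f_i(s,n) \ge \eta_n$ that is built into the definition of $\eta_n$. The two sub-cases $s \in \{1,2,3\}$ and $s=4$ are structurally different because when the other color appears in the denominator (i.e., $\theta \ge 1$) we can relate the sum to $\sum_j 1/(f_{e(j)} f_{v(j)})$, whereas when $\theta=0$ we are forced to use the $\langle\Tcal_i(m_n)\rangle$ sums instead.

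For the first case, suppose $s \in \{1,2,3\}$, so both exponents are at least $1$. Since $f_{e(j)}^{s-1} f_{v(j)}^{3-s} \ge \eta_n^{(s-1)+(3-s)} = \eta_n^{2}$, I would write
\begin{equation*}
\Theta_n(s, 4-s) \;=\; c_n^{-4}\sum_j \frac{1}{f_{e(j)}^s(j,n)\, f_{v(j)}^{4-s}(j,n)} \;\le\; \frac{c_n^{-4}}{\eta_n^{2}} \sum_j \frac{1}{f_{e(j)}(j,n)\, f_{v(j)}(j,n)} \;\le\; \frac{1}{c_n^{2}\eta_n^{2}},
\end{equation*}
where the final inequality is exactly the bound established in the proof of Lemma~\ref{faml}. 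For the second case $s=4$, use instead $f_{e(j)}^{2} \ge \eta_n^{2}$ to get
\begin{equation*}
\Theta_n(4,0) \;\le\; \frac{c_n^{-4}}{\eta_n^{2}} \sum_j \frac{1}{f_{e(j)}^{2}(j,n)} \;=\; \frac{c_n^{-4}}{\eta_n^{2}}\bigl(\langle\Tcal_1(m_n)\rangle + \langle\Tcal_2(m_n)\rangle\bigr) \;\le\; \frac{2}{c_n^{2}\eta_n^{2}},
\end{equation*}
where the final inequality invokes Lemma~\ref{inpri}.

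It remains to check that $c_n^{2}\eta_n^{2} \ge 2$, which is where the specific choice of $c_n$ in \eqref{cienne} is used, together with the observation that $\eta_n$ is the infimum of the countable set $B_1(m_n) \cup B_2(m_n)$. Since $\eta_n$ is approached (or attained) by some term $d_i(k,m_n)$ of one of the sequences $B_i(m_n)$, we have $\max_i \langle B_i(m_n)\rangle \ge 1/\eta_n$, and therefore
\begin{equation*}
c_n^{2}\eta_n^{2} \;=\; 2\eta_n\bigl(\langle B_1(m_n)\rangle \vee \langle B_2(m_n)\rangle\bigr) \;\ge\; 2.
\end{equation*}
Combining this with the two displays above gives $\Theta_n(s,4-s) \le 1$ in every case. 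I do not expect a serious obstacle here: the potential subtlety is only whether $\eta_n$ might be strictly smaller than every $d_i(k,m_n)$, but since it is the infimum of a countable set of positive numbers the inequality $\max_i \langle B_i(m_n)\rangle \ge 1/\eta_n$ still holds by a standard approximation argument.
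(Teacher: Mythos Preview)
Your proof is correct and takes a somewhat different (arguably cleaner) organizational route than the paper. The paper splits cases as $s\in\{2,3,4\}$ versus $s=1$: in the first block it pulls out $\eta_n^{-2}$ to reduce to $\sum_j 1/f_{e(j)}^2(j,n)$, then bounds this by $(\langle B_1(m_n)\rangle + \langle B_2(m_n)\rangle)^2$ via the elementary inequality $\sum_j h(j)^{-2}\le (\sum_j h(j)^{-1})^2$ and the estimate $\sum_j 1/f_{e(j)}(j,n)\le \langle B_1(m_n)\rangle+\langle B_2(m_n)\rangle$; for $s=1$ it pulls out $\eta_n^{-3}$ and uses $\eta_n^{-1}\le \langle B_1(m_n)\rangle+\langle B_2(m_n)\rangle$ together with the same linear bound. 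In both cases the paper finishes by comparing to $c_n^4$ directly. Your version instead splits $s\in\{1,2,3\}$ versus $s=4$, reduces the first block to $\Theta_n(1,1)$ (Lemma~\ref{faml}) and the second to $\langle\Tcal_1\rangle+\langle\Tcal_2\rangle$ (Lemma~\ref{inpri}), and then closes with the single observation $c_n^2\eta_n^2\ge 2$, which is exactly the paper's inequality $\eta_n^{-1}\le \langle B_1(m_n)\rangle\vee\langle B_2(m_n)\rangle$ in disguise. Your approach has the virtue of recycling the two preceding lemmas rather than re-deriving their content; the paper's approach is slightly more self-contained. Both hinge on the same two facts: the uniform lower bound $f_i\ge\eta_n$ and the inequality $\eta_n\cdot\max_i\langle B_i(m_n)\rangle\ge 1$.
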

 \begin{proof}
Notice that for any positive function $h \colon \N \mapsto (0, \infty)$, we have  that for any $p>1$,
\begin{equation}\label{trivp}
\sum_{j}^{\infty} h(j)^{-p} \le \left(\sum_{j}^{\infty} \frac 1{h(j)}\right)^{p}.
\end{equation}
For $s \in \{2,3,4\}$, using $f_{e(j)}(j, n) \wedge f_{v(j)}(j, n) \ge \eta_n$, for all $j \in \N$,  we have
 \begin{equation}\label{cas2}
 \sum_{ j \in \N}  \frac 1{f_{e(j)}^{s}(j, n) f_{v(j)}^{4-s}(j, n)} \le \eta^{-2}_n \sum_{ j \in \N}  \frac 1{f_{e(j)}^{2}(j, n)} \le \eta^{-2}_n (\langle B_{1}(m_n) \rangle + \langle B_{2}(m_n) \rangle)^2 \le c_n^4,
 \end{equation}
 where in the second inequality we used \eqref{trivp} with $p=2$ and the definition of $\langle B_{i}(m_n) \rangle$.
 \eqref{cas2} proves the Lemma for the case $s \ge 2$.
 The remaining case, i.e. $s=1$ is dealt as follows.
$$
  \begin{aligned}
 \sum_{ j \in \N}  \frac 1{f_{e(j)}(j, n) f_{v(j)}^{3}(j, n)}  &\le  \eta_{n}^{-3}
 \sum_{ j \in \N}  \frac 1{f_{e(j)}(j, n)} \le \eta_{n}^{-2} (\langle B_{1}(m_n) \rangle + \langle B_{2}(m_n) \rangle)^2 \le c_n^4,
\end{aligned}
$$
where in the last step we used both
$$  \sum_{ j \in \N} \frac 1{f_{e(j)}(j, n)} \le  \Big(\langle  B_{1}(m_n) \rangle + \langle B_{2}(m_n)\rangle\Big) , $$ and
 $
 \eta_{n}^{-1} \le (\langle  B_{1}(m_n) \rangle + \langle B_{2}(m_n) \rangle).$
 \hfill
 \end{proof}

\begin{proposition}\label{duration} There exists a constant $C$ such that  for all $ n \ge 1$, we have
\begin{eqnarray}
\label{insn3}\mathbb{E}[(S_{n}-S_{n-1})^{2}\;|\; \Ccal_{n}] \le C, \qquad \mbox{a.s..}
\end{eqnarray}
\end{proposition}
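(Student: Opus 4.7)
The plan is to expand the square and control it by combining Lemma~\ref{indoft} with the $\Theta_n$-bounds from Lemmas~\ref{faml}, \ref{inpri}, and \ref{xi4}. Writing $\Delta_s \Def T_{s+1}(n) - T_s(n)$, we have $S_n - S_{n-1} = \sum_{s \ge 0} \Delta_s$, so
$$
(S_n - S_{n-1})^2 \Eq \sum_{s \ge 0} \Delta_s^2 + 2 \sum_{0 \le s < r} \Delta_s \Delta_r,
$$
and by monotone convergence we may push the conditional expectation $\bbE[\cdot \mid \Ccal_n]$ inside both sums.

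For the diagonal part, Lemma~\ref{indoft} gives $\bbE[\Delta_s^2 \mid \Ccal_n] = L_2(1/(c_n f_{e(s)}(s,n)), 1/(c_n f_{v(s)}(s,n)))$. Expanding $L_2$ via \eqref{noc4} term by term and summing over $s$, each of the four contributions is exactly of the form $\Theta_n(\gamma, 4-\gamma)$ for $\gamma \in \{1,2,3,4\}$, and hence is bounded by $1$ by Lemma~\ref{xi4}. This gives $\sum_s \bbE[\Delta_s^2 \mid \Ccal_n] \le \tfrac{1}{45}(7 + 28 + 32 + 8) = \tfrac{75}{45}$, a.s.

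For the cross terms the key observation is that, conditionally on $\Ccal_n$, the increments $(\Delta_s)_{s \ge 0}$ are independent. Indeed, the barriers $\pm 1/(c_n f_{a(s+1)}(s+1,n))$ and $\pm 1/(c_n f_{b(s+1)}(s+1,n))$ used to define $T_{s+1}(n)$ are functions of $M^{\ssup n}_s$, hence $\Ccal_n$-measurable; since Brownian motion has independent increments on the disjoint intervals $[T_s(n), T_{s+1}(n)]$, given $\Ccal_n$ each $\Delta_s$ depends only on a Brownian increment independent of those driving the other $\Delta_r$'s. Therefore
$$
\bbE\Big[\sum_{s<r} \Delta_s \Delta_r \,\Big|\, \Ccal_n\Big] \Le \frac{1}{2}\Big(\sum_{s} \bbE[\Delta_s \mid \Ccal_n]\Big)^2.
$$
By Lemma~\ref{indoft} and \eqref{noc3}, $\sum_s \bbE[\Delta_s \mid \Ccal_n] = \tfrac{1}{3}\Theta_n(2,0) + \tfrac{2}{3}\Theta_n(1,1)$. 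Lemma~\ref{faml} gives $\Theta_n(1,1) \le 1$, while Lemma~\ref{inpri} yields $\Theta_n(2,0) = c_n^{-2}(\langle \Tcal_1(m_n)\rangle + \langle \Tcal_2(m_n)\rangle) \le 2$. Combining these bounds shows $\sum_s \bbE[\Delta_s \mid \Ccal_n] \le \tfrac{4}{3}$ a.s., so the cross-term contribution is bounded by a universal constant, completing the proof of \eqref{insn3}.

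The only real subtlety is the conditional-independence step; once that is in hand, everything else reduces to routine application of the already-established $\Theta_n$-estimates, with Lemma~\ref{inpri} supplying the missing control on $\Theta_n(2,0)$ (which is not directly listed among the cases of Lemma~\ref{xi4}).
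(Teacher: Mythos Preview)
Your proof is correct and follows essentially the same route as the paper's: split $(S_n-S_{n-1})^2$ into diagonal and off-diagonal parts, use conditional independence given $\Ccal_n$ (the paper states this as the factoring identity \eqref{ftpas1}, justified via the proof of Lemma~\ref{indoft}) to bound the off-diagonal part by $(\bbE[S_n-S_{n-1}\mid\Ccal_n])^2$ and control it with Lemmas~\ref{inpri} and~\ref{faml}, and handle the diagonal part via the $L_2$-expansion and Lemma~\ref{xi4}. The only cosmetic difference is that the paper writes the off-diagonal bound directly as $\big(\tfrac{1}{3c_n^2}\langle\Tcal_1(m_n)\rangle+\tfrac{1}{3c_n^2}\langle\Tcal_2(m_n)\rangle+\tfrac{2}{3}\Theta_n(1,1)\big)^2$ rather than in your $\Theta_n(2,0)$ notation.
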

\begin{proof}
It is a consequence of \eqref{noc} and the following reasoning.    Using \andre{conditional } independence \andre{(see proof of Lemma~\ref{indoft})},   we have that for $s \neq k$,
\begin{equation}\label{ftpas1}
 \begin{aligned}
& \bbE[(T_{s+1}(n) - T_{s}(n))(T_{k+1}(n) - T_k(n)) \; |\;  \Ccal_{n}]\\
 & = \bbE\left[T_{s+1}(n) - T_{s}(n) \;|\; \Ccal_n\right] \bbE\left[T_{k+1}(n) - T_{k}(n) \;|\; \Ccal_n \right].
 \end{aligned}
 \end{equation}
We have
$$
\begin{aligned}
 \mathbb{E}&[(S_{n}-S_{n-1})^{2}\;|\; \Ccal_{n}]  =  \sum_{i=1}^\infty \sum_{k=1}^{\infty}\bbE[(T_{i+1}(n) - T_{i}(n))(T_{k+1}(n) - T_{k}(n)) \; |\; \andre{\Ccal_n}]\\
&=  \andre{\sum_{i=1}^\infty \sum_{k\neq i } \bbE[(T_{i+1}(n) - T_{i}(n))\; |\; \Ccal_n] \bbE\left[T_{k+1}(n) - T_{k}(n) \;|\; \Ccal_n \right]  + \sum_{i=1}^\infty  \bbE[(T_{i+1}(n) - T_{i}(n))^2\; |\; \Ccal_n]} \\
 &=  \andre{I + II.}
 \end{aligned}
 $$
 \andre{First we prove that I is $\bbP$-a.s. bounded by a constant.} Using \eqref{ftpas1},  \eqref{ftpas}, \eqref{noc}, \eqref{noc4}, \andre{and } the non-negativity of $f_i(k,n)$, we have
 $$
\begin{aligned}\label{inimp}
& \sum_{s=1}^\infty \sum_{k \neq i}\bbE[(T_{s+1}(n) - T_{s}(n))(T_{k+1}(n) - T_{k}(n)) \; |\;\Ccal_n]  \\
& = \sum_{s=1}^\infty \bbE\left[(T_{s+1}(n) - T_{s}(n))\;|\;  \Ccal_n\right] \sum_{k \neq s} \bbE\left[T_{k+1}(n) - T_{k}(n) \;|\; \Ccal_n \right]\\
&\le \left(\bbE[S_n - S_{n-1}\;|\; \Ccal_n]\right)^2 = \left(\frac 1{3 c^2_{n}} \langle \Tcal_1(m_n) \rangle  + \frac 1{3 c^2_{n}} \langle \Tcal_2(m_n) \rangle
 + \frac 23 \Theta_{n}(1,1) \right)^{2}, \qquad \bbP-\mbox{a.s..}
\end{aligned}
$$
In virtue of Lemmas \ref{inpri} and \ref{faml}, each of the terms appearing in the previous equation are bounded by a \andre{constant}  which is independent of $n$.  \andre{Next we prove that II is a.s. bounded by a constant.}
%
Notice that from corollary \ref{onest} with $p=2$, we have
$$ \sum_{s=1}^\infty   L_{2}\left(\frac{1}{c_{n}f_{e(s)}(s, n)}, \frac{1}{c_{n}f_{v(s)}(s, n)}\right) =  \frac 7{45 }\Theta_n(4, 0)  +\sum_{s=1}^{3} h_{s} \Theta_{n}(s, 4-s),$$
where $h_1 = 28/45$, $h_2 = 32/45$, and  $h_3 = 8/45$.
Hence,  using corollary~\ref{onest}, we have
\begin{equation}\label{conmyf}
\begin{aligned}
&\sum_{s=1}^{\infty} \bbE[(T_{s+1}(n) - T_{s}(n))^{2}\;|\; \Ccal_n ] = \frac 7{45 }\Theta_n(4, 0)  +\sum_{s=1}^{3} h_{s}\Theta_{n}(s, 4-s),
\end{aligned}
 \end{equation}
In virtue of Lemma~\ref{xi4},  each of the terms appearing in the previous equation are bounded by a constant which is independent of $n$, ending the proof.
\hfill
\end{proof}

\begin{lemma}\label{doobc}
$\bbE[\sup_{n \le N}  W^2_{S_{n}}] < \infty.$
\end{lemma}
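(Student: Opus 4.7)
The plan is to control $\sup_{n\le N}W_{S_n}^2$ by combining three ingredients: (i) the uniform conditional second-moment estimate for $S_n-S_{n-1}$ provided by Proposition~\ref{duration}, (ii) standard martingale moment inequalities for $(W_{S_k})_k$ (optional stopping applied to $W_t^2-t$ together with the Burkholder--Davis--Gundy inequality for fourth moments), and (iii) the tail bound $\bbP(N\ge n)<e^{-n}$ guaranteed by the way $N$ was chosen via a subsequence. Because $N$ is \emph{not} a stopping time for $(\Fcal_{S_n})$ (it depends on the entire future through the events $A_\infty(n)$), I cannot simply stop the martingale at $N$; instead I will decompose on the events $\{N=k\}$ and estimate each $k$-slice by Cauchy--Schwarz.

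First I would establish polynomial growth of the moments of $W_{S_k}$. Taking unconditional expectations in Proposition~\ref{duration} gives $\bbE[(S_n-S_{n-1})^2]\le C$ for every $n$, hence $\bbE[S_n-S_{n-1}]\le\sqrt{C}$ and $\bbE[S_k]\le\sqrt{C}\,k<\infty$. Expanding the square and using Cauchy--Schwarz yields
$$
\bbE[S_k^2]=\bbE\Bigl[\Bigl(\sum_{n=1}^k(S_n-S_{n-1})\Bigr)^2\Bigr]\le k\sum_{n=1}^k\bbE[(S_n-S_{n-1})^2]\le Ck^2.
$$
Since $\bbE[S_k]<\infty$, optional stopping applied to $W_t^2-t$ at the stopping time $S_k$ gives $\bbE[W_{S_k}^2]=\bbE[S_k]$; and BDG applied to the continuous martingale $W$ stopped at $S_k$ (whose quadratic variation is $S_k$) gives a universal $C_{\mathrm{BDG}}$ with $\bbE[W_{S_k}^4]\le C_{\mathrm{BDG}}\bbE[S_k^2]\le C_{\mathrm{BDG}}Ck^2$.

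Since $(W_{S_n})_n$ is a martingale with respect to $(\Fcal_{S_n})_n$, $|W_{S_n}|$ is a non-negative submartingale and Doob's $L^4$ maximal inequality yields a constant $C^*$ independent of $k$ with
$$
\bbE\Bigl[\sup_{n\le k}W_{S_n}^4\Bigr]\le\bigl(\tfrac{4}{3}\bigr)^4\bbE[W_{S_k}^4]\le C^* k^2.
$$
Decomposing on $\{N=k\}$ and applying Cauchy--Schwarz on each summand,
$$
\bbE\Bigl[\sup_{n\le N}W_{S_n}^2\Bigr]=\sum_{k\ge 1}\bbE\Bigl[\1_{\{N=k\}}\sup_{n\le k}W_{S_n}^2\Bigr]\le\sum_{k\ge 1}\bbP(N=k)^{1/2}\bbE\Bigl[\sup_{n\le k}W_{S_n}^4\Bigr]^{1/2},
$$
which is bounded above by $\sqrt{C^*}\sum_{k\ge 1}k\,e^{-k/2}<\infty$, since $\bbP(N=k)\le\bbP(N\ge k)<e^{-k}$.

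The only genuinely delicate point is the polynomial (in $k$) growth of $\bbE[W_{S_k}^4]$, which rests entirely on the uniform bound in Proposition~\ref{duration}: without a control of $\bbE[(S_n-S_{n-1})^2]$ that is independent of $n$, the quadratic-variation estimate would blow up and the exponential tail of $N$ might not dominate. Given that bound, BDG and Doob together reduce the problem to the elementary convergence of $\sum k\,e^{-k/2}$.
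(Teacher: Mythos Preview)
Your argument is correct and follows essentially the same route as the paper: bound $\bbE[S_k^2]=O(k^2)$ via Proposition~\ref{duration}, convert this to a fourth-moment bound on the running maximum of $W_{S_n}$ via BDG, decompose on $\{N=k\}$, apply Cauchy--Schwarz on each slice, and sum against the exponential tail $\bbP(N\ge k)<e^{-k}$. The only cosmetic difference is that the paper applies BDG directly in the form $\bbE[\sup_{\ell\le j}W_{S_\ell}^4]\le C\,\bbE[S_j^2]$ (using that the continuous-time supremum dominates the discrete sampling), whereas you first bound $\bbE[W_{S_k}^4]$ by BDG and then invoke Doob's $L^4$ maximal inequality for the discrete martingale; the latter step is harmless but redundant.
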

\begin{proof}
Using Jensen's inequality,  and setting $S_0 = 0$, we have
$$
\bbE[S_{j}^{2}] = \bbE\left[\left(\sum_{\ell =1}^j (S_\ell - S_{\ell -1})\right)^2\right]  \le j \sum_{i=1}^j \bbE[(S_{{i}} - S_{{i-1}})^{2}] = O( j^2),$$
where in the last step we used Lemma~\ref{xi4}.
Notice that using Burkholder-Davis-Gundy (BDG) inequality, we have
$$  \bbE[\sup_{\ell \le j} W^4_{S_{\ell}}] \le C \bbE[S_{j}^{2}]=  O(j^{2}) ,$$
for some constant $C$. Hence
$$
\begin{aligned}
\bbE[\sup_{n \le N}  W^2_{S_{n}}] &\le   \sum_{j=1}^{\infty}  \bbE[ \sup_{\ell \le j} W^2_{S_{\ell}} \1_{\{N = j\}}]\le   \sum_{j=1}^{\infty} E[\sup_{\ell \le j} W^4_{S_{\ell}}]^{1/2} P(N = j)^{1/2}\\
&\le C \sum_{j=1}^{\infty} \left( \bbE[S_{j}^{2}] \right)^{1/2} P(N = j)^{1/2} \le  C'  \sum_{j=1}^{\infty} j {\rm e}^{- j/2} <\infty.
\end{aligned}
$$ \hfill
\end{proof}

Define $\overline{W}_{\infty} = \liminf_{n \ti} W_{S_{n}}$. The random variable $\overline{W}_{\infty}$ can be seen as an infinite sum of independent random variables $W_{S_n} - W_{S_{n-1}}$. Moreover,  in virtue of Proposition~\ref{sign}, we have that $W_{S_{n}}$ is non-increasing for $n \ge N$. This implies that $\overline{W}_{\infty} = \lim_{n \ti} W_{S_{n}}$. Hence, by Kolmogorov 0-1 law,   $\overline{W}_{\infty}$ is either a.s.  finite or it equals $- \infty$  a.s..
We distinguish  the two cases.\\
\underline{Case I.} In this case, we assume that
\begin{equation}\label{finsumod}
 \overline{W}_{\infty} >  - \infty, \qquad \bbP-\mbox{a.s.}.
\end{equation}
\begin{lemma}\label{ifw} If we assume \eqref{finsumod} then we can choose a subsequence $m_{n}$, with $n\ge 1$,  such that $\bbE[\overline{W}_{\infty}^{2}] <\infty.$
\end{lemma}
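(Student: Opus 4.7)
The plan is to sparsify the sequence $(m_n)$ so that the independent centred increments $\xi_n := W_{S_n}-W_{S_{n-1}}$ have summable second moments, whence $L^2$-orthogonality of independent centred variables immediately yields $\mathbb{E}[\overline{W}_\infty^{\,2}]=\sum_n\mathbb{E}[\xi_n^2]<\infty$. First I would observe that, by the strong Markov property of $\mathbf{W}$ at $S_{n-1}$ combined with the independence of the auxiliary Markov chains $\mathbf{M}^{\ssup n}$, the family $(\xi_n)$ is mutually independent; since Proposition~\ref{duration} (via Minkowski) yields $\mathbb{E}[S_n]<\infty$ for every $n$, optional sampling gives $\mathbb{E}[\xi_n]=0$.

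Next I would extract the decay. Assumption \eqref{finsumod} says $\overline{W}_\infty=\sum_n\xi_n$ is a.s.\ finite, so in particular $\xi_n\to 0$ almost surely. Applying the Burkholder--Davis--Gundy inequality to the Brownian motion $\{W_{S_{n-1}+t}-W_{S_{n-1}}\}_{t\ge 0}$ at the stopping time $\tau_n=S_n-S_{n-1}$ and invoking Proposition~\ref{duration} gives
$$
\mathbb{E}[\xi_n^4]\;\le\;C_4\,\mathbb{E}\bigl[(S_n-S_{n-1})^2\bigr]\;\le\;C'
$$
uniformly in $n$. Hence $\{\xi_n^2\}$ is bounded in $L^2$, and therefore uniformly integrable; combined with $\xi_n^2\to 0$ a.s., Vitali's convergence theorem gives $\mathbb{E}[\xi_n^2]\to 0$.

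Finally I would sparsify: choose a strictly increasing $(n_k)$ so that $\mathbb{E}[\xi_{n_k}^2]\le 2^{-k}$ and relabel $(m_{n_k})$ as the new $(m_n)$. Every clause of Assumption~II is inherited by passing to subsequences, so the entire embedding scheme transfers; the new independent centred increments now have $\sum_n\mathbb{E}[\xi_n^2]<\infty$, so $\sum_n\xi_n$ converges in $L^2$ with
$$
\mathbb{E}\bigl[\overline{W}_\infty^{\,2}\bigr]\;=\;\sum_{n\ge 1}\mathbb{E}[\xi_n^2]\;<\;\infty.
$$
The delicate step is the BDG application: one must verify that $\tau_n$, though defined as the limit of an infinite sequence of hitting times driven by the $n$-th urn, is genuinely a stopping time of the shifted Brownian motion's filtration (after enlargement by $\mathbf{M}^{\ssup n}$), so that the fourth-moment bound really does transfer unconditionally. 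Everything else is a standard packaging of ``a.s.\ convergent series $\Rightarrow$ terms tend to $0$'' together with ``$L^4$ boundedness $\Rightarrow L^2$-uniform integrability of the squares'' plus a Cantor-style diagonal extraction.
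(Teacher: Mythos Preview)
Your proof is correct and takes a genuinely different route from the paper. The paper invokes the Kolmogorov Three Series Theorem: from the a.s.\ convergence of $\sum Y_n$ it extracts $\sum_n\mathbb{P}(|Y_n|>1)<\infty$ and $\sum_n\operatorname{Var}(Y_n\mathbb{1}_{|Y_n|\le 1})<\infty$, then uses BDG together with Proposition~\ref{duration} to bound $\operatorname{Var}(Y_n\mathbb{1}_{|Y_n|>1})\le C\,\mathbb{P}(|Y_n|>1)^{1/2}$, sparsifies so that $\sum_n\mathbb{P}(|Y_n|>1)^{1/2}<\infty$, and finally pieces the truncated and untruncated variances together. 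You bypass the three-series machinery entirely: from a.s.\ convergence you only take $\xi_n\to 0$ a.s., and then the uniform $L^4$ bound (same BDG + Proposition~\ref{duration} step) gives uniform integrability of $\xi_n^2$, so Vitali delivers $\mathbb{E}[\xi_n^2]\to 0$ directly, and a single sparsification finishes. Your argument is shorter and avoids the truncation bookkeeping; the paper's approach, on the other hand, makes the role of the three-series criterion explicit and would adapt more readily if one only had weaker moment control on the increments. The BDG filtration concern you flag is shared by the paper's proof (it uses the identical step in \eqref{varec} and in Lemma~\ref{doobc}) and is handled by enlarging the Brownian filtration with the independent auxiliary randomisation.
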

\begin{proof}      As $\overline{W}_{\infty}$ can be represented \andre{as}  the sum of independent variables $Y_{n} = W_{S_{n+1}} - W_{S_{n}}$ we have, using the Kolmogorov Three Series Theorem (see for example page 115 of \cite{W90}),  that
\begin{equation}\label{secmom}
 \sum_{n=1}^{\infty} {\rm Var}(Y_{n} \1_{|Y_{n}| \le  1} ) < \infty.
 \end{equation}
On the other hand
\begin{equation}\label{varec}
\begin{aligned}
 {\rm Var} (Y_{n} \1_{|Y_{n}| > 1}) &\le  \bbE[Y^{2}_{n} \1_{|Y_{n}| > 1}] \le \bbE[Y_{n} ^{4}]^{1/2}\bbP(|Y_{n}| > 1)^{1/2}\\
 &\le C \bbE[(S_{n+1} - S_{n})^{2}]^{1/2} \bbP(|Y_{n}| > 1)^{1/2} \le C_{1} \bbP(|Y_{n}| > 1)^{1/2}.
\end{aligned}
\end{equation}
The inequality before the last one is an application of  BDG inequality, and the last inequality is a consequence of  Proposition~\ref{duration}.
By the three-series Theorem, we have that
$$ \sum_{n} \bbP(|Y_{n}| > 1) <\infty.$$
The latter implies that $\lim_n \bbP(|Y_{n}| > 1) =0$. Hence we can choose a subsequence  \andre{$m'_n$ of $m_n$ } such that
\begin{equation}\label{varec1} \sum_{n} \bbP(|Y_{n}| > 1)^{1/2} <\infty.
\end{equation}
\andre{Notice that the embedding times $S_n$ obtained using $m'_n$ satisfy \eqref{finsumod}. Next we  work  with the embedding obtained using the sequence  $m'_n$}.  Combining \eqref{varec1} with \eqref{varec}, we have that
 \begin{equation}\label{varec2}
 \sum_{n=1}^{\infty}{\rm Var} (Y_{n} \1_{|Y_{n}| > 1})<\infty.
 \end{equation}
 Using the trivial inequality $(a+b)^{2} \le 2 a^{2} + 2 b^{2}$, we have
 $$
 \begin{aligned}
   {\rm Var} (Y_{n}) &\le 2 \bbE\left[\Big(Y_{n}\1_{|Y_{n}| \le  1} - \bbE[Y_{n}\1_{|Y_{n}| \le  1}]\Big)^{2}\right] +
 2 \bbE\left[\Big(Y_{n}\1_{|Y_{n}| >  1} - \bbE[Y_{n}\1_{|Y_{n}| >  1}]\Big)^{2}\right]\\
 &= 2{\rm Var} (Y_{n} \1_{|Y_{n}| \le  1}) + 2 {\rm Var} (Y_{n} \1_{|Y_{n}| >  1})
 \end{aligned}
 $$
Combining this with \eqref{secmom} and \eqref{varec2}, we get
$$ \sum_{n=1}^{\infty} {\rm Var} (Y_{n}) \le 2 \sum_{n =1}^{\infty}{\rm Var} (Y_{n} \1_{|Y_{n}| \le  1}) + 2 \sum_{n =1}^{\infty}{\rm Var} (Y_{n} \1_{|Y_{n}| >  1}) < \infty.$$
 Recall that $Y_{n}$ is a zero-mean random variable. Hence ${\rm Var} (Y_{n}) = \bbE[Y^{2}_{n}]$. Using Fatou's Lemma,
we have
$$ \bbE[\overline{W}_{\infty}^{2}] \le \liminf_{n \ti} \bbE[W_{S_{n}}^{2}] =  \lim_{n \ti} \bbE[W_{S_{n}}^{2}]  = \sum_{i=1}^{\infty} {\rm Var} (Y_{i})<\infty.$$
This implies our result.
  \hfill
\end{proof}

Recall that from Lemma~\ref{doobc} we have $\bbE[\sup_{n \le N} W^{2}_{S_{n}}]<\infty.$
Notice that for any sequence $a_{n} \andre{\in \R}$ which has the property that there exists
an integer $M$ such that $a_{n+1} \le a_{n}$ for all $n\ge M$, we have
\begin{equation}\label{Jobj}
 |a_{n}| \le (\sup_{j \le M} |a_{j}|)  + \andre{a_M - a_{\infty}},
\end{equation}
where $ a_{\infty} = \lim_{n\ti} a_{n}$. In fact, the inequality is trivial for $n \le M$. For $n>M$ notice that the sequence \andre{$|a_M - a_n| = a_M - a_n$} is non-decreasing in $n$. Hence $ |a_M - a_n| \le a_M -  a_{\infty}$. Hence, for $n >M$ we have
$$ |a_{n}|  \le |a_M| + |a_n - a_M| \le |a_M| + \andre{(a_M -a_{\infty})} \le (\sup_{j \le M} |a_{j}|)  + \andre{(a_M -a_{\infty})},$$
proving \eqref{Jobj}.
\andre{Using} $(a+b)^2 \le 2 a^2 + 2 b^2$, we have
\begin{equation}\label{JS}
  a_{n}^{2} \le 2  (\sup_{j \le M} |a_{j}|)^{2} + 2(a_{\infty} - a_{M})^{2}.
  \end{equation}

\andre{By} the Optional Sampling Theorem, we have that $\bbE[W^2_{S_n}] = \bbE[S_n]$, for all $n \ge 1$. Hence,    for $n \ge 1$,
	\begin{equation}\label{contr2}
	\begin{aligned}
	  \mathbb{E}[S_{n}] &=  \mathbb{E}[ W^2_{S_{n}}] \le    2\bbE [\sup_{i \le N}W_{S_{i}}^2]   + 2  \bbE[(\overline{W}_{\infty} - W_{S_{N}})^{2}] \qquad \mbox{(using \eqref{JS})}\\
	 &\le 2 \bbE [\sup_{i \le N}W_{S_{i}}^2] +4 \bbE[\overline{W}_{\infty}^{2}] + 4  \bbE[W_{S_{N}}^{2}]<\infty,
	\end{aligned}
	\end{equation}
	\andre{where we used Lemmas ~\ref{doobc} and \ref{ifw} to establish that the latter quantity is finite.}
	Equation \eqref{contr2} yields, by sending $n \ti$ and using monotone convergence Theorem, that   $\bbE[S_\infty] < \infty$ which contradicts Proposition~\ref{bbb}.\\
\underline{Case II}.
Assume  that
$\overline{W}_{\infty}= - \infty,$  a.s..
The process $Z_{n} = W_{S_{n}} $, with $n\ge 0$, is a martingale. In this case we have that
\begin{equation}\label{inm}
\lim_{n\ti} Z_{n} = - \infty.
\end{equation}
For $n > N$ we have $Z_{n} - Z_{n-1} \le 0,  \mbox{a.s..}$
This implies   that for all  $n \ge 1$, we have
\begin{equation}\label{fre2}
\andre{ \bbE[\sup_{n} Z_{n}^{+}] } = \bbE\left[\sup_{n \le N} \Big(Z_{n}^{+}\Big)\right]  \le  1+ \bbE\left[\sup_{n \le N} \Big(Z_{n}^{+}\Big)^2\right] = 1 + E[\sup_{n \le N} (W_{S_n})^2] <\infty,
\end{equation}
where in the last step we used Lemma~\ref{doobc}.
Notice that, as $Z_{n}$ is a zero mean martingale, we have
$$\sup_{n} \bbE[|Z_{n}|] = 2 \sup_{n} \bbE[Z_{n}^{+}] < \infty.$$
Hence, the martingale convergence theorem implies that $\lim_{n \ti} Z_{n} = Z_{\infty}$ exists and is integrable. This contradicts \eqref{inm}.
\subsection{Proof of Theorem \ref{rubin} in the case $\langle B_{1}(m_n)\rangle \vee \langle B_{2}(m_n)\rangle = \infty$, eventually.}
\andre{Recall that w}e are assuming that both Assumption I and II hold and again reason by contradiction. In this case, to complete the proof of Theorem \ref{rubin} we suppose that  $\langle B_{1}(m_n)\rangle \vee \langle B_{2}(m_n)\rangle = \infty$ for all  but finitely many $n \in \N$,  where $m_n$ is the sequence described in Assumption II.
By taking a subsequence, we assume that $\langle B_{1}(m_n)\rangle \vee \langle B_{2}(m_n)\rangle = \infty$ for all   $n \in \N$.
  Hence, in virtue of Assumption 1, there exists an index  $j \in\{1,2\}$ such that $\langle B_{j}(m_n)\rangle = \infty$  for all  $n \in \N$. Without loss of generality, we suppose that $\langle B_{2}(m_1)\rangle = \infty$  for all $n \in \N$ while \andre{$\langle B_{1}(m_n)\rangle < \infty$}  for all $n \in \N$.  In order for Assumption II to hold, we have
\begin{equation}\label{in}
\langle \Tcal_2(m_n)\rangle = \infty, \qquad \bbP-\mbox{a.s.},
\end{equation}
for all  $n \in \N$. Fix $n$ such that \eqref{in} holds and $\langle B_{2}(m_n)\rangle = \infty$.
This implies that
$$ \sum_{k=1}^\infty \frac 1{f_2(u_2(k), n)} = \infty, \qquad \bbP-\mbox{a.s.},$$
\andre{where the  sequence $u_i(t) = u_i(t, n)$ was introduced in definition~\ref{dfab}.}
  We embed the GUP($\mathbf{M}, M_0 = m_n, f_1(\cdot, n), f_2(\cdot, n))$ into Brownian motion starting at 0, in the way we described in Section 2.3. Notice that we embed just one GUP.  In this embedding we choose  $a(k) = 1$ and $b(k) =2$ for all $k \in \N$. In this case, on $A_\infty$, the Brownian motion would hit $-\infty$ before hitting $\langle B_1(m_n) \rangle <\infty$, which gives a contradiction, as the Brownian motion is recurrent in one dimension.

\section{Proof of Theorem~\ref{cyclethm}}


From now on we fix $\ell \in [K]$ and assume that the assumptions of Theorem~\ref{cyclethm} hold.
Recall  the  Markov process
$ M_k = (\vec{X}_k, \vec{N}_k, \vec{T}_k)$, with  $ k \in \N_0$,   which takes values in the space $\S $ defined in Section~\ref{appl}.

 The next result establishes that the jumps from each vertex $j \in V$ of the $\ell$-process can be modelled using a  suitable GUP.
In order to simplify the notation, as  $\ell \in [K] =  \{1, 2, \ldots, K\}$ is fixed, we remove it from most of the notation used  throughout this section, with the exception of the process ${\bf X}^{\ssup \ell}$ itself.

\begin{lemma}\label{speGUP}
 Fix $j \in V$.  Consider $\Sc(m)$.
  Suppose that each time the process ${\bf X}^{\ssup \ell}$  jumps from $j$ to $j \ominus 1$ (resp.  to $j \oplus 1$)  we add one white ball (resp. one red ball) to a given urn.  This urn evolves like a GUP(${\bf \widetilde{M}}, \widetilde{M}_0,  f_1, f_2$), for some choice  of reinforcement processes $f_i (\cdot)$, $i\in \{1, 2\}$ and Markov process ${\bf \widetilde{M}}$ on $\S$. This representation is true up to the random time, possibly infinite, of the last visit of the process $\mathbf{X}^{\ssup \ell}$ to vertex $j$.
\end{lemma}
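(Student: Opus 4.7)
The plan is to construct the driving homogeneous Markov chain $\widetilde{\mathbf M}$ of the candidate GUP by sampling the ambient chain $M$ at successive returns of $\mathbf X^{\ssup \ell}$ to $j$, augmented by counters that record the urn composition. Define recursively the $(\Wcal_s)$-stopping times $\sigma(1) \Def \inf\{s \ge 0 \colon X^{\ssup \ell}_s = j\}$ and $\sigma(k+1) \Def \inf\{s > \sigma(k) \colon X^{\ssup \ell}_s = j\}$. These remain finite up to the last visit of $\mathbf X^{\ssup \ell}$ to $j$, which is the regime claimed in the lemma. Let $W_k$ (resp.\ $R_k$) be the number of indices $i \le k$ with $X^{\ssup \ell}_{\sigma(i)+1} = j \ominus 1$ (resp.\ $j \oplus 1$); these are exactly the numbers of white and red balls added to the urn after $k$ picks. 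Set
$$
\widetilde M_k \Def \big(M_{\sigma(k+1)},\, W_k,\, R_k\big), \qquad k \in \N_0,
$$
with values in the countable space $\widetilde\S \Def \S_j \times \N_0^2$, where $\S_j$ denotes the (countable) subset of $\S$ on which the $\ell$-th coordinate of $\vec X$ equals $j$.

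By the strong Markov property of $M$ applied at $\sigma(k+1)$, the conditional law of $(M_{\sigma(k+1)+s})_{s \ge 1}$ given $\Wcal_{\sigma(k+1)}$ depends only on $M_{\sigma(k+1)}$. The next sampled state $M_{\sigma(k+2)}$, together with the direction of the jump out of $j$ at time $\sigma(k+1)+1$ (which alone drives the update $(W_k,R_k) \mapsto (W_{k+1},R_{k+1})$), are measurable functions of this future path; hence the law of $\widetilde M_{k+1}$ depends on the past only through $\widetilde M_k$, and time-homogeneity is inherited from that of $M$. Choosing any $(\phi_1(0),\phi_2(0)) \in \N^2$ as the initial composition of the urn, set
$$
G_2(m,w,r) \Def (\phi_1(0)+w,\, \phi_2(0)+r), \qquad G_1(m,w,r) \Def \big(T_{\ell,j\ominus 1}(m),\, T_{\ell,j}(m)\big),
$$
where $T_{\ell,\cdot}(m)$ reads off the indicated coordinate of the $\vec T$-component of $m$. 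Then $\vec\phi(k) = G_2(\widetilde M_k)$ by the very definition of $W_k,R_k$, while $\vec f(k+1) \Def G_1(\widetilde M_k)$ matches \eqref{prob} with the event of picking a white ball, because the transition rule for $\mathbf X^{\ssup \ell}$ gives
$$
\frac{f_1(k+1)}{f_1(k+1)+f_2(k+1)} \Eq \frac{T_{\ell,j\ominus 1}(\sigma(k+1))}{T_{\ell,j}(\sigma(k+1))+T_{\ell,j\ominus 1}(\sigma(k+1))} \Eq \bbP\big(X^{\ssup \ell}_{\sigma(k+1)+1} = j \ominus 1 \,\big|\, \Wcal_{\sigma(k+1)}\big).
$$
Condition (B) of the GUP definition holds with $\eps = 1$, since by \eqref{defti}--\eqref{enej}, $T_{\ell,\cdot}(s) = (N_{\ell,\cdot}(s)+\Xi_{\ell,\cdot}(s))^\alpha \ge N_{\ell,\cdot}(0)^\alpha \ge 1$.

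The only real subtlety is the verification that the induced-and-augmented process $\widetilde{\mathbf M}$ is genuinely Markov and time-homogeneous; this reduces to a single application of the strong Markov property of $M$ at each stopping time $\sigma(k+1)$, combined with time-homogeneity. The restriction of the representation to times before the last visit of $\mathbf X^{\ssup \ell}$ to $j$ is automatic: beyond the last finite $\sigma(k)$ no further balls are added, so no further transitions of $\widetilde{\mathbf M}$ need to be defined.
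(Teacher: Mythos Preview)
Your approach is essentially the paper's: sample the ambient chain $M$ at successive visits of $\mathbf X^{\ssup\ell}$ to $j$ and read off the reinforcement functions from the $\vec T$-component. The paper sets $\widetilde M_k \Def M_{t(k+1)}$ directly on $\S$ and asserts that both $\vec f(k+1)$ and $\vec\phi(k)$ are functions of this state.

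The one genuine difference is your augmentation by the running urn counts $(W_k,R_k)$. This is in fact the more careful choice: the urn composition is \emph{not} always a deterministic function of $M_{t(k+1)}$ alone. On a square with $K=1$ and $\Xi\equiv 0$, an excursion from $j$ that bounces back through the same edge and one that wraps once around the cycle can leave identical edge-traversal counts $\vec N$ (hence identical $M_{t(k+1)}$) while depositing balls of opposite colour. So the paper's verification of condition~A) for $\vec\phi$ is, strictly speaking, a bit loose; your augmentation repairs this at the cost of having $\widetilde{\mathbf M}$ live on $\S_j\times\N_0^2$ rather than on $\S$ itself, a harmless deviation from the letter of the statement since everything downstream only uses countability of the state space and the reinforcement functions $f_1,f_2$, never the composition map $G_2$.
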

\begin{proof}
Set
$t(1, j) = \inf\{ u \colon u \in \N,\, X^{\ssup \ell}_u = j \}.$
Throughout this paper, the infimum over an empty set is $+\infty$.
Define recursively, for $ k \ge 2$,
$$ t(k, j) = \inf\{u \colon u > t(k -1, j),\, X^{\ssup \ell}_u = j\}. $$
In the remaining part of the proof of this lemma, we simplify the notation into $t(k)$, removing the reference to the vertex $j$, which is fixed.
Set $\widetilde{M}_k =   M_{t(k+1)}.$
Define the  processes
\begin{eqnarray*}
 f_1(k+1) &=&   T_{\ell, t(k+1)}({j}), \qquad f_2(k+1) = T_{\ell, t(k+1)}({j \ominus 1}).
\end{eqnarray*}
Next we show that these processes satisfy the  conditions listed in the definition of GUP.  In fact, referring to the definition of GUP, given in Section \ref{GUP},  we have the following.
\begin{itemize}
\item[A)] It is satisfied. The processes $f_i(k+1)$ and $\phi_i(k+1)$ (the latter is the composition of the urn described above, at time $k+1$), with $i\in \{1,2\}$,  are  determined by  $\widetilde{M}_k$.
\item[B)] $\S$ is  countable. This is a consequence of   our assumption on the family of random variables $\Xi_{\ell,j}(s)$.
\end{itemize}
The resulting GUP models the jumps  of ${\bf X}^{\ssup \ell}$ from vertex $j$ to its neighbors, up to the last visit of this process to $j$.
\hfill
\end{proof}

\begin{definition}
We denote the GUP described in Lemma~\ref{speGUP} by GUP($j, m$), where $m$ is the initial configuration of $\mathbf{M}$. Define $A_\infty(j)$ to be the event that  GUP($j, m$) picks infinitely many balls of each colour. We emphasize the fact that these objects depend on $\ell$. However, in our reasoning below, we decouple the behaviour of $\mathbf{X}^{\ssup \ell}$ from the other processes.
\end{definition}
For $m \in \S$ and $ j\in E$, $N_{\ell, j}(0)$ under the measure $\bbP_m$ is deterministic.  We denote using
$\pi_j(m) = \pi^{\ssup \ell}_j(m)$ the value of $N_{\ell, j}(0)$ under $\bbP_m$.
Define the event
$$ I  = I^{\ssup \ell} \Def \Big\{ \exists V' \subset V \colon X^{\ssup \ell}_n\in V'\mbox{ for all large $n$}\Big\}.$$
Notice that $V' \subset V$ implies that $V' \neq V$.
As $\mathbf{X}^{\ssup \ell}$ jumps to nearest neighbors, we can assume that $V'$ is a connected set.
Our aim is to prove that $I$ holds with probability 1, no matter what is the initial configuration in $\S$. More precisely we prove that $V'$ can be taken as the set  of  two adjacent vertices. Of course this pair of vertices is random.

Set $R_1$ to be  the smallest   even number larger than  both $R+2$ and $v$.
Define the  set $\Pi  = \Pi^{\ssup \ell}$ as follows.
$$
\begin{aligned}
\Pi \Def\Big\{\widetilde{m}\colon  \exists j \in E  \mbox{ such that }&\pi_{j}(\widetilde{m}) + R_1  \le  \pi_{j \ominus 1}(\widetilde{m}) \wedge  \pi_{j \oplus 1}(\widetilde{m}) \mbox{ and } \pi_{j \oplus 1} (\widetilde{m})\neq \pi_{j \ominus 1}(\widetilde{m})
\Big\}.
\end{aligned}
$$
\begin{lemma}\label{impin} Consider $\Sc(m)$, for $m \in \S$.  We have
\begin{equation}
  \bbP_m \Big( I \cup \big\{M_t \in \Pi, \mbox{i.o.}\}\Big) =1.
  \end{equation}
\end{lemma}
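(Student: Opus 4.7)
The aim is to show that on the event $I^c$, the chain $\{M_t\}$ visits $\Pi$ infinitely often, almost surely. The starting point is the observation that on $I^c$, every vertex of $V$ is visited by $\mathbf{X}^{\ssup \ell}$ infinitely often: if some vertex $v_0 \in V$ were visited only finitely often, then for all large $n$ we would have $X^{\ssup \ell}_n \in V \setminus \{v_0\}$, a proper connected subset of $V$, contradicting $I^c$. Hence every edge of $E$ is traversed infinitely often, and $N_{\ell, j}(t) \to \infty$ for each $j \in E$, almost surely on $I^c$.

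The core of the plan is a two-part construction showing that infinitely often some edge $j$ satisfies both (i) $N_{\ell, j}(t) + R_1 \le N_{\ell, j\ominus 1}(t) \wedge N_{\ell, j\oplus 1}(t)$ and (ii) $N_{\ell, j \ominus 1}(t) \ne N_{\ell, j \oplus 1}(t)$. For (i), I would track $j^*(t)$, an edge realizing $\min_{j} N_{\ell, j}(t)$, at times immediately after $j^*$ is traversed. Between consecutive traversals of $j^*$, its count is frozen while the other $v-1$ edge counts may accrue increments; on $I^c$ the walk continues to wander over the whole cycle, so the total increment to the other counts between such events is unbounded, and one argues that the gap to the two neighbors of $j^*$ must eventually exceed $R_1$ infinitely often. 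The contrary scenario, in which all $v$ counts stay within a bounded window forever, would force essentially cyclic motion of the walk; this is ruled out probabilistically using the assumption $\Xi_{\ell, j}(s) \le R$, under which the transition probabilities at each vertex are bounded away from $0$ and $1$, so by a Borel--Cantelli argument the walk performs infinitely many ``reversals'' breaking any would-be balance.

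For (ii), observe that the difference $D_j(s) := N_{\ell, j \ominus 1}(s) - N_{\ell, j \oplus 1}(s)$ changes by exactly $\pm 1$ each time edge $j \ominus 1$ or $j \oplus 1$ is traversed, and is constant otherwise. At a time $t$ where condition (i) holds with slack (the buffer $R_1 \ge R+2$ supplied by the definition affords at least two extra units of room), I would consider a short window around $t$ during which edge $j$ is not traversed, so that (i) is preserved; within this window the walk must traverse one of the edges $j \ominus 1$ or $j \oplus 1$, which flips the parity of $D_j$ and guarantees $D_j \ne 0$ at some moment, giving $M_t \in \Pi$ there. Iterating at arbitrarily large times then yields $\{M_t \in \Pi\}$ infinitely often. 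The hardest part of the plan is the probabilistic argument sketched for (i): formally ruling out the perfectly cyclic, perfectly balanced trajectory on $I^c$; this is precisely where the uniform bound $\Xi \le R$ enters, providing the two-sided control of the transition probabilities required to exclude such pathology.
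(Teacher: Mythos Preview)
Your approach differs substantially from the paper's, and there is a genuine gap in part (i) of your plan that is not easy to repair.

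The paper does not track the minimum edge. Instead it performs a case analysis on configurations $\widetilde m\notin\Pi$: in the ``all gaps small'' case it exhibits an explicit path of bounded length (depending only on $R_1$) that drives $M$ into $\Pi$ with probability bounded below by a positive constant, uniformly in $\widetilde m$; in the remaining cases it localises the analysis to a maximal block $A^*$ of consecutive edges whose successive counts differ by at most $R_1$. The delicate subcase is when $A^*$ consists of a single edge and there is no second block $D$: here two adjacent edges $j,j\oplus1$ have nearly equal small counts while $j\ominus1$ and $j\oplus2$ carry much larger counts (the set $\mathcal X\cap\mathcal Y^c$). For this configuration the paper cannot argue combinatorially; it builds an auxiliary GUP and invokes Theorem~\ref{rubin} to force the chain out of $\mathcal X\cap\mathcal Y^c$. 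The uniform lower bound across cases then feeds a Markov/Borel--Cantelli argument.

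Your sketch for (i) does not handle this subcase. If the current minimum is $j$ and $j\oplus1$ has count within $R_1$ of it, then the $\Pi$-condition for $j$ fails on the $j\oplus1$ side. Between successive traversals of $j$ the walk lives on the path obtained by deleting $j$, but it is not forced to traverse $j\oplus1$ much more than $j$: from vertex $j\oplus1$ the two incident edges have comparable weights, while from vertices $j$ and $j\oplus2$ the walk is strongly pulled onto the large edges $j\ominus1$ and $j\oplus2$. There is no evident mechanism by which $N_{\ell,j}$ and $N_{\ell,j\oplus1}$ must separate by $R_1$. Your proposed fix --- transition probabilities bounded away from $0$ and $1$ imply ``reversals'' that ``break the balance'' --- is circular: the very thing to prove is that two adjacent edge counts do not stay within a bounded window forever, and the existence of infinitely many direction reversals does not by itself imply that. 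You yourself flag this as the hard part, and indeed it is precisely where the paper needs the full strength of Theorem~\ref{rubin}. Absent an alternative argument of comparable power for this configuration, the plan as written does not go through.
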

\begin{proof}
Consider $\Sc(m)$.   Throughout this proof, the constant  $ "c"$ stands for a generic positive constant which depends on \andre{$R_1$} but does not depend on the initial state $\widetilde{m}$ of the Markov chain $\mathbf{M}$, with $\widetilde{m} \notin \Pi$.

Next, for each configuration $\widetilde{m} \notin \Pi$,
we distinguish two main cases.
\begin{enumerate}
\item Suppose   $M_{0} =\widetilde{m}$ satisfies
\begin{equation}\label{mtil1}
|\pi_{j}(\widetilde{m})  - \pi_{j \ominus 1}(\widetilde{m} )| \le R_1, \qquad  \mbox{for all  $j \in E$}.
\end{equation}
  We prove, for this case, that
\begin{equation}\label{flowB}
  \bbP_{\widetilde{m}}\Big(M_{5R_1  +4}\in \Pi\Big) > c,
  \end{equation}
where recall that $c>0$ is a constant independent of $\widetilde{m}$.
Our strategy to prove \eqref{flowB} is to prove the existence of a path of length $5 R_1  +4$, with probability at least $c$, which makes $M_{5  R_1  +4 } \in \Pi$. We describe the path as follows. Consider the largest between $\pi_{X_0^{\ssup \ell}\ominus 1}(\widetilde{m})$ and $\pi_{X_0^{\ssup \ell} }(\widetilde{m})$. Suppose the latter is the largest. It is easy to  adapt the following argument to the other case. Define the event
$$ B_1 \Def \{X_s^{\ssup \ell} \in \{ X_0^{\ssup \ell},  X_0^{\ssup \ell} \oplus 1\} \mbox{ for all  } s \in 1, 2, \ldots,  3 R_1 +2\}.$$
Notice, that as $R_1$ is even, on the event $B_1$ we have that $X^{\ssup \ell}_{3R_1+2} = X^{\ssup \ell}_0$.
Define $B_2 = \{X^{\ssup \ell}_{3 R_1 + 3} = X^{\ssup \ell}_{0} \ominus 1\}$. Finally let
$$ B_3 \Def \{X_s^{\ssup \ell} \in \{ X_0^{\ssup \ell} \ominus 2,  X_0^{\ssup \ell} \ominus 1 \} \mbox{ for all  } s \in 3 R_1 + 3, 3 R_1 + 4, \ldots,  5  R_1 +4\}.$$
Notice that on $B_1 \cap B_2 \cap B_3$ we have that $M_{ 5 R_1  +4} \in \Pi$, and  that we use $3R_1 +2$ in the definition of $B_1$ in such a way that the  condition $\pi_{j \oplus 1} (\widetilde{m})\neq \pi_{j \ominus 1}(\widetilde{m})$, appearing in the definition of $\Pi$, is satisfied. Next, we bound from below $\bbP_{\widetilde{m}}(B_1 \cap B_2 \cap B_3)$ when $\widetilde{m}$ satisfies \eqref{mtil1}.
As we already pointed out, the event $B_1 \cap B_2 \cap B_3$ depends on the first $5 R_1 +4$ consecutive jumps of $\mathbf{X}^{\ssup \ell}$.  During the first $ 5 R_1 +4$ steps, the difference between adjacent  weights can be at most $6  R_1 +4$, which is a rough upper bound.
This is because $\widetilde{m}$ satisfies \eqref{mtil1}. Hence, each path of length $ 5 R_1 +4$ has a probability of at least
\begin{equation}\label{runoc}
\left( \frac{1}{1 + (1+6 R_1+4+R)^\alpha} \right)^{5 R_1 +4}.
  \end{equation}
 \andre{It follows from   the fact that the function }
 $$ x \mapsto \frac{x^\alpha}{x^\alpha + (x+ 6 R_1+4+R)^\alpha}$$ is monotone increasing, for $x \in [1, \infty)$.
\item  Suppose that $\widetilde{m} \notin \Pi$ satisfies
\begin{equation}\label{msat}
 \max_{j \andre{\in E}} |\pi_{j}(\widetilde{m})  - \pi_{j \ominus 1}(\widetilde{m})\vee \pi_{j \oplus 1}(\widetilde{m})| > R_1.
\end{equation}
Notice that \eqref{msat}, together with $\widetilde{m} \notin \Pi$ and the fact the underlying graph is a polygon,  implies that
\begin{equation}\label{emptj}
 \Big\{j  \in E\colon |\pi_{j}(\widetilde{m})  - \pi_{j \oplus 1}(\widetilde{m})| \le  R_1\Big\} \neq \emptyset.
\end{equation}
\andre{To prove \eqref{emptj}, it is enough to reason by contradiction. If \eqref{emptj} does not hold, then the minimizer $j^*$  of $j \mapsto \pi_{j}(\widetilde{m})$ would satisfy  $\pi_{j^*}(\widetilde{m}) +R_1 \le \pi_{j^* \oplus 1}(\widetilde{m}) \vee \pi_{j^* \ominus 1}(\widetilde{m})$, which contradicts  $\widetilde{m} \notin \Pi$.
}

Consider $A^*\subset V$ to be one among the  largest  non-empty set of consecutive edges  with the following property. If $j \in A^*$, then
$$  |\pi_{j}(\widetilde{m})  - \pi_{j \oplus 1}(\widetilde{m})| \le  R_1.$$
In virtue of \eqref{msat}, we have $A^*\neq V$.
Denote by $D$ the  second largest subset of consecutive edges, disjoint from $A^*$, with the properties described above. Notice that $D$ could be an empty set. On the opposite, $D$ could have the same size as $A^*$. \\
Suppose that $A^*$ consists of exactly $h<v$ edges, with $h \ge 1$, say $A^*= \{a, a \oplus 1,  a \oplus 2, \ldots, a \oplus (h-1)\}$. To simplify the notation, we assume that $A^*= \{0, 1, 2, \ldots, h-1\}$ and leave to the reader the simple task to adapt the following reasoning to the general $A^*$.
Notice that in what follows, to simplify the reasoning, we ignore the  second condition appearing in  the definition of $\Pi$, i.e.  $\pi_{\kappa \oplus 1} (\widetilde{m})\neq \pi_{\kappa \ominus 1}(\widetilde{m})$. This would affect the path by just one step (stop either one step earlier or one step later).
  \\
 First, \underline{ suppose that  $h \ge 2$.}
We first deal with the case where the   probability  of the event
\begin{equation}\label{buu}
  \bbP_m(\{\mbox{$\mathbf{X}^{\ssup \ell}$  hits $1$ before $h\andre{-1}$}\} \cup I) \ge 1/2,
  \end{equation}
 Set $B_4 \Def \{\mbox{$\mathbf{X}^{\ssup \ell}$  hits vertex $1$ before $h-1$}\} \cup I.$
 Recall that $t_1 = t(1,1)$ is  the hitting time of vertex $1$ \andre{by the process $\mathbf{X}^{\ssup \ell}$.}
 We have
 \begin{equation}\label{stanc}
 \bbP_{\widetilde{m}}(\{{X}^{\ssup \ell}_k  \in \{1, 2\} \mbox{ for all $t_1 \le   k \le  t_1 + 2R_1+1$}\}\cap B_4) \ge \frac 12\left(\frac{1}{2 + 3R_1+1+ R}\right)^{2R_1+1},
 \end{equation}
yielding the result for this case. \andre{In fact, on the event appearing in the probability in the right-hand side of \eqref{stanc}, we have }
$$\andre{\pi_0(M_s) +R_1  \le  \pi_{m-1}(M_s)\wedge \pi_{1}(M_s),}$$
\andre{for $s \in \{t_1 + 2R_1, t_1 + 2R_1+1\}$. As for the requirement $\pi_{m-1}(M_s) \neq \pi_{1}(M_s)$, as discussed above,  it holds for at least one element of the set $\{M_{t_1 + 2R_1}, M_{t_1 + 2R_1+1}\}$.}

When \eqref{buu} does not hold, replace $\{1, 2\}$ with $\{h-2, h-1\}$ in the left-hand side of \eqref{stanc}.\\
\underline{Next, suppose that $h = 1$.}
We distinguish two further subcases. The number of edges in $D$ can be either $0$ or $1$. 
 First consider the case when $D$ has exactly one edge, which we  denote by $d$.
Assume  that under  the initial configuration $\widetilde{m}$ we have $1 \le X_0^{\ssup \ell} \le d$.
 We can adapt easily the following  argument to the other case.
Let $U = \{1, d+1\}$.  Let $s_1 = \inf\{t \ge 0 \colon X_t^{\ssup \ell}  \in U\}$. Define recursively $s_k = \inf\{t > s_{k-1} \colon X_t^{\ssup \ell}  \in U\}$.
Set $\widetilde{U} = \{2, d\}$.
Notice that
\begin{equation}
\bbP_{\widetilde{m}}(\{\mbox{$s_k <\infty$ and ${X}^{\ssup \ell}_{s_k +1} \in \widetilde{U}$  for all $k \le 4R_1$}\} \cup I) \ge  \left(\frac{1}{1 +(1+R)^\alpha}\right)^{4R_1}.
\end{equation}
 Notice that on the event inside the previous probability, we have that  either $I$ holds or $\mathbf{M}$ hits $\Pi$.\\
 \underline {Finally consider the case when $D$ is empty,}  and $A^*$ consists of exactly one edge .  Set
 $$
 \begin{aligned}
 \mathcal{X} \Def \{m \in \S \colon &\mbox{$A^* $ consists of  exactly one edge  and   $D$ is empty}\}
 \end{aligned}
 $$
Again, we distinguish two cases. Define 
\begin{equation}\label{belleve}
\begin{aligned}
 \mathcal{Y}\Def \{m \colon\quad &\exists j \in E  \quad \mbox{such that } j \in A^* \mbox{ and either }  \pi_{j\ominus1 }(m)  \le K (\pi_j(m) \vee R_1)  \mbox{ or }   \\
& \pi_{j\oplus2 }(m)  \le K (\pi_{j \oplus 1}(m) \vee R_1), \mbox{ or both}\},
\end{aligned}
 \end{equation}
where the constant $K$ is chosen to be large enough, as specified later.
Suppose that $m \in \mathcal{X}\cup \mathcal{Y}$.
 Suppose that $  \pi_{j\ominus1 }(m)  \le K (\pi_j(m) \vee R_1)$. Then,
 \begin{equation}\label{misabb}
 \bbP_m\Big(I \cup \Big\{\{t_j<\infty\} \cap \big\{X_k \in  \{j, j \oplus 1\} \mbox{ for all } k \in [t_j,  t_j +2R_1+1]\big\}\Big\}\Big) \ge  \left(\frac 1{1+ (K+2R_1+R)^\alpha}\right)^{2R_1+1}.
 \end{equation}
 If the  event  appearing inside the probability in  \eqref{misabb} holds, either $I$ holds or the process $\mathbf{M}$ hits $\Pi$. A similar result holds when $\pi_{j\oplus2 }(m)  \le K (\pi_{j \oplus 1}(m) \vee R_1)$.\\
 Next, we consider the case  when the initial state $m \in \mathcal{X} \cap \mathcal{Y}^c$. We show that either $I$ holds, or there exists an a.s. finite time $t$, under $\bbP_m$,  such that  $M_t \in \mathcal{X}^c \cup \mathcal{Y}$. It means that either $I$ holds or we can use results from other cases to infer that $\mathbf{M}$ hits $\Pi$.  We reason by contradiction. For each $j\in E$, define the following event 
$$ \mathcal{A}(j) = \bigcap_{k=0}^\infty\{N_{j\ominus 1}(k)> K(N_j(k)\vee R_1)\}\cap \{ N_{j \oplus 2}(k)> K(N_{j \oplus 1}(k) \vee R_1)\} \cap \{|N_j(k) -N_{j \oplus 1}(k)| \le R_1\}.$$ 
 We have that $\{M_k \in   \mathcal{X} \cap \mathcal{Y}^c, \forall k \in \N_0\} \subset \bigcup_{j \in E} \mathcal{A}(j).$
 Suppose $\bbP_m(\mathcal{A}(j))>0$ for some $j \in E$.   Consider the following urn.
   Suppose that each time that ${\bf X}^{\ssup \ell}$  jumps from vertex $j$ to $j\ominus1$ or from  $j \oplus 2$ to $j \oplus 3$  we add one \andre{red} ball  and each time it jumps either from vertex 
$j$ to $j \oplus 1$ or from $j \oplus 2 $ to $j \oplus 1$ we add one \andre{white} ball. \andre{We show that } this urn evolves like a
GUP(${\bf M'}, M'_0 = m,  f_1, f_2$), for some choice  of reinforcement processes $f_i (\cdot)$, $i\in \{1, 2\}$ and Markov process ${\bf M'}$ on $\S$. To see this, define
$$ s_1 \Def \inf\{u \colon u \ge 0, \mbox{either } X^{\ssup \ell}_u = j \mbox{ or } X^{\ssup \ell}_u = j \oplus 2\}.$$
Define recursively, for $k \ge 2$,
$$ s_k \Def \inf\{u \colon u > s_{k -1}, \mbox{either } X^{\ssup \ell}_u = j \mbox{ or } X^{\ssup \ell}_u =  j \oplus 2\}. $$
 Define  the stochastic processes
\begin{eqnarray*}
 f_1(k+1) &\Def& T_{j} (s_k)\1_{X^{\ssup \ell}_{s_k} =j} + T_{j \oplus 1} (s_k)\1_{X^{\ssup \ell}_{s_k} = j \oplus 2},\\
 f_2(k+1) &\Def& T_{j \ominus 1} (s_k)\1_{X^{\ssup \ell}_{s_k} = j} + T_{j \oplus 2} (s_k)\1_{X^{\ssup \ell}_{s_k} = j \oplus 2}.
\end{eqnarray*}
where $T_j = T_{\ell, j}$.
Let $M'_{k} \Def M_{s_{k+1}}$, for $k\in \N_0$. It is trivial to check that A) and B) from the definition of GUP are both satisfied.  For $\mathcal{A}(j)$ to hold, either $I$ must hold or infinitely many balls of each color must be picked.
Suppose that $\bP_m(\mathcal{A}(j) \cap A_\infty)>0$, where, as before, $A_\infty$ is the event that infinitely many balls of each color are picked in this urn. In virtue of the proof of Proposition~\ref{prap}, we can find a sequence $m^*_n$ of configurations in $\S$, such that 
\begin{equation}\label{condmsta}
\begin{aligned}
 \{ \pi_{j \ominus 1}(m^*_n)> K (\pi_j(m^*_n) \vee R_1)\}&\cap \{\pi_{ j \oplus 2}(m^*_n)> K (\pi_{j \oplus 1}(m^*_n)\vee R_1)\}\\
&\cap \{|\pi_j(m^*_n) - \pi_{j \oplus 1}(m^*_n)| \le R_1\},
\end{aligned}
\end{equation}
and $\lim_n\bP_{m^*_n}(A_\infty)  =1$.  Next we prove that this GUP would satisfy both Assumption I and II, yielding a contraddiction.
 As for Assumption I), it is simple to check, using \eqref{defti}, that
 $$\langle B_{i}(\andre{m^*_{n}})\rangle \le \sum_{k=1}^\infty k^{- \alpha}<\infty,$$ for $i \in  \{1, 2\}$.\\
  As for Assumption II) i), we can definitely choose     $m^*_n$ (by considering subsequences) in such a way that it  satisfies
  $$ \sum_{n=1}^\infty 1- \bbP_{m^*_n}(\mathcal{A}(j) \cap  A_\infty) < \infty.$$
 There exists random time $N^*$ such that if $n \ge N^*$ then $\mathcal{A}(j)\cap   A_\infty$ holds $\bbP_{m^*_n}$-a.s..
 We turn to Assumption II ii).  We have
 $$
 \langle B_{1}(m^*_n)\rangle \le \sum_{s=\pi_j(m^*_n)\wedge \pi_{j \oplus 1}(m^*_n)}^{\infty} \frac 1{s^\alpha}
 \le \frac 1{\alpha-1} (\pi_j(m^*_n)\wedge \pi_{j \oplus 1}(m^*_n) - 1)^{1 - \alpha}.
$$
Whenever a  red ball is picked from GUP$(\mathbf{M'}, M'_1 = m^*_n, f_1, f_2)$, the following happens. The process $\mathbf{X}^{\ssup \ell}$ either \andre{jumps} from $j$  to  $j \ominus 1$ or from $j \oplus 2$  to $j \oplus 3$. Before returning to vertex $j$ or  $j \oplus 2$, it must cross again either edge $j \oplus 2$  or edge $j \ominus 1$. 
 In the evaluation of $\langle\Tcal_2(m^*_n)\rangle$ and $ \langle B_{2}(m^*_n)\rangle$   we don't count the transition weights used to jump from $j \oplus 3$ to $j \oplus 2$ and  the ones used to jump from $j \ominus 1$ to $j$. In the evaluation of $\langle \Tcal_1(m^*_n)\rangle$ and $\langle B_{1}(m^*_n)\rangle$ we don't count the transition weights used  to jump from either $j \oplus 1$ to $j$ or from $j \oplus 1$  to $j \oplus 2$. 
Recall that for this GUP we have that the initial state satisfies  $\pi_{j \ominus 1}(m^*_n) \wedge \pi_{j \oplus 2}(m^*_n) \ge (K-1)(\pi_j(m^*_n)\vee \pi_{j \oplus 1}(m^*_n) \vee R_1)$, due to \eqref{condmsta}. Recall that $\alpha >1$. We have
$$
 \langle B_{2}(m^*_n)\rangle \le
 \sum_{s =\pi_{j \ominus 1}(m^*_n) \wedge \pi_{j \oplus 2}(m^*_n)}  \frac 1{s^\alpha}
\le \frac 1{\alpha-1} (\pi_j(m^*_n) \wedge \pi_{j \oplus 1}(m^*_n) - 1)^{1 - \alpha},
$$
notice that on $I^c$,
$$ \langle \Tcal_1(m^*_n)\rangle  \ge    \sum_{s = \pi_j(m^*_n)\vee \pi_{ j\oplus 1}(m^*_n) +1 }  \frac 1{(2s +1+R)^{2\alpha}} \ge   \frac 1{2(2 \alpha -1)} (2\pi_j(m^*_n)\vee \pi_{j \oplus 1}(m^*_n)  + R+2) ^{1 - 2 \alpha}. $$

Finally $\eta_{n} = (\pi_j(m^*_n) \wedge \pi_{j \oplus 1}(m^*_n)+1)^\alpha.$ Hence Assumption II ii) is satisfied on $I^c$.
We turn to Assumption II iii). Denote by $f^{\ssup n}_i(k)$, with $i\in \{1, 2\}$, the reinforcement process for this GUP with initial condition $m^*_n$. We associate $f^{\ssup n}_1$ to the edges $j$ and  $j \oplus 1$, while $f^{\ssup n}_2$ is associated to edges $j \ominus 1$ and $j \oplus 2$.   For $n \ge N^*$, we have
$$
\begin{aligned}
\sum_{k=1}^\infty \frac 1{f^{\ssup n}_1(k)}  \1_{A_{\infty}(n)} &\ge  \sum_{s = \pi_j(m^*_n)\vee \pi_{j \oplus 1}(m^*_n)+1  }  \frac 1{(2s +1+R)^{\alpha}} \1_{A_{\infty}(n)}\\
 &\ge 
   \frac 1{2( \alpha -1)} (2\pi_j(m^*_n)\vee\pi_{j \oplus 1}(m^*_n)  + R+1) ^{1 -  \alpha}\1_{A_{\infty}(n)}
 \end{aligned}  
   $$
whereas 
$$
\begin{aligned}
\sum_{k=1}^\infty \frac 1{f^{\ssup n}_2(k)}  \1_{A_{\infty}(n)} &\le \sum_{s = \pi_{j\ominus 1}(m^*_n)\wedge \pi_{j \oplus 2}(m^*_n) }  \frac 1{s^{\alpha}} \1_{A_{\infty}(n)}\le  \frac 1{\alpha -1} (\pi_{j\ominus 1}(m^*_n)\wedge \pi_{j \oplus 2}(m^*_n)    -1) ^{1 - \alpha}\1_{A_{\infty}(n)}\\
&\le \frac 1{\alpha -1} ((K-1)\pi_{j}(m^*_n)\vee \pi_{j \oplus1}(m^*_n)    -1) ^{1 - \alpha}\1_{A_{\infty}(n)}\\
&\le \frac 1{2(\alpha -1)} (\pi_{j}(m^*_n)\vee \pi_{j \oplus1}(m^*_n) +R +1 ) ^{1 - \alpha}\1_{A_{\infty}(n)},
\end{aligned}
$$
where the last inequality holds for $K$ large enough.  
 Hence, Assumption II iii) holds with  the choice $\andre{b_n(k)} = 1$ for all $k, n \in \N$. Using Theorem~\ref{rubin}, we infer that only finite many red or white balls are picked. This contradicts $\bbP_m(I^c \cap\mathcal{A}(j))>0$. 
\end{enumerate}
Next we combine the two cases \andr{described above, to get our result}.  Notice that in each of the  cases we considered, there is a positive probability  that either $\mathbf{M}$ hits a configuration in $\Pi$ within an  a.s. finite time on $I^c$  or  the event I holds. Denote by $t^*$ the hitting time $\mathcal{X}^c \cup \mathcal{Y}$ by the process $\mathbf{M}$.
\andre{In the reasoning above, we sometimes simplified the notation and fixed $A^*$ to be a particular set of edges. For this reason time $t_1$  appeared in one of the estimates. More generally, $ \max_{i \in V} t(1, i)  + v+ 5R + 4 $ is an apper bound for each of the times listed above, with the exception of the case when the initial configuration is in $\mathcal{X} \cap \mathcal{Y}^c$.  
Set $E_0 = 0$ and }
$$ \andre{E_k \Def (\max_{i \in V}t(1, i) \circ  t^* \circ E_{k-1}) + v+ 5R + 4,}$$
where $t(1, i) \circ  t^* \circ E_{k-1}$ is the hitting time of $i$ after the first hitting time of $\mathcal{X}^c \cup \mathcal{Y} $ which happens by time $E_{k-1}$ on.
\andre{Notice that $\{E_k = \infty\} \subset I$. }
Finally, notice that as $\widetilde{m}\notin \Pi$ in our reasoning above was arbitrary, we have
$$ \andre{\bbP_m \left(\{\exists s \in [E_{k-1}+1,  E_{k}] \colon M_s \in \Pi \} \cup I \;|\; M_s \notin \Pi \; \forall s \le  E_{k-1} \right) \ge c,}$$
where $c >0$. \andre{Using the strong Markov property and the Borel Cantelli Lemma, we have our result.}
\hfill \end{proof}

%
For $m\in \Pi$,  define $\iota(m)$ one of the  indices $k$, chosen uniformly at random among the ones  which satisfy
 \begin{equation}\label{iota}
 \pi_k(m) +R_1 \le \pi_{k \ominus 1}(m) \andre{\wedge} \pi_{k \oplus 1}(m), \mbox{ and } \quad\pi_{j \oplus 1} (\widetilde{m})\neq \pi_{j \ominus 1}(\widetilde{m}).
 \end{equation}
   If $m \notin \Pi$,  set $ \iota(m) = \infty$.
Let the event
$$
\begin{aligned} U \Def \{\iota(M_0) < \infty\} \cap \{X^{\ssup \ell}_{k} &= \iota(M_0), X^{\ssup \ell}_{k+1} = \iota(M_0)  \oplus 1 \;\; \mbox{i.o.}\}\cap\\ &\cap \{X^{\ssup \ell}_k = \iota(M_0), X^{\ssup \ell}_{k+1} =\iota(M_0)\ominus 1 \;\; \mbox{i.o.}\}.
\end{aligned}
$$

\begin{lemma}\label{zeronu}
$
\sup_{m \in \Pi} \bbP_{m}(U) \in \{0, 1\}.
$
\end{lemma}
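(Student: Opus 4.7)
The plan is to deduce the $0$-$1$ law for $\sup_{m \in \Pi}\bbP_m(U)$ from the general $0$-$1$ law for tail events of $\mathbf{M}$ given by Proposition~\ref{prap} in the Appendix --- the same tool underpinning Proposition~\ref{ref01}. The event $U$ is not itself a tail event of $\mathbf{M}$, because of its explicit dependence on the initial random choice $\iota(M_0)$; however, once this choice is disintegrated out, what remains is a genuine tail event to which the Appendix result directly applies.

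For each $j \in V$, introduce the tail event
$$W_j \Def \{X^{\ssup \ell}_k = j,\ X^{\ssup \ell}_{k+1} = j \oplus 1 \text{ i.o.}\} \cap \{X^{\ssup \ell}_k = j,\ X^{\ssup \ell}_{k+1} = j \ominus 1 \text{ i.o.}\}.$$
Each $W_j$ is invariant under finite shifts of the trajectory of $\mathbf{M}$, so Proposition~\ref{prap} gives $\sup_{m \in \S}\bbP_m(W_j) \in \{0,1\}$ for every $j$. Since $\iota(M_0)$ is drawn uniformly from the finite non-empty set $J(m) \subset V$ of indices satisfying \eqref{iota}, and is independent of the future evolution of $\mathbf{M}$, one has for every $m \in \Pi$
$$\bbP_m(U) = \frac{1}{|J(m)|}\sum_{j \in J(m)} \bbP_m(W_j).$$
Consequently, if $\sup_{m \in \Pi}\bbP_m(U) > 0$ then there exist $m_0 \in \Pi$ and $j^{*} \in J(m_0)$ with $\bbP_{m_0}(W_{j^{*}}) > 0$, and the $0$-$1$ law above forces $\sup_{m \in \S}\bbP_m(W_{j^{*}}) = 1$.

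To finish, fix $\varepsilon > 0$ and pick $\widehat m \in \S$ with $\bbP_{\widehat m}(W_{j^{*}}) \ge 1 - \varepsilon$. By the strong Markov property combined with an argument in the spirit of Lemma~\ref{impin}, the chain started from $\widehat m$ almost surely reaches some state $m^{*} \in \Pi$ with $J(m^{*}) = \{j^{*}\}$ on the event $W_{j^{*}}$: indeed, by relabelling the initial configuration's weights at the edges $j^{*} \oplus 1$ and $j^{*} \ominus 1$ upward by a small fixed amount, $j^{*}$ is singled out as the unique valley. On that event the post-hitting trajectory still satisfies $W_{j^{*}}$ with probability at least $1 - \varepsilon$, and since $J(m^{*}) = \{j^{*}\}$ yields $\bbP_{m^{*}}(U) = \bbP_{m^{*}}(W_{j^{*}})$, we obtain $\sup_{m \in \Pi}\bbP_m(U) \ge 1 - \varepsilon$; letting $\varepsilon \to 0$ concludes the proof. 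The main obstacle is this final transfer step: one must show that the hitting time of a state in $\Pi$ with valley index exactly $j^{*}$ is finite with high probability on $W_{j^{*}}$, which requires carefully comparing the weight profiles across all edges rather than merely invoking generic recurrence to $\Pi$.
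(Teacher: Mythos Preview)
The paper disposes of this lemma in two sentences: it applies Proposition~\ref{prap} to $U$ over all of $\Sigma$ to get $\sup_{m\in\Sigma}\bbP_m(U)\in\{0,1\}$, and then notes that $\bbP_m(U)=0$ for every $m\notin\Pi$ (since $\iota(M_0)=\infty$ there, so the event $\{\iota(M_0)<\infty\}$ in the definition of $U$ fails), whence the supremum over $\Pi$ agrees with the supremum over $\Sigma$. There is no disintegration over $\iota(M_0)$ and no transfer step.

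Your worry that $U$ may not lie in $\sigma(\mathbf{M})$ because of the external uniform randomization is legitimate, and the paper does not discuss it explicitly; presumably the extra coin toss is tacitly absorbed into the probability space on which Proposition~\ref{prap} is invoked. But your attempt to route around this subtlety trades it for a harder problem, and the resulting argument has the gap you yourself flag. The final transfer step---locating, on $W_{j^*}$, a state $m^*\in\Pi$ with $J(m^*)=\{j^*\}$ and $\bbP_{m^*}(W_{j^*})$ close to $1$---is not established. Lemma~\ref{impin} gives recurrence to $\Pi$ on $I^c$ but says nothing about \emph{which} edge is the valley or whether it is unique; your remark about ``relabelling the initial configuration's weights \ldots upward by a small fixed amount'' is not an admissible operation, since states in $\Sigma$ are produced by the dynamics and cannot be edited by hand. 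Nor does $W_{j^*}$ by itself force the weight profile near $j^*$ into the shape required for $j^*$ to be the unique element of $J(m^*)$. Filling this in would need an argument at least as delicate as Lemma~\ref{impin}, so the detour costs more than it saves; the paper's one-line reduction via $\bbP_m(U)=0$ off $\Pi$ is the intended shortcut.
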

\begin{proof}
The following is a consequence  of Proposition~\ref{prap} in the Appendix,
$$
\sup_{m \in \S} \bbP_{m}(U) \in \{0, 1\}.
$$
As $\bbP_{m}(U) = 0$ if $m\notin \Pi$, we are done.
%
\hfill
\end{proof}

\begin{lemma}\label{lemap}
For any $m \in \S $ we have  $\bbP_{m}(U) = 0$.
\end{lemma}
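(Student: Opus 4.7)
The plan is to reason by contradiction and apply Theorem~\ref{rubin} to the GUP at vertex $\iota(M_0)$ produced by Lemma~\ref{speGUP}. Suppose $\bbP_{m^*}(U) > 0$ for some $m^* \in \S$; Lemma~\ref{zeronu} then forces $\sup_{m \in \Pi} \bbP_m(U) = 1$, so I extract a sequence $m_n \in \Pi$ with $\alpha(m_n) := \bbP_{m_n}(U) \to 1$ and, by taking a subsequence, $\sum_n (1 - \alpha(m_n)) < \infty$. Since $E$ is finite, a further subsequence lets me assume $\iota(m_n) = v_0$ is constant; the definition of $\Pi$ then guarantees $\pi_{v_0}(m_n) + R_1 \le \pi_{v_0 \ominus 1}(m_n) \wedge \pi_{v_0 \oplus 1}(m_n)$ and $\pi_{v_0 \oplus 1}(m_n) \ne \pi_{v_0 \ominus 1}(m_n)$ for every $n$, and by one more subsequence I fix the ordering of $\pi_{v_0 \ominus 1}(m_n)$ versus $\pi_{v_0 \oplus 1}(m_n)$.

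For each $n$, applying Lemma~\ref{speGUP} at $j = v_0$ with initial state $m_n$ produces a GUP, which I call GUP$(n)$: a white-pick corresponds to the jump $v_0 \to v_0 \ominus 1$ (reinforcement $f_1 = T_{\ell, v_0 \ominus 1}$) and a red-pick to the jump $v_0 \to v_0 \oplus 1$ (reinforcement $f_2 = T_{\ell, v_0}$). Its event $A_\infty(n)$ that both colors are picked infinitely often coincides with $U$ under $\bbP_{m_n}$, hence $\bbP_{m_n}(A_\infty(n)) \to 1$. The contradiction will come from applying Theorem~\ref{rubin} to this family, so I must verify that Assumptions~I and~II both hold.

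Assumption~I is immediate: each pick of color $i$ increments the corresponding edge count $N_i$ by one, so $d_i(k, m_n) \ge (\pi_i(m_n) + k - 1)^\alpha$, and $\alpha > 1$ gives $\langle B_i(m_n) \rangle < \infty$ for both $i$. Assumption~II(i) is built into the subsequence. For Assumption~II(ii), the same lower bound yields $\eta_n \ge \pi_{v_0}(m_n)^\alpha$, and matching upper bounds $N_{v_0}(u_i(k)) \le \pi_{v_0}(m_n) + 2k$ (each urn step contributes at most two increments to $N_{v_0}$, one from an outgoing pick if red and one from an incoming return if from $v_0 \oplus 1$) combined with $\Xi_{\ell,\cdot} \le R$ give matching lower bounds on $\langle \Tcal_i(m_n) \rangle$, from which the required positive liminf in \eqref{liminf} follows by a direct order-of-magnitude comparison in $\pi_{v_0}(m_n)$.

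The principal obstacle is Assumption~II(iii). Taking $a_n(k) \equiv 1$ and $b_n(k) \equiv 2$, the required inequality \eqref{ainp} on $A_\infty(n)$ reduces to
\[
\sum_{k \ge 1} \frac{1}{f_1(u_1(k),n)} \le \sum_{k \ge 1} \frac{1}{f_2(u_2(k),n)}.
\]
The bound $N_{v_0 \ominus 1}(u_1(k)) \ge \pi_{v_0 \ominus 1}(m_n) + k - 1$ controls the left-hand side by a quantity comparable to $\pi_{v_0 \ominus 1}(m_n)^{1-\alpha}/(\alpha - 1)$. The delicate step is obtaining a matching lower bound for the red sum, which requires an upper bound on $N_{v_0}(u_2(k))$ that ties the urn time $u_2(k)$ to $k$; combinatorially this amounts to controlling the number of returns to $v_0$ via edge $v_0$ during the first $u_2(k)$ excursions, exploiting that each excursion contributes at most one such return. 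Once this is in place, the gap $\pi_{v_0 \ominus 1}(m_n) \ge \pi_{v_0}(m_n) + R_1$ together with $\alpha > 1$ forces the red sum to dominate the white sum on $A_\infty(n)$ for all $n$ large enough, so Theorem~\ref{rubin} contradicts $\alpha(m_n) \to 1$, completing the proof.
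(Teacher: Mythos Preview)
Your overall strategy (contradiction via Theorem~\ref{rubin}, extraction of a sequence $m_n$ with $\bbP_{m_n}(U)\to 1$, verification of Assumptions~I and~II) is the same as the paper's, but there is a genuine gap in the choice of GUP, and it is precisely the ``delicate step'' you flag but do not carry out.

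You work with the single-vertex GUP of Lemma~\ref{speGUP} at $j=v_0$, so that white picks are jumps $v_0\to v_0\ominus 1$ with reinforcement $T_{v_0\ominus 1}$ and red picks are jumps $v_0\to v_0\oplus 1$ with reinforcement $T_{v_0}$. Your bound $N_{v_0}(u_i(k))\le \pi_{v_0}(m_n)+2k$ is not correct: each excursion from $v_0$ contributes at most two traversals of edge $v_0$, but the $k$-th colour-$i$ pick occurs at urn time $u_i(k)$, not at urn time $k$, so the valid bound is $N_{v_0}(u_i(k))\le \pi_{v_0}(m_n)+2u_i(k)$. On $A_\infty$ there is no a~priori relation between $u_2(k)$ and $k$: white-initiated excursions that circumnavigate the polygon return to $v_0$ through edge $v_0$, each one incrementing $N_{v_0}$ without producing a red pick. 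If many such circumnavigations occur before the $k$-th red pick, then $N_{v_0}(u_2(k))$ can be of order $u_2(k)\gg k$, and the red sum $\sum_k 1/f_2(u_2(k))$ becomes too small to dominate $\sum_k 1/f_1(u_1(k))$. The same obstruction undermines your verification of Assumption~II(ii), since the lower bound on $\langle\Tcal_i(m_n)\rangle$ needs the same control. I do not see how to ``tie $u_2(k)$ to $k$'' from the information available on $A_\infty$ alone.

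The paper avoids this by building a \emph{different} GUP, based on visits to the pair $\{\iota(M_0),\iota(M_0)\oplus 1\}$ rather than to $\iota(M_0)$ alone (after rotating so that $\iota(M_0)=1$, it uses visits to $\{1,2\}$). In that GUP a white pick is a traversal of the weak edge $1$ (either $1\to 2$ or $2\to 1$), and a red pick is a jump out of $\{1,2\}$ (either $1\to 0$ or $2\to 3$). The crucial gain is that edge~$1$ lies entirely inside $\{1,2\}$, so it is never traversed during an excursion away from $\{1,2\}$; consequently $N_1$ is incremented \emph{exactly once per white pick and never otherwise}, giving $N_1(u_1(k))=\pi_1(m'_n)+k-1$ exactly. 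This yields clean lower bounds for $\sum_k 1/f_1^{(n)}(k)$ and $\langle\Tcal_1(m'_n)\rangle$, while $f_2$ is controlled from above using the $\Pi$-gap $\pi_0\wedge\pi_2\ge \pi_1+R_1$ together with $\pi_0\ne\pi_2$. With this GUP both Assumptions~II(ii) and~II(iii) go through with constant $b_n(k)\equiv 1$. The moral is that to make Theorem~\ref{rubin} bite you must choose the urn so that one colour corresponds to traversals of a single edge that is inaccessible between picks; the single-vertex urn does not have this feature on a cycle of length $v\ge 3$.
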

\begin{proof}
For any vector $m \in \S$,
 define the following shift operator, which   means that we  change the labels of the vertices mapping $j \mapsto j \ominus 1$.  More formally, we denote  the shift operator by $\theta \colon m \mapsto m' \in \S$, where under $\bbP_{m'}$ we have
\andre{ $\vec{X}_0 \mapsto \vec{X}_0\ominus1$}, $ N_{\ell, j}(0) \mapsto N_{\ell, j \ominus 1}(0) $ and finally
 $ T_{\ell, j}(0) \mapsto T_{\ell, j \ominus 1}(0)$.
For $s \in \N$, define $\theta_1 = \theta$ and  recursively $\theta_s = \theta\theta_{s-1}.$
  We have
\begin{equation}\label{rotat}
  \bbP_{m}(U) = \bbP_{\theta_s m}(U).
  \end{equation}
 This is because the event $U$ does not depend on the actual label of the \andre{index  chosen  uniformly at random from the set of indices satisfying } \eqref{iota}. \andre{It is enough to prove that $\bbP_m(U) = 0$ for $m \in \Pi$.}
 In virtue of \eqref{rotat}, for each  $m \in \Pi$, we  can set, using a proper rotation, $\iota(m)=1$.
Consider $\Sc(m)$.  Suppose that each time that ${\bf X}^{\ssup \ell}$  jumps from $1$ to $0$ or $2$ to $3$  we add one \andre{red} ball  and each time it jumps either from
$1$ to $2$ or from $2$ to $1$ we add one \andre{white} ball. \andre{We show that } this urn evolves like a
GUP(${\bf M^*}, M^*_0 = m,  f_1, f_2$), for some choice  of reinforcement processes $f_i (\cdot)$, $i\in \{1, 2\}$ and Markov process ${\bf M^*}$ on $\S$.
To see this, define
$$\tau_1 \Def \inf\{u \colon u \ge 0, X^{\ssup \ell}_u = 1 \mbox{ or } X^{\ssup \ell}_u = 2\}.$$
Define recursively, for $k \ge 2$,
$$ \tau_k \Def \inf\{u \colon u > \tau_{k -1}, X^{\ssup \ell}_u = 1 \mbox{ or } X^{\ssup \ell}_u = 2\}. $$
 Define  the stochastic processes
\begin{eqnarray*}
 f_1(k) \Def T_{1} (\tau_k), \qquad
 f_2(k) \Def T_{0} (\tau_k)\1_{X_{\tau_k} =1} + T_{2} (\tau_k)\1_{X_{\tau_k} = 2},
\end{eqnarray*}
where recall that $T_s = T_{\ell, s}$.
Let $M^*_{k} \Def M_{\tau_{k+1}}$, for $k\in \N_0$. It is trivial to check that A) and B) from the definition of GUP are both satisfied.
 In this context $U$ is a subset of the event, that in the urn described above, both white and red balls are picked infinitely often.
Suppose that
$$\sup_{m \in \andre{\Pi}} \bbP_{m}(U)>0,$$
and reason by contraddiction.  Recall that we can find a sequence $m'_n \in \Pi$  such that
$$ \lim_{n \ti} \bbP_{m'_n}(U) =1.$$

 Consider  GUP$(\mathbf{M^*}, M^*_1 = m'_n, f_1, f_2)$, as defined in the proof of Lemma~\ref{zeronu}.
\andre{By taking subsequences, we assume that  $\pi_1(m_n') < \pi_1(m_{n+1}')$}.
 Next we prove that this GUP, if $\sup_{m \in \S} \bbP_m(U) >0$, would satisfy both Assumption I and II, yielding a contraddiction.
 As for Assumption I), it is simple to check, using \eqref{defti}, that
 $$\langle B_{i}(\andre{m_{n}'})\rangle \le \sum_{k=1}^\infty k^{- \alpha}<\infty,$$ for $i \in  \{1, 2\}$.\\
  As for Assumption II) i), we can definitely choose, by considering subsequences,  $m'_n$ satisfying
  $$ \sum_{n=1}^\infty 1- \bbP_{m'_n}(A_\infty) < \infty.$$
 We turn to Assumption II ii).  We have
 $$
 \langle B_{1}(m'_n)\rangle \le \sum_{s=\pi_1(m'_n)}^{\infty} \frac 1{s^\alpha}
 \le \frac 1{\alpha-1} (\pi_1 (m'_n) - 1)^{1 - \alpha}.
$$
Whenever a  red ball is picked from GUP$(\mathbf{M^*}, M^*_1 = m_n', f_1, f_2)$, the following happens. The process $\mathbf{X}^{\ssup \ell}$ either \andre{jumps} from 1 to 0 or from 2 to 3. Before returning to vertex 1 or 2, it must cross again either edge 0 or edge 2.  \andre{In the evaluation of $\langle\Tcal_2(m'_n)\rangle$ and $ \langle B_{2}(m'_n)\rangle$   we don't count the transition weights used to jump from 3 to 2 or the ones used to jump from 0 to 1. }
Recall that for this GUP we have that the initial state is in $\Pi$. Hence  $\pi_{0}(m'_n) \wedge \pi_2(m'_n) \ge \pi_1(m'_n) + \andre{R_1}$,  and $\pi_0(m_n') \neq \pi_2(m_n')$,  and this implies
$$
 \langle B_{2}(m'_n)\rangle \le
 \sum_{s =\pi_0(m'_n) \wedge \pi_2(m'_n)}  \frac 1{s^\alpha}
\le \frac 1{\alpha-1} (\pi_1 (m'_n) - 1)^{1 - \alpha},
$$
notice that on $I^c$,
$$ \langle \Tcal_1(m'_n)\rangle  \ge    \sum_{s = \pi_1(m'_n)+1 }  \frac 1{(s+R)^{2\alpha}} \ge   \frac 1{2 \alpha -1} (\pi_1(m'_n)+R+1) ^{1 - 2 \alpha}. $$

Finally $\eta_{n} = (\pi_1(m'_n))^\alpha.$ Hence Assumption II ii) is satisfied.
We turn to Assumption II iii). Denote by $f^{\ssup n}_i(k)$, with $i\in \{1, 2\}$, the reinforcement process for this GUP with initial condition $m'_n$. We associate $f^{\ssup n}_1$ to the edge 1.   
 As on $m_n'$ we have that $\pi_0(m_n') \neq \pi_2(m_n')$ this implies that
$$  \sum_{k=1}^\infty \frac 1{f^{\ssup n}_2(k)}  \le \sum_{s =\pi_0(m'_n) \wedge \pi_2(m_n')}  \frac 1{s^{\alpha}} \le \frac 1{\alpha-1} (\pi_1(m_n')+R_1-1) ^{1 -  \alpha}, $$
where in the last inequality, we used
$$ \pi_1(m_n') +R_1 \le \pi_{0}(m_n') \andre{\wedge} \pi_{2}(m_n').$$
On the other hand,  we have that
$$ \sum_{k=1}^\infty \frac 1{f^{\ssup n}_1(k)}  \1_{A_{\infty}(n)}   \ge  \sum_{s =\pi_1(m'_n)+1}  \frac 1{(s + R)^{\alpha}} \1_{A_{\infty}(n)}   \ge \frac 1{\alpha-1} (\pi_1(m_n')+R+1) ^{1 - \alpha}\1_{A_{\infty}(n)},$$
  Then,  as  $R_1 \andre{\ge}  R +2$, we have 
\begin{equation}\label{assumptiii}
\begin{aligned}
\sum_{k=1}^\infty \frac 1{f^{\ssup n}_1(k)}  \1_{A_{\infty}(n)} \ge   \sum_{k=1}^\infty \frac 1{f^{\ssup n}_2(k)} \1_{A_{\infty}(n)}. 
\end{aligned}
\end{equation}
 Hence, Assumption II iii) holds with  the choice $\andre{b_n(k)} = 1$ for all $k, n \in \N$,.\\
 Hence, if
$\sup_{m \in \Pi} \bbP_m(U) >0 $ we have that
 Assumption I and II hold for this particular GUP. In virtue of Theorem~\ref{rubin}  and Lemma~\ref{zeronu}, we have a contradiction.~\hfill
\end{proof}

\andre{We denote by $H^*$ the minimum time satisfying  the following. For $u \in \{\iota(M_0), \iota(M_0) \oplus 1\}$,  the condition
$H^* <  t(k, u), t(s, u) < \infty$,   implies that
$$ X^{\ssup \ell}_{t(k, u)+1}= X^{\ssup \ell}_{t(s, u)+1} .$$
In words,  the process $\mathbf{X}^{\ssup \ell}$  after  each visit to either $\iota(M_0)$  or $\iota(M_0) \oplus 1$  made after time $H^*$, it  steps always  in the same direction.   If the direction taken is from $\iota(M_0)$ to $\iota(M_0) \oplus 1$ and from $\iota(M_0) \oplus 1$  to $\iota(M_0)$, then  I holds, as  either the process visits only two vertices  at all large times or  the set $\{\iota(M_0), \iota(M_0) \oplus 1\}$ is visited only finitely often. The alternative, in virtue of Lemma~\ref{lemap}, is that from $\iota(M_0)$ it always jumps to $\iota(M_0) \ominus 1$ and from $\iota(M_0) \oplus 1$ it always jumps to $\iota(M_0) \oplus 2$.
Notice that $H^*$  is not stopping time, and  that $H^*$ can be infinite. On the other hand, if $M_0 \in \Pi$, $H^*<\infty$ on $I^c$, due to  Lemma~\ref{lemap}.}

For $j \in E$, denote by
$$ \Pi_j' \Def \{ m \in \S \colon \pi_j(m) \vee \pi_{j \ominus 1}(m) - R_1 \ge   \pi_j(m) \wedge \pi_{j \ominus 1}(m)\}.$$
\begin{lemma}\label{ccent} Fix $m \in \Pi$,  and $j \in E$. We have
\begin{equation}\label{cent}
 \bbP_m \Big( I \cup \big\{M_t \in \Pi_j',  \mbox{\andre{ i.o.}}\}\Big) =1.
\end{equation}
\end{lemma}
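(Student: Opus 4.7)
The plan is to establish that $\Omega_j \Def I^c \cap \{M_t \in \Pi_j' \text{ only finitely often}\}$ is $\bbP_m$-null for every $m \in \Pi$, from which the lemma follows. The overall strategy is to apply Theorem~\ref{rubin} to the urn GUP$(j,m)$ of Lemma~\ref{speGUP} in the spirit of the proof of Lemma~\ref{lemap}: assume for contradiction that $\sup_{m \in \Pi}\bbP_m(\Omega_j)>0$, extract via the $0$--$1$ law (Proposition~\ref{prap}) a sequence $(m_n)\subset\Pi$ with $\bbP_{m_n}(\Omega_j)\to 1$, and verify both Assumption~I and Assumption~II on this sequence.

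First I show that on $\Omega_j$, both edges incident to $j$ are traversed infinitely often and the urn GUP$(j,m)$ is on the event $A_\infty(j)$. The divergence $\pi_j(M_t),\pi_{j\ominus 1}(M_t)\to\infty$ follows from a case analysis: since $I^c$ forces $j$ to be visited i.o., stabilization of either $\pi_j$ or $\pi_{j\ominus 1}$ would push the difference to $\pm\infty$, contradicting $\Omega_j$. For the stronger statement that both outgoings from $j$ occur infinitely often, one rules out the \emph{cycling} scenario where all late outgoings from $j$ go in a single direction: on the balanced event $\{|\pi_j-\pi_{j\ominus 1}|<R_1\}$, the transition probabilities at $j$ are bounded away from $0$ and $1$ by constants depending only on $R,R_1,\alpha$ (since $\Xi\le R$ and the two reinforcements remain within a uniformly bounded ratio), so the probability of infinitely many consecutive outgoings in a prescribed single direction vanishes. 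Hence on $\Omega_j$ the urn at $j$ is in $A_\infty(j)$ almost surely.

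Now apply Theorem~\ref{rubin} to GUP$(j,m_n)$. Assumption~I is immediate: $\alpha>1$ and $f_i(k)\ge N^\alpha\ge(\mathrm{const})\cdot k^\alpha$ give $\langle B_i(m_n)\rangle\le\sum_k k^{-\alpha}<\infty$. Assumption~II(i) follows by passing to a further subsequence so that $\sum_n(1-\bbP_{m_n}(\Omega_j))<\infty$, and II(ii) follows from the balance constraint $|\phi_1-\phi_2|<R_1$ eventually, which keeps $\langle\Tcal_i(m_n)\rangle$ of the same order as $\langle B_i(m_n)\rangle$. For Assumption~II(iii), the key step, I plan to choose $a_n(k)$ adaptively as the index of the currently larger urn composition and $b_n(k)$ the other. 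On the balanced event the ratio $f_1^{(n)}(k)/f_2^{(n)}(k)$ lies in a bounded interval $[c^{-1},c]$ with $c=c(R,R_1,\alpha)$, so pairing consecutive opposite-color picks produces a telescoping cancellation in \eqref{ainp} with only $O(R_1)$ residual unpaired terms, each of order $\eta_n^{-1}$, negligible relative to $c_n$; this drives the signed sum to be non-positive on $A_\infty(n)$.

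The principal obstacle is making the adaptive pairing argument for Assumption~II(iii) rigorous. In Lemma~\ref{lemap} the initial one-sided bias at $\iota(M_0)$ allowed the static choice $a_n\equiv b_n\equiv 1$; here the balanced weights force $a_n(k)$ to swap indices each time $\phi_1(k,n)-\phi_2(k,n)$ crosses zero. One must verify that the number of swaps is controlled — at most every $R_1$ picks on the balanced event — and that the unpaired residual sums are negligible. This is delivered by the combination of the balance bound, the uniform ratio bound on $f_1^{(n)}/f_2^{(n)}$, and the growth $\eta_n\to\infty$ along the sequence. Once Assumption~II(iii) is verified, Theorem~\ref{rubin} is contradicted, which establishes $\bbP_m(\Omega_j)=0$ and completes the proof.
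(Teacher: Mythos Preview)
Your approach is dramatically more complicated than necessary, and the critical step has a real gap. The paper's proof does \emph{not} invoke Theorem~\ref{rubin} at all: it is a short direct Borel--Cantelli argument. At each visit $t(k,j)$ of $\mathbf{X}^{(\ell)}$ to vertex $j$, one of the two incident edges has $T$-weight at least as large as the other, so the conditional probability of stepping along the heavier edge is bounded below by a fixed constant $c = (1+(1+R_1+2R)^\alpha)^{-1}$, independent of how large the weights are. Hence with probability at least $c^{R_1+R+1}$ the next $R_1+R+1$ departures from $j$ all use the currently heavier side; after such a run the gap $|\pi_j(M_t) - \pi_{j\ominus1}(M_t)|$ exceeds $R_1$, so $M_t \in \Pi_j'$. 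The conditional second Borel--Cantelli lemma then gives the ``i.o.'' conclusion on $I^c$. That is the entire proof.

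Your route through Theorem~\ref{rubin} might in principle be salvageable, but the verification of Assumption~II(iii) is genuinely incomplete. The claim that ``pairing consecutive opposite-color picks produces a telescoping cancellation'' is not an argument: you have not shown that the signed sum in \eqref{ainp} is $\le 0$ on $A_\infty(n)$, and the balance bound $|\pi_j - \pi_{j\ominus1}| < R_1$ on which your control of $f_1^{(n)}/f_2^{(n)}$ rests holds only on $\Omega_j$, which is \emph{contained in} $A_\infty(n)$, not the reverse. Even restricted to $\Omega_j$, the adaptive choice of $a_n(k)$ as the currently leading colour does not obviously force the signed sum to be non-positive: when the lead alternates repeatedly, the cardinalities of $U_a^{(n)}$ and $U_b^{(n)}$ and the sizes of the corresponding summands need not match up in the way your sketch assumes, and the ``$O(R_1)$ residual unpaired terms'' assertion is unsupported. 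The paper's elementary argument sidesteps all of this entirely.
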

\begin{proof}
To prove \eqref{cent} we reason as follows.  Recall that  $t(k,j)$ is the $k$-th visit   to vertex $j$.  Define $J^+(t) \in \{j \ominus 1, j\}$ as the maximiser of $u \mapsto T_{\ell, u}(t)$, \andre{with $u \in \{j \ominus 1, j\}$.} \andr{In case of equality, we pick  edge $j $}. Define
$$ D_k(j) \Def \bigcap_{s =k}^{k + R_1+R}\big\{X_{t(s,j)+1} = J^+(t(s,k)), \quad t(s,j) < \infty\}\cup \{t(s,j) =  \infty\}.$$
  \andr{Our goal is to  prove that }
\begin{equation}\label{cent1}
\begin{aligned}
 \bbP_m \Big( D_k(j) \mbox{ holds for infinitely many $k$}\Big) = 1.
 \end{aligned}
\end{equation}
In fact, if $ t(s,j) =  \infty$ then $t(u,j) =  \infty$ for all $u \ge s$. The  probability, that  after time $t(s, j)$, $k \le s \le k +R_1 +R$,    the process will traverse the edge $J^+(t(s,k))$, conditionally to the past,  is at least 
$$\frac 1{1 + (1+ R_1 +2R)^\alpha}. $$ 
This is because edge $J^+(t(s,k))$ has an advantage at time $t(s,k)$, in terms of transition probabilities.
This, \andre{together with  Borel-Cantelli Lemma},  proves \eqref{cent1}. The extra $R$ appearing in the definition of $D_k(j)$ is due to the gap between $T_j(n)$ and $N_j(n)$.
\hfill
\end{proof}
\begin{lemma}\label{ainf0int}  Fix $m \in \Pi$ and \andre{$u \in \{1, 2, \ldots, v-1\}$.  Set $ j = \iota(M_0) \oplus u$. } Then
\begin{equation}\label{plm}
\bbP_m\Big(\andre{\{M_0 \in \Pi'_j\}}\cap \{X^{\ssup \ell}_{k} = j, X^{\ssup \ell}_{k+1} = j  \oplus 1 \;\; \mbox{i.o.}\} \cap \{X^{\ssup \ell}_k = j, X^{\ssup \ell}_{k+1} =j\ominus 1 \;\; \mbox{i.o.}\} \cap \{H^*  = 0\}\Big)=0.
\end{equation}
\end{lemma}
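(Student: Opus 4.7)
The plan is to reason by contradiction, closely following the argument in the proof of Lemma~\ref{lemap}, applying Theorem~\ref{rubin} to a GUP centered at vertex $j$. Assume the probability in \eqref{plm} is positive. A zero-one law analogous to Proposition~\ref{prap}, together with a rotation of the polygon as in \eqref{rotat}, produces a sequence $m_n' \in \Pi \cap \Pi_j'$, with $\iota(M_0) = 0$ and $j = u$ fixed by the rotation for some $u \in \{1, \ldots, v-1\}$, such that the $\bbP_{m_n'}$-probability of the event in \eqref{plm} tends to $1$. The case $u = 1$ is immediate, since $H^* = 0$ forces the walker at $\iota(M_0) \oplus 1 = j$ to step in a single direction, precluding oscillation; assume $u \in \{2, \ldots, v-1\}$ from now on.

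Using the discussion preceding Lemma~\ref{ccent}, the combination of $H^* = 0$ and Lemma~\ref{lemap} restricts the dynamics on the event in \eqref{plm} to case (iv): the walker never traverses edge $\iota(M_0) = 0$. Every excursion of $\mathbf{X}^{\ssup \ell}$ from $j$ therefore leaves and returns along the same incident edge, so between consecutive visits to $j$ the count $N_{\ell, j}$ increments by exactly $2$ on each ``red'' pick (jump $j \to j \oplus 1$) and by $0$ otherwise, and symmetrically for $N_{\ell, j \ominus 1}$ on ``white'' picks. Constructing the GUP at $j$ as in Lemma~\ref{speGUP}, with $f_1 = T_{\ell, j}$ and $f_2 = T_{\ell, j \ominus 1}$ evaluated at visit times, yields the explicit representation
\[
f_1(k+1) = \bigl(\pi_j(m_n') + 2 \phi_1(k) + \Xi\bigr)^\alpha, \qquad f_2(k+1) = \bigl(\pi_{j \ominus 1}(m_n') + 2 \phi_2(k) + \Xi\bigr)^\alpha,
\]
with the $\Xi$-terms in $[0, R]$; on the event in \eqref{plm} this urn picks both colours infinitely often.

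It remains to verify Assumptions~I and~II of Theorem~\ref{rubin} along $\{m_n'\}$ in order to derive the contradiction. Assumption~I holds since $\alpha > 1$. Assumption~II(i) follows from the $\bbP_{m_n'}$-probability of $A_\infty$ tending to $1$, after extraction of a subsequence. Assumption~II(ii) is a direct computation of polynomial tails: with $\pi_1^* = \pi_j(m_n')$ and $\pi_2^* = \pi_{j \ominus 1}(m_n')$, one finds $\eta_n \sim (\min_i \pi_i^*)^\alpha$, $\langle \Tcal_i(m_n') \rangle \sim (\pi_i^*)^{1-2\alpha}$ and $\langle B_i(m_n') \rangle \sim (\pi_i^*)^{1-\alpha}$, so the ratio appearing in \eqref{liminf} is bounded below by a positive constant. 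For Assumption~II(iii), assume without loss of generality $\pi_j(m_n') \ge \pi_{j \ominus 1}(m_n') + R_1$ (the reverse case is symmetric) and take $a_n(k) \equiv 1$. Then $\sum_{i \in U^{\ssup n}_a} 1/f_1(i, n)$ is bounded above by a tail of order $\pi_j^{1-\alpha}$, while $\sum_{j \in U^{\ssup n}_b} 1/f_2(j, n)$ is bounded below on $A_\infty(n)$ by a tail of order $(\pi_{j \ominus 1} + R)^{1-\alpha}$. Combining these bounds with $1 - \alpha < 0$ and the gap $R_1 \ge R + 2$ yields \eqref{ainp}, contradicting Theorem~\ref{rubin}.

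The most delicate step is the pointwise verification of Assumption~II(iii): both the bounded perturbation $\Xi \in [0, R]$ and the factor of $2$ from the case-(iv) edge-traversal count must be absorbed into the initial weight gap, which is precisely the role of the size requirement $R_1 \ge R + 2$ built into the definition of $\Pi$.
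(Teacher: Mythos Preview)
Your proposal is correct and follows essentially the same route as the paper's own proof: argue by contradiction, extract via Proposition~\ref{prap} a sequence of initial configurations along which the probability of the offending event tends to $1$, model the jumps from vertex $j$ as a GUP as in Lemma~\ref{speGUP}, exploit $\{H^*=0\}$ to force same-edge returns (giving the factor $2$ in the edge counts), and then verify Assumptions~I and~II so that Theorem~\ref{rubin} yields the contradiction. Your separate treatment of $u=1$ and your explicit formulas for $f_1,f_2$ are cosmetic additions; the choice of $a_n(k)$ according to which of $\pi_j,\pi_{j\ominus1}$ is larger by $R_1$, and the use of $R_1\ge R+2$ to absorb the $\Xi$-perturbation, are exactly the paper's argument.
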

\begin{proof}
Denote by $U'$ the event appearing in the left-hand side of \eqref{plm}.
Suppose that $\sup_{m \in \Pi'} \bbP_m(U') > 0$.  This implies that   there exists a sequence $\andre{m_n''} \in \Pi'$, such that
\begin{equation}\label{fig}
  \lim_{n \ti} \bbP_{m_n''}\Big(U'\Big) =1.
\end{equation}
\andre{By the proof of Proposition~\ref{prap}, we can choose $m_n''$ in such a way that the state $m_n''$ communicates with $m_{n+1}''$, i.e. there exists $s$ such that $\bbP_{m_n''}(M_s = m_{n+1}'')>0$. This implies that $\lim_{n \ti} \min_{j \in E} \pi_j (m_n'') = \infty$.}
Consider a sequence of independent $\Sc(m_n'')$, where $m_n''$ satisfies \eqref{fig}. Denote by $j_n = \iota(m_n'') \oplus u$, where $u \in [v-1]$.
In order to model the jumps from vertex $j_n$ we have to consider an independent sequence of GUP$(j_n, m_n'')$.

\andre{We check that  this sequence of GUP  }satisfies Assumptions I and II \andre{on $\{H^* = 0\}$}. Assumption I is quite simple. As for Assumption II, choose any sequence which satisfies Assumption II i). \andre{Next we use the fact that on the event \andre{$\{H^* = 0\}$} we have that each time the process $\mathbf{X}^{\ssup \ell}$ jumps from   vertex $j_n$ it goes back to that vertex using the same edge, on $I^c$.}
We have, on $I^c$,
$$  \langle \Tcal_1(m_n'')\rangle  \ge    \sum_{k = \pi_{j - 1}(m_n'')+1 }  \frac 1{(2k+1+R)^{2\alpha}} \ge   \frac 1{(2 \alpha-1)}  (2 \pi_{j-1}(m_n'') + 2 + R) ^{1 - 2 \alpha}, $$
whereas
$$ \langle \Tcal_2(m_n'')\rangle  \ge    \sum_{k = \pi_{j}(m_n'')+1 }  \frac 1{(2k+1+R)^{2\alpha}} \ge \frac 1{(2 \alpha -1 )} (2 \pi_{j}(m_n'') +2 +  R) ^{1 - 2 \alpha}.$$
This is due to the fact that after time $H^*$, roughly speaking, the cycle structure is broken, as explained above.
Moreover,
\begin{eqnarray*}
 \langle B_{1}(m_n'')\rangle &\le& \sum_{k=\pi_{j-1}(m_n'')}^{\infty} \frac 1{k^\alpha
} \le C_1   (\pi_{j-1} (m_n''))^{1 - \alpha},\\
 \langle B_{2}(m_n'')\rangle &\le& \sum_{k=\pi_{j}(m_n'')}^{\infty} \frac 1{k^\alpha
} \le C_1   (\pi_{j} (m_n''))^{1 - \alpha}.
\end{eqnarray*}
Hence Assumption II ii) holds.\\
As for Assumption II iii),  we
pick $b_n(k)$   as follows. It equals $1$ if $\pi_{j}(m_n'') \ge  \pi_{j\ominus 1}(m_n'')+R_1 $ and equals $2$ otherwise. Recall that $1$ is associated to white balls (move to the left) white $2$ with red ones (move to the right). In fact, if $\pi_{j}(m_n'') \ge  \pi_{j\ominus 1}(m_n'') + R_1$, we have
$$ 
\begin{aligned}
\sum_{k=1}^\infty \frac 1{f^{\ssup n}_2(k)}  \1_{A_{\infty}(n)}   &\ge  \sum_{s =\pi_{j\ominus 1}(m''_n)+1}  \frac 1{(2s + R)^{\alpha}} \1_{A_{\infty}(n)}   \ge \frac 1{\alpha-1} (2\pi_{j\ominus 1}(m''_n)+R+1) ^{1 - \alpha}\1_{A_{\infty}(n)}\\
&\ge \frac 1{\alpha-1} (2 \pi_{j \ominus 1}(m_n'')+2R_1-1) ^{1 -  \alpha}\1_{A_{\infty}(n)}\\
 &\ge \sum_{s =\pi_j(m''_n)}  \frac 1{(2s)^{\alpha}} \1_{A_{\infty}(n)} \ge 
\sum_{k=1}^\infty \frac 1{f^{\ssup n}_1(k)}\1_{A_{\infty}(n)}.
\end{aligned}
$$
Similar reasoning, applies for the case $\pi_{j\ominus 1}(m_n'') \ge  \pi_{j}(m_n'')+R_1$.

Hence if we reason by contradiction and assume that both colours (directions) are picked infinitely often with positive probability, then   GUP satisfies both Assumption I and II and Theorem~\ref{rubin} yields a contradiction.
\hfill
\end{proof}

\noindent{\bf Proof of Theorem~\ref{cyclethm}.}
Suppose that   $m$ is a configuration such that $\bbP_m(I^c) >0$.  Lemma \ref{impin} implies that on $I^c$  the process $\mathbf{M}$ hits a configuration in $\Pi$. Moreover on $I^c$,  $U^c$ holds (via Lemma~\eqref{lemap}) and the process $\mathbf{M}$ hits a  configuration in $\Pi_j'$ after $H^*$, in virtue of \ref{ccent}.
In virtue of Lemma \ref{ainf0int}, for each $j \in V$
 there exists a random time $H_j$ and a vertex $v_j \in \{j \ominus 1, j \oplus 1\}$ such that
$X_{t(k, j) +1}^{\ssup \ell} = v_j$ for all  finite $t(k, j) \ge H_j$. This proves that 
\begin{equation}\label{le3G}
\bbP_{m}\Big(I^c \setminus (G_0 \cup G_1 \cup G_2) \Big) = 0\qquad \mbox{ for all $m \in \S$},
\end{equation}
 where
\begin{eqnarray*}
G_0&\Def& \{X^{\ssup \ell}_{k+2} =  X^{\ssup \ell}_{k}, \qquad \mbox{for all  large enough $k \in \N\}$}\\
G_1 &\Def& \{X^{\ssup \ell}_{k+s} = X^{\ssup \ell}_k \oplus s, \qquad \mbox{\andre{for $s \in \N$ and for all  large enough $k \in \N\}$}}\\
G_2 &\Def& \{ X^{\ssup \ell}_{k+s} =  X^{\ssup \ell}_k\ominus s, \qquad \mbox{\andre{for $s \in \N$ and for all  large enough $k \in \N\}$}}.
\end{eqnarray*}
Next, we prove that
\begin{equation}\label{le2G}
  \bbP_{m}(G_1 \cup G_2) =0, \qquad \mbox{for all } m \in \S.
 \end{equation}
Denote by $\andre{\overline{H}}= \max H_j$. Notice that there exist $j$ such that
$\pi_j(M_{\andre{\overline{H}}}) \le \pi_{j \ominus 1}(M_{\andre{\overline{H}}})$.
We have
$$ \frac{T_{\ell, j}(t(k,j))}{T_{\ell, j}(t(k,j)) + T_{\ell, j \ominus 1}(t(k,j))} \le 2/3,$$
for all large $k$, such that  $t(k,j) > \overline{H}$.
This implies that for all sufficiently large $s$, we have 
\begin{equation}\label{stnk}
\bbP_m(G_1 \cap \{\overline{H} \le t(s, j)\}) \le \prod_{k=s}^\infty  \frac{T_{\ell, j}(t(k,j))}{T_{\ell, j}(t(k,j)) + T_{\ell, j \ominus 1}(t(k,j))} \le \prod_{k=s}^\infty \frac 2{3}= 0,
\end{equation}
for all $m \in \S$,   and $ s \in \N$.
\andre{By sending $s \to \infty$, \eqref{stnk}  implies that $\bbP_m(G_1) = 0$.  With a very similar argument we infer $\bbP_m(G_2) = 0$. Hence, \eqref{le3G} combined with \eqref{le2G} implies that $\bbP_m(I^c \setminus G_0) = 0$. This gives a contraddiction, as  $G_0  \subset I$ and we assumed $\bbP_m(I^c)>0$.}

Hence,  we proved that $\bbP_m (I^c) = 0$, for all $m\in \S$. Next, we prove that $V'$ can be taken to contain exactly two adjacent  vertices.  We reason again by contradiction.
Fix a vertex  $j$ such that $1< j+1< v-1$.
\begin{equation}\label{contrV}
\sup_{m \in \S} \bbP_m\Big(  \{ X_n^{\ssup \ell} = 0, \mbox{i.o.}\}\cap  \{ X_n^{\ssup \ell} = j+1 , \mbox{i.o.}\}\Big) >0.
\end{equation}

Consider GUP($j$). We proved that after a random time, each time the process $\mathbf{X}^{\ssup \ell}$ makes a jump from $j$, if it returns to $j$ it does it through the same edge used in the jump. Hence after a random time, the GUP behaves like an urn.
Using the similar estimates appearing in the proof of Lemma~\ref{ainf0int} we obtain that $\sup_{m \in \S} \bbP_{m \in \Pi_j'}(A_{j}(\infty)) =0.$  After
a certain random time, each jump from $j-1$ would either always go towards  $j \ominus 2$ or always towards $j$. In the former case, $j$ is visited finitely often, which yields a contradiction. In the other case, the walk we have that the event
$$\Big\{\{X_n^{\ssup \ell}, X_{n+1}^{\ssup \ell}\} \in \{j-1, j\}  \quad \mbox{ eventually}\Big\},$$
holds, yielding another contradiction. The particular choice of the set $\{0, 1, 2, \ldots,  j+1 \}$ does not affect the generality of the result. We just use a relabelling of the vertices and a union bound to get a contradiction. We conclude that the process $\mathbf{X}^{\ssup \ell}$ oscillates between exactly two vertices at all large times.
\hfill \qed
\section{Proof of Theorem~\ref{Examp1}}
We consider the urn defined in Example ~\ref{Example1}, but  with general initial conditions.
Denote by $(N_1(0), N_2(0))\in \N^2$ the initial composition of the urn, i.e. $N_1(0) $ (resp. $N_2(0)$) white (resp. red) balls. In Example~\ref{Example1} the initial composition was assumed to  be $(1,1)$. The evolution of the urn is the same as described in the Example, and the constraint on $g_1(Z_n)$ and $g_2(Z_n)$ becomes
$$ g_i(Z_n) \le \theta \left(N_1(n) + N_2(n)\right),$$
a.s., for all large $n$, $i \in \{1, 2\}$, where the properties of $\theta$ are described in Example \ref{Example1}.
Notice that in virtue of \eqref{prot}, combined with the monotonicity of $\Psi$, we have $\theta(n+2) \le  (1/2) \ln n$ for all large $n$.\\
The process $M_n = (N_1(n), N_2(n), Z_n)$ evolves as an homogeneous markov chain on a countable state space. Set
$ f_i(n) = \Psi(N_1(n) + g_i(Z_n))$, for $i\in \{1, 2\}$.   Both $ f_1(n) $ and $f_2(n)$ are functions of $M_n$. With this representation, it is trivial to see that the urn in Example~\ref{Example1} evolves as GUP($M_n, M_0, f_1, f_2$).
\begin{proposition}\label{utial} Consider an urn with arbitrary initial conditions. The event
$$\{ |N_{1}(n) - N_{2}(n)| \ge (1/2)\floor{\ln n} +1, \quad \mbox{i.o.}\}$$
holds a.s..
\end{proposition}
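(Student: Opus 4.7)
The plan is to argue by contradiction. Suppose that the event
\[
\mathcal{E} \df \bigl\{|N_{1}(n) - N_{2}(n)| \le (1/2)\lfloor \ln n \rfloor \text{ for all sufficiently large } n\bigr\}
\]
has positive probability, and set $D_n \df N_1(n) - N_2(n)$ and $L_n \df (1/2)\lfloor \ln n\rfloor$. Since each pick changes $D_n$ by exactly $\pm 1$, on $\mathcal{E}$ both colours must be picked infinitely often (otherwise $|D_n|$ would grow linearly in $n$), so $N_1(n), N_2(n) \to \infty$. Writing $a_n \df N_1(n) + g_1(Z_n)$ and $b_n \df N_2(n) + g_2(Z_n)$ and using the bound $g_i(Z_n) \le \theta(n+2) \le (1/2)\ln n$ noted just before the statement of the proposition, on $\mathcal{E}$ one has $|a_n - b_n| \le (3/2)\ln n$ and $a_n + b_n = n + O(\ln n)$, so both $a_n, b_n$ lie in an $O(\ln n)$-neighborhood of $n/2$. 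In particular, the white-pick probability $p_n \df \Psi(a_n)/(\Psi(a_n)+\Psi(b_n))$ is controlled through the two regimes assumed on $\Psi$.

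The core of the argument is a conditional Borel--Cantelli step. For each large $n$, let $F_n$ be the event that the picks at times $n+1, \ldots, n + 2L_n + 2$ are all of the same colour. On $\mathcal{E}\cap F_n$,
\[
|D_{n+2L_n+2}| \ge (2L_n+2) - |D_n| \ge L_n + 2 > L_{n+2L_n+2}
\]
for $n$ large enough, directly contradicting $\mathcal{E}$. It therefore suffices to show that $\sum_n \bbP(F_n \giv \Gcal_n) = \infty$ almost surely on $\mathcal{E}$, and then invoke L\'evy's conditional Borel--Cantelli lemma to conclude that $F_n$ occurs infinitely often on $\mathcal{E}$.

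The main obstacle is the lower bound on $\bbP(F_n \giv \Gcal_n)$. When $\Psi$ falls in the second (twice-differentiable) regime --- for instance $\Psi(x)=x^\alpha$, $\alpha>1$ --- a direct estimate gives $p_n = 1/2 + O(\ln n / n)$ on $\mathcal{E}$, whence
\[
\bbP(F_n \giv \Gcal_n) \ge c \cdot 2^{-(2L_n+2)} \asymp n^{-\ln 2},
\]
which is non-summable since $\ln 2 < 1$, producing the required divergence. The super-rapid regime $\lim \Psi(x)/\Psi(x-1) = \infty$ is more delicate because $p_n$ may be close to $0$ or $1$ according to the sign of $D_n$, and the naive product bound collapses. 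In that case we replace $F_n$ by an excursion-based event: on $\mathcal{E}$, $D_n$ returns to $0$ infinitely often, and condition \eqref{prot} combined with a gambler's-ruin computation for the reinforced walk $D_n$ shows that each such excursion reaches $\pm(L_n+1)$ before the next return to $0$ with probability at least $c/L_n \asymp 1/\ln n$. Summing over the excursions again yields divergence of the conditional probabilities, and the same Borel--Cantelli argument closes the contradiction.
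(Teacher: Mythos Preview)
Your argument has a genuine gap in the ``twice-differentiable'' regime. The claim $p_n = 1/2 + O(\ln n/n)$ on $\mathcal{E}$ requires $\Psi'(x)/\Psi(x) = O(1/x)$, which is \emph{not} implied by the hypotheses of regime~(b). The exponential case $\Psi(x) = e^{\gamma x}$ is explicitly listed in Example~\ref{Example1} as an instance of regime~(b), and there $p_n = 1/(1 + e^{\gamma(b_n - a_n)})$; on $\mathcal{E}$ you only know $|a_n - b_n| = O(\ln n)$, so $p_n$ can lie anywhere in $[n^{-c}, 1 - n^{-c}]$. Your product bound $\bbP(F_n \mid \Gcal_n) \ge c\,2^{-(2L_n+2)}$ therefore collapses for exponential $\Psi$ just as badly as in the super-rapid regime, and the case split does not actually separate the easy situations from the hard ones.

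The paper sidesteps all of this with one observation you are missing: rather than asking for a run of a \emph{fixed} colour, it asks for a run of the \emph{currently leading} colour $v(n) := \arg\max_i\{N_i(n) + g_i(Z_n)\}$. Because $\Psi$ is increasing, the leading colour always has pick probability at least $1/2$, uniformly over both regimes of $\Psi$ and with no assumption on the sizes of $N_i$ or $g_i$. This immediately gives $\bbP(C_n \mid M_{n-1}) \gtrsim (1/2)^{\lfloor \ln n\rfloor + 1} \ge 1/n$, and Borel--Cantelli over disjoint blocks of length $\lfloor \ln n\rfloor + 1$ finishes. The final inclusion $C_n \subset \{|N_1 - N_2| \ge (1/2)\lfloor \ln n\rfloor + 1 \text{ at a nearby time}\}$ uses only $g_i \le (1/2)\ln n$. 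No contradiction hypothesis, no Taylor expansion of $\Psi$, no excursion analysis, and no regime split are needed.

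Your excursion sketch for regime~(a) may be completable, but the ``gambler's-ruin computation for the reinforced walk'' is doing real work you have not supplied, and in any case it is unnecessary once you adopt the leading-colour trick.
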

\begin{proof}
Let $v(n) \in \{1, 2\}$ the  index which  maximizes  $i \mapsto N_i(n)+ g_i(Z_n)$. In case of equality we choose an index uniformly at random.
Define
$$ C_n \Def \left\{N_{v(n)}\Big(n + \floor{\ln n } +1\Big) - N_{v(n)}(n) = \floor{\ln n} +1\right\}.$$
Notice that
$$ \bbP(C_n \;|\; M_{n-1}) \ge  (1+ o(1/\ln n))^{\floor{\ln n} +1}\left(\frac 12\right)^{\floor{\ln n} +1}.$$
Using $\floor{\ln n} +1 \le \log_2 n$ for all large $n$ combined with the second Borel Cantelli lemma, over disjoint  and hence independent blocks, we have
$$\bbP(C_n \;\mbox{ i.o.}) =1.$$
On the other hand,
$$ C_n \subset \left\{\Big|N_{1}\big(n +\floor{\ln n}\big) - N_{2}\big(n+ \floor{\ln n} +1\big)\Big|  \ge \floor{\ln n} +1\right\},$$
as $g_i(Z_n) \le  \theta(n+2) \le (1/2) \ln n$, for all large $n$, ending the proof.
\hfill\end{proof}

\noindent{\bf Proof of Theorem~\ref{Examp1}}.  As $\Psi$ satisfies
$\int_{1}^\infty (1/\Psi(u)) \d u<\infty,$
 it is immediate to see that the GUP  associated to the urn satisfies Assumption I.
Next, we reason by contradiction and suppose that for this GUP, we have that there exists $m$ such that $\alpha(m) >0$.  Then we prove that also Assumption II hold, yielding a contradiction (via Theorem\eqref{rubin}). Using the proof of Proposition~\ref{prap}, we can argue the  existence of a sequence $(m_n)$ of comunicating states in $\S$, i.e. for all $n\in \N_0$,
$$ \bbP_{m_n}(M_j = m_{n+1}) >0, \qquad \mbox{for some } j \in \N,$$
  which satisfies Assumption II i).  We consider a sequence of independent urns. The urns have different initial conditions. The Markov chain associated with the $n$-th urn is denoted by ${\bf M}^{\ssup n}$ and satisfies $M^{\ssup n}_0 = m_n$.
Denote by $(N^{\ssup n}_1(0), N^{\ssup n}_2(0))  \in \N^2$ the initial composition of the urn when $M^{\ssup n}_0 = m_n$.
    Using Proposition~\ref{utial}, we can assume that the initial composition of the urn satisfies
\begin{equation}\label{fre}
  |N^{\ssup n}_1(0) - N^{\ssup n}_2(0)| \ge \floor{\ln  \left(N^{\ssup n}_1(0) + N^{\ssup n}_2(0) \right)}+1.
  \end{equation}
  In fact, for any initial state $m_n$, in virtue of Proposition~\ref{utial},  there exists an a.s. finite stopping time $S^*_n$ when
 $$ |N^{\ssup n}_1(S^*_n) - N^{\ssup n}_2(S^*_n)| \ge  \floor{\ln  \left(N^{\ssup n}_1(0) + N^{\ssup n}_2(0) + S^*_n\right)} +1.$$
   Define
$ m'_n$ one of the elements in  the support of $M_{S^*_n}$, such that $\bbP_{m'_n}(A_{\infty}) \ge \bbP_{m_n}(A_{\infty}).$ The existence of such $m'_n$ is a consequence of the markov property and law of total probability.
 We  use $m'_n$ instead of $m_n$, so we drop $'$ and   assume \eqref{fre} to hold.
 Moreover, we can assume that both sequences $N^{\ssup n}_1(0)$, $ N^{\ssup n}_2(0)$ are strictly increasing in $n$.
Using Proposition~\ref{nicefu}, we can easily argue that Assumption II ii) holds.
Set
$$t_n =  \max \left(N^{\ssup n}_1(0), N^{\ssup n}_2(0)\right).$$
We have $ t_n \in \N$ and $\lim_{n \ti} t_n = \infty$.
Notice that using \eqref{fre}, we have
$$ \max_{i} \langle B_i(m_n) \rangle \ge \sum_{k =t_n- \floor{\ln t_n}) -1} \frac 1{\Psi(k + \theta(k))},$$
whereas
$$  \min_i\sum_{k=1}^\infty \frac 1{f^{\ssup n}_i(k)}  \le \sum_{k =t_n} \frac 1{\Psi(k)}.$$
In virtue of
 Proposition~\ref{spben} we conclude that Assumption II iii) holds by choosing $a_n(k)$ constantly equal to the index in $\{1, 2\}$ corresponding to the color corresponding to the  maximizer  $i \mapsto \langle B_i(m_n) \rangle$.

\section{Appendix}
\begin{proposition}\label{prap} Consider a Markov chain $\mathbf{M}$ on a countable state space $\S$. Let $\bbP_m$ the measure under which $M_0 = m \in \S$, a.s.. For any event $A \in \sigma(\mathbf{M})$, we have $\sup_{m \in  \S} \bbP_m(A) \in \{0,1\}.$
\end{proposition}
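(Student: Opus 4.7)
The plan is to exploit the Markov property in combination with Levy's zero-one law, reducing the statement to the case of shift-invariant events (which is what all the applications in the paper actually use). Write $h(m) := \bbP_m(A)$ and $\alpha := \sup_{m \in \S} h(m)$. If $\alpha = 0$ we are done, so assume $\alpha > 0$ and choose $m^* \in \S$ with $h(m^*) > 0$. The goal is then to show $\alpha = 1$.

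First I would treat the core case where $A$ is invariant under the shift, i.e.\ $A = \theta^{-1} A$, where $\theta$ denotes the coordinate shift on $\S^{\N_0}$. The key identity is
$$\bbP_{m^*}(A \mid M_0, \dots, M_n) \;=\; h(M_n), \qquad \bbP_{m^*}\text{-a.s.},$$
which follows by iterating $A = \theta^{-n} A$ and applying the Markov property: $\mathbf{1}_A = \mathbf{1}_A \circ \theta^n$, so $\bbE_{m^*}[\mathbf{1}_A \circ \theta^n \mid \Fcal_n] = \bbE_{M_n}[\mathbf{1}_A] = h(M_n)$. Levy's zero-one law then yields $h(M_n) \to \mathbf{1}_A$ $\bbP_{m^*}$-almost surely. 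Since $\bbP_{m^*}(A) = h(m^*) > 0$, the event $A$ has positive probability under $\bbP_{m^*}$, and on that event $h(M_n) \to 1$. Using the countability of $\S$, for every $\eps > 0$ there must exist some state $m' \in \S$ with $h(m') > 1 - \eps$, hence $\alpha = 1$.

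To cover arbitrary $A \in \sigma(\mathbf{M})$, I would argue that only the invariant part matters for the supremum. Decompose $A$ according to the starting state: under $\bbP_m$, the coordinate $M_0$ is the constant $m$, so $A$ coincides $\bbP_m$-a.s.\ with an event in $\sigma(M_1, M_2, \dots)$, which one may in turn enlarge (modulo $\bbP_m$-null sets) to an invariant event $A^{(m)}$ of the shifted chain with $\bbP_m(A) = \bbP_m(A^{(m)})$. Applying the invariant-case argument above to each $A^{(m)}$ (starting from the $m$ that realizes $h$ close to $\alpha$) propagates the mass to states reachable under $\bbP_m$, and the countability of $\S$ again upgrades an along-sequence conclusion to a supremum conclusion.

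The main obstacle is justifying the reduction from a general event to an invariant one without losing information about the supremum; the passage from $\bbP_{m^*}(A|\Fcal_n)$ to $h(M_n)$ is completely transparent only when $A$ is genuinely invariant, so some care is needed to ensure that the non-invariant part of $A$ contributes zero to $\sup_m \bbP_m(A)$ (as happens in all the applications, e.g.\ in Lemma~\ref{zeronu} where $\bbP_m(U) = 0$ for $m \notin \Pi$). Everything else is routine: Levy's zero-one law provides the a.s.\ convergence, the Markov property provides the identification of conditional expectation with $h(M_n)$, and countability of $\S$ converts convergence of $h(M_n)$ to $1$ on a positive-probability event into the statement $\sup_m h(m) = 1$.
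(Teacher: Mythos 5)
Your first paragraph (the shift-invariant case) is correct and complete: for $A=\theta^{-1}A$ the Markov property gives $\bbP_{m^*}(A\mid\Fcal_n)=h(M_n)$, Levy's zero--one law gives $h(M_n)\to\1_A$ $\bbP_{m^*}$-a.s., and on the positive-probability event $A$ this forces the existence of states $m'$ with $h(m')>1-\eps$, hence $\sup_m h(m)=1$. This is the same martingale phenomenon the paper exploits, implemented differently: the paper approximates $A$ by events $A_n\in\Fcal_n$ with $\bbP_{m_0}(A_n\,\Delta\,A)<\eps'$, selects an atom $\{M_n=m\}$ on which the conditional probability of $A$ is nearly maximal, and uses the identity $\bbP_{m_0}(A\mid A_n\cap\{M_n=m\})=\bbP_m(A)$ to conclude $\bbP_m(A)\ge1-\eps$; your route through Levy's law is cleaner for invariant $A$.

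The genuine gap is precisely the reduction you flag and then leave unresolved, and it cannot be repaired: for a general $A\in\sigma(\mathbf M)$ the statement is false, so no reduction to the invariant case can preserve the supremum. Take $\S=\{0,1,2\}$ with $p(0,1)=p(0,2)=1/2$, $p(1,1)=p(2,2)=1$, and $A=\{M_0=0,\ M_1=1\}$; then $\bbP_0(A)=1/2$ while $\bbP_m(A)=0$ for $m\neq0$, so $\sup_m\bbP_m(A)=1/2\notin\{0,1\}$. Your proposed enlargement does exist here (e.g. the invariant event $\{M_n=1 \mbox{ eventually}\}$ has $\bbP_0$-probability $1/2$), but applying the invariant-case conclusion to it yields $\sup_m$ of \emph{its} probability equal to $1$, which says nothing about $\sup_m\bbP_m(A)$: the non-invariant part of $A$ does contribute to the supremum, contrary to what the reduction needs. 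Note that the paper's own proof is limited in exactly the same way: its key identity $\bbP_{m_0}(A\mid A_n\cap\{M_n=m\})=\bbP_m(A)$ fails for this $A$ (with $n=1$, $A_1=A$, the left side is $1$ and the right side is $0$), and is legitimate essentially only when $A$ is invariant under the shift, i.e. when the event ``restarted'' from $M_n$ is the same event. So your invariant-case argument in fact covers what the paper's argument genuinely establishes (and what its ``infinitely often''-type applications require); what is missing, in both your proposal and in the stated generality of the proposition, is a correct hypothesis isolating that class of events rather than a reduction from arbitrary $A$.
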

\begin{proof}
  Suppose that there exists $m_0 \in \S$ such that   $ \bbP_{m_0}(A) >0$. Under this assumption we prove that $\sup_{m \in \S} \bbP_{m}(A) =1$.
 Let $\Fcal_{n} = \sigma(M_{s},  s \le n)$.
There exists a sequence of events $A_{n}$, such that $A_{n} \in \Fcal_{n}$,
$ \bbP_{m_0}(A_{n} \Delta A) = o(1)$. For any fixed $\eps'>0$, choose $n$ large enough that
$ \bbP_{m_0}(A_{n} \Delta A) < \eps'$ and $\bbP_{m_0}(A_{n}) \ge  (1/2)\bbP_{m_0}(A) >0$. Choose $m$, such  that
$$\bbP_{m_0}\big(A \giv A_{n}  \cap  \{M_{n} = m\}\big)  > \sup_{j} \bbP_{m_0}\big(A \giv A_{n} \cap  \{M_{n} = j\} \big) - \eps',$$
where the supremum in the right-hand side is taken over  the integers $j$ such that $\{M_{n}= j\} \cap A_{n} \neq \emptyset$.
Recall  that $M_{n}$ is a Markov chain, and the future of the GUP given $M_{n}$ is independent of $\Fcal_{n-1}$. Hence as $A_{n} \in \Fcal_{n}$, we have
$$ \bbP_{m_0}\big(A \giv A_{n} \cap \{M_{n} = m\}\big)  = \bbP_m\big(A\big).$$
Moreover,
$$ \mathbb{P}_{m_0}(A \cap A_n) = \bbP_{m_0}(A_n) - \bbP_{m_0}(A_n \setminus A) \ge \bbP_{m_0}(A_n) -\eps',$$
where $\eps'$ was defined above.
Fix $\eps> 0$.
By our choice of $m$, we have
$$
\begin{aligned}
\bbP_m\big(A \big) &= \bbP_{m_0}\big(A \giv A_n \cap\{M_{n} = m\}\big) \ge  \sum_{j \in \S} \bbP_{m_0}\big(A \giv A_{n} \cap \{M_{n}= j\}\big) \bbP_{m_0}(M_{n} = j \giv A_{n}) - \eps'  \\
&= \bbP_{m_0}\big(A \giv  A_{n}\big)  - \eps' = \frac{\bbP_{m_0}\big(A \cap  A_{n}\big)}{\bbP_{m_0}(A_{n})}  - \eps'\ge 1 - \frac{\eps'}{\bbP_{m_0}(A_{n})} - \eps'\\
 &\ge  1 - \frac{2 \eps'}{\bbP_{m_0}(A)} - \eps' \ge 1 - \eps, \\
\end{aligned}
$$
if we choose $\eps'$ small enough. In the inequality before the last one, we used the fact that $\bbP_{m_0}(A_{n}) \ge  (1/2)\bbP_{m_0}(A)>0$.  \hfill
\end{proof}

\noindent{\bf Proof of Proposition~\ref{bmht}}
We first compute
\begin{equation}\label{nonc}
 \mathbb{E}^{x} \Big[ H_{a} \wedge H_0 \1_{H_{a} < H_{0}}\Big]= \frac 13 \frac xa (a^{2} - x^{2}).
\end{equation}
We start with
\begin{equation}\label{nonch}
 \mathbb{E}^{x}\Big[{\rm e}^{-\theta H_{a}} \1_{H_{a}< H_{0}}\Big] = \frac{\sinh(x\sqrt{2 \theta})}{\sinh(a\sqrt{2 \theta})}.
\end{equation}
A reference for the previous formula is, for example, Karatzas-Shreve (second edition) page 100 formula 8.28.
By taking the derivative of the right-hand side of \eqref{nonc1}  we obtain
\begin{equation}\label{nonc1}
\begin{aligned}
&\frac 1{\sinh^{2}(a \sqrt{2 \theta})}\left[\cosh(x \sqrt{2 \theta})\sinh(a\sqrt{2 \theta}) \frac{x}{\sqrt{2 \theta}} - \sinh(x \sqrt{2 \theta})\cosh(a\sqrt{2 \theta}) \frac{a}{\sqrt{2 \theta}}   \right]\\
&\qquad= \frac {a^{2} 2 \theta }{\sinh^{2}(a \sqrt{2 \theta})} \left(\frac 1{a^{2} (2 \theta)^{3/2}}\left[\cosh(x \sqrt{2 \theta})\sinh(a\sqrt{2 \theta}) x - \sinh(x \sqrt{2 \theta})\cosh(a\sqrt{2 \theta}) a   \right]\right).
\end{aligned}
\end{equation}
Use the fact that the expression in the first parenthesis approaches 0 as $\theta \to 0$ and apply De L'Hopital to evaluate the limit in the second parenthesis, which turns out to be the right-hand side of \eqref{nonc}. By taking a second derivative, we get \eqref{noc2}.\hfill \qed
\begin{proposition}\label{nicefu} Suppose that the function $\Psi \colon (0, \infty) \mapsto (0, \infty)$  is increasing and either
\begin{itemize}
\item[a)] $\lim_{x\ti} \Psi(x)/\Psi(x-1)  = \infty $, or
\item[b)] $\Psi$ is  twice  continuously differentiable, with  $\int_{1}^\infty (1/\Psi(u)) {\rm d}u <\infty$,  $\liminf_{x \ti} \Psi'(x) = \infty$,
$\lim_{x\ti} \Psi(x)/\Psi(x-1) $ exists in $[1, \infty)$,
 and
\begin{equation}\label{coms}
\lim_{x \ti} \frac{\Psi''(x) \Psi(x)}{\Psi'(x)^2} >0.
\end{equation}
\end{itemize}
We have
\begin{equation}\label{Psi}
 \liminf_{n \ti} \Psi(n) \frac{\sum_{k=n}^\infty (1/\Psi(k)^2)}{\sum_{k=n}^\infty (1/\Psi(k))} >0.
\end{equation}
\end{proposition}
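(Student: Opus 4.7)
My plan is to split the proof according to $\rho := \lim_{x\to\infty}\Psi(x)/\Psi(x-1) \in [1,\infty]$. When $\rho > 1$ --- this covers case (a) (where $\rho = \infty$) as well as the subregime $\rho \in (1,\infty)$ of case (b) --- a direct geometric-ratio argument suffices. Fixing any $M \in (1,\rho)$, one has $\Psi(k) \geq M^{k-n}\Psi(n)$ for all $k \geq n$ once $n$ is large enough. Therefore
\[
\sum_{k\geq n}\frac{1}{\Psi(k)} \leq \frac{M}{M-1}\cdot\frac{1}{\Psi(n)}, \qquad \sum_{k\geq n}\frac{1}{\Psi(k)^2} \geq \frac{1}{\Psi(n)^2},
\]
so the ratio in \eqref{Psi} is bounded below by $(M-1)/M > 0$, yielding the conclusion in this regime.

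The delicate subregime is case (b) with $\rho = 1$. Here I would first reduce sums to integrals via the monotonicity bounds $\int_n^\infty \Psi(u)^{-p}\, du \leq \sum_{k\geq n}\Psi(k)^{-p} \leq \Psi(n)^{-p} + \int_n^\infty \Psi(u)^{-p}\, du$ for $p\in\{1,2\}$, and then derive sharp asymptotics of the integrals $I_p(n) := \int_n^\infty \Psi(u)^{-p}\, du$ by integration by parts. Writing $1/\Psi^2 = -(1/\Psi)'/\Psi'$ and integrating by parts yields
\[
I_2(n) = \frac{1}{\Psi(n)\Psi'(n)} - \int_n^\infty \frac{\Psi''(u)}{\Psi(u)\,\Psi'(u)^2}\, du,
\]
while the identity $(1/\Psi')' = -\Psi''/\Psi'^2$ gives the clean auxiliary relation $\int_n^\infty \Psi''(u)/\Psi'(u)^2\, du = 1/\Psi'(n)$. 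The pointwise hypothesis $\Psi''\Psi/\Psi'^2 \to L$ translates into $\Psi''/(\Psi\Psi'^2) \sim L/\Psi^2$ and $\Psi''/\Psi'^2 \sim L/\Psi$. A standard ratio-of-tails lemma (if $f/g \to c$ as $u\to\infty$ with $\int_1^\infty g < \infty$, then $\int_n^\infty f / \int_n^\infty g \to c$) then gives
\[
I_2(n) \sim \frac{1}{(1+L)\,\Psi(n)\Psi'(n)}, \qquad I_1(n) \sim \frac{1}{L\,\Psi'(n)},
\]
so that $\Psi(n)\, I_2(n)/I_1(n) \to L/(1+L) > 0$. Returning to sums is harmless because $1/\Psi(n)^p = o(I_p(n))$ in this regime.

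Two side issues must be addressed along the way. First, to apply the ratio-of-tails lemma one needs $L < \infty$; but $\Psi''\Psi/\Psi'^2 \to \infty$ would give $(\ln\Psi')' \gg (\ln\Psi)'$, forcing $\Psi'/\Psi \to \infty$ and hence $\rho = \infty$, contradicting $\rho = 1$. Second, $L \geq 1$ is incompatible with $\rho = 1$: informally, $\Psi' \sim c\Psi^L$ (implicit in the hypothesis) gives exponential growth when $L = 1$ and finite-time blow-up when $L > 1$, both inconsistent with $\rho = 1$ and $\Psi$ being defined on all of $[1,\infty)$. Thus $L \in (0,1)$ in the delicate regime, and the main technical obstacle is the careful derivation of $I_1(n) \sim 1/(L\Psi'(n))$ from the bare pointwise convergence $\Psi''\Psi/\Psi'^2 \to L$; everything hinges on the elegant auxiliary IBP identity $\int_n^\infty \Psi''/\Psi'^2\, du = 1/\Psi'(n)$ combined with the ratio-of-tails lemma.
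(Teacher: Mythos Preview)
Your case split by $\rho$ and the geometric bound for $\rho>1$ are cleaner than the paper's treatment: the paper handles case~(a) by first proving $\limsup_n \Psi(n)\sum_{k\ge n}1/\Psi(k)\le 2$ via a L'H\^opital step, and handles case~(b) in its entirety (all $\rho\in[1,\infty)$) by applying L'H\^opital twice directly to the ratio $\Psi(x)\int_x^\infty\Psi^{-2}\big/\int_x^\infty\Psi^{-1}$, arriving at the same limit $L/(1+L)$ that you obtain by integration by parts and the ratio-of-tails lemma. The two routes are close cousins (the tails lemma is L'H\^opital in disguise), but your explicit identity $\int_n^\infty \Psi''/\Psi'^2\,du=1/\Psi'(n)$ yields the asymptotic $I_1(n)\sim 1/(L\Psi'(n))$ as a clean by-product.

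One side claim is wrong and leaves a small gap. The assertion that $L=1$ is incompatible with $\rho=1$ fails: take $\Psi(x)=e^{\sqrt{x}}$, for which $\Psi''\Psi/\Psi'^2=1-x^{-1/2}\to 1$ while $\Psi(x)/\Psi(x-1)\to 1$, and all the hypotheses of~(b) hold. (Your argument that $L>1$ forces finite-time blow-up is fine.) This matters because your step ``$1/\Psi(n)^p=o(I_p(n))$'' amounts to $\Psi'/\Psi\to 0$, which you presumably intended to deduce from the heuristic $\Psi'\sim c\Psi^L$ with $L<1$. The step is still true but needs a different justification. The simplest fix bypasses it: bound $\sum_{k\ge n}\Psi(k)^{-1}\le I_1(n-1)$ and note $I_1(n-1)/I_1(n)\to\rho=1$ by L'H\^opital on $I_1(x-1)/I_1(x)$, whose derivative ratio is $\Psi(x)/\Psi(x-1)$; this is exactly how the paper passes from sums to integrals. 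Alternatively, show $\Psi'/\Psi\to 0$ directly from $\rho=1$ and $L<\infty$: setting $g=\Psi'/\Psi$ one has $(1/g)'=1-\Psi''\Psi/\Psi'^2\to 1-L$, so $1/g$ has bounded derivative on a tail; combined with $\int_{x-1}^x g\,du=\ln(\Psi(x)/\Psi(x-1))\to 0$, this forces $g\to 0$ even when $L=1$.
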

\begin{proof}
We distinguish the two cases. First assume that  $\lim_{x\ti} \Psi(x)/\Psi(x-1) = \infty.$
In this case, we prove that
\begin{equation}\label{superexp}
\limsup_{n \ti} \Psi(n) \sum_{k=n}^\infty \frac 1{\Psi(k)} \le 2.
\end{equation}
The first step to prove  \eqref{superexp} is to prove that
\begin{equation}\label{supexp1}
\limsup_{n \ti} \Psi(n) \sum_{k=n+1}^\infty  (1/\Psi(k))\le 1.
\end{equation}
To this end, notice that
\begin{equation}\label{supexp2}
\limsup_{n \ti} \Psi(n) \sum_{k=n+1}^\infty  (1/\Psi(k))\le   \limsup_{n \ti} \frac{\int_{n}^\infty (1/\Psi(u)) \d u}{\int_{n}^{n+1} (1/\Psi(u)) \d u}.
\end{equation}
Hence, \eqref{supexp1} is a consequence of De L'Hopital applied to the right hand side of \eqref{supexp2} (with a continuous variable $x$ instead of the discrete $n$) and the assumption $\lim_{x\ti} \Psi(x)/\Psi(x-1) = \infty$, which yield
$$
\lim_{n \ti} \frac{\int_{n}^\infty (1/\Psi(u)) \d u}{\int_{n}^{n+1} (1/\Psi(u)) \d u}=1.
$$
 Hence
$$ \limsup_{n \ti} \Psi(n) \sum_{k=n}^\infty \frac 1{\Psi(k)}  = 1 + \limsup_{n \ti} \Psi(n) \sum_{k=n+1}^\infty \frac 1{\Psi(k)} \le 2,$$
proving \eqref{superexp}.
Finally
$$
\begin{aligned}
 \liminf_{n \ti}  \frac{\Psi(n)\sum_{k=n}^\infty (1/\Psi(k)^2)}{\sum_{k=n}^\infty (1/\Psi(k))}  =  \liminf_{n \ti}  \frac{\Psi(n)^2 \sum_{k=n}^\infty (1/\Psi(k)^2)}{\Psi(n)\sum_{k=n}^\infty (1/\Psi(k))} \ge \left(\limsup_{n \ti} \Psi(n) \sum_{k=n}^\infty \frac 1{\Psi(k)}\right)^{-1}\ge 1/2,
\end{aligned}
$$
proving \eqref{Psi} when \eqref{superexp} holds.

 Next, we move to case b).  Our  assumptions imply that
\begin{equation}\label{laz}
 \lim_{x \ti}  \frac{\int_{x-1}^\infty (1/\Psi(u)) \d u}{\int_{x}^\infty (1/\Psi(u)) \d u}
 \end{equation}
exists in $[1, \infty)$.
Notice that
$$
\liminf_{n \ti} \Psi(n) \frac{\sum_{k=n}^\infty (1/\Psi(k)^2)}{\sum_{k=n}^\infty (1/\Psi(k))} \ge \liminf_{n \ti} \Psi(n) \frac{\sum_{k=n}^\infty (1/\Psi(k)^2)}{\int_{n-1}^\infty (1/\Psi(u)) \d u}.
 $$
 Hence, using \eqref{laz}, we infer that a sufficient condition for  \eqref{Psi} to hold, is that
\begin{equation}\label{goodcondition}
\Psi(n) \sum_{k=n}^\infty (1/\Psi(k)^2) > \eta \int_{n}^\infty (1/\Psi(u)) \d u
\end{equation}
for some $\eta >0$, which is specified below, and for all $n \in \N$. A sufficient condition for \eqref{goodcondition} to hold
is that
\begin{equation}\label{goodc2}
\lim_{x \ti} \frac{ \Psi(x) \int_x^\infty  \frac 1{\Psi(u)^2} \d u}{   \int_x^\infty  \frac 1{\Psi(u)} \d u}  \qquad \mbox{exists and is positive}.
\end{equation}
 Using De L'Hopital in \eqref{goodc2} we require that
\begin{equation}\label{provcond}
 \lim_{x \ti} - \frac{ \Psi'(x) \int_x^\infty  \frac 1{\Psi(u)^2} \d u - (1/\Psi(x))}{   1/(\Psi(x))} >0.
\end{equation}
 \eqref{provcond}  holds if and only if
 $$
 \lim_{x \ti} \frac{\int_x^\infty  \frac 1{\Psi(u)^2} \d u}{1/(\Psi'(x)\Psi(x))} \in [0,1)
 $$
 Using the De L'Hopital one more time,
 $$
 \lim_{x \ti}  \frac{\Psi'(x)^2}{\Psi''(x) \Psi(x) + \Psi'(x)^2} \in [0,1),
 $$
which is a consequence of \eqref{coms}.
\hfill
\end{proof}
\begin{proposition}\label{spben} Suppose that the function $\Psi \colon (0, \infty) \mapsto (0, \infty)$  is   differentiable, with $\Psi'(x)>0$ for all $x$. If $\theta(x)$ is a positive increasing function for which there exists $a\in(0,1/2)$ such that
\begin{equation}\label{prot1}
\frac{\Psi(z + a\ln z )}{\Psi\big(z + \theta(z)\big)} \ge  1 + \frac 1{2z}, \qquad \mbox{for all large  $z$},
\end{equation}
  then
$$ \sum_{k= n-(1/2)\floor{\ln n} -1}^\infty \frac 1{\Psi(k +\theta(k))} \ge \sum_{k= n} \frac 1{\Psi(k)}, \qquad \mbox{for all large n}.$$
\end{proposition}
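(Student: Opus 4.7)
The plan is a near-bijective re-indexing of the two series, using the multiplicative reserve $1+1/(2z)$ in the hypothesis to absorb the integers that the re-indexing occasionally skips. First I would observe that the hypothesis together with the strict monotonicity of $\Psi$ forces $\theta(z)<a\ln z$ for all large $z$, and since $a<1/2$ it follows that $\lceil a\ln z\rceil\le\lfloor(1/2)\ln z\rfloor$ for $z$ large. Writing $n_0$ for the integer starting index of the left-hand sum (so $n-n_0\approx(1/2)\ln n$), define the shift
\[
p(j)\;:=\;j+\lceil a\ln j\rceil,\qquad j\ge n_0.
\]
This map is strictly increasing with $p(j+1)-p(j)\in\{1,2\}$, and $p(n_0)\le n$ for $n$ large. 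Since $j+a\ln j\le p(j)$, combining the hypothesis with the monotonicity of $\Psi$ yields the termwise bound
\[
\frac{1}{\Psi(j+\theta(j))}\;\ge\;\frac{1+1/(2j)}{\Psi(j+a\ln j)}\;\ge\;\frac{1}{\Psi(p(j))}+\frac{1}{2j\,\Psi(p(j))}.
\]

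Summing over $j\ge n_0$ and using that $p$ is injective into the integers $\ge p(n_0)$, I would rearrange to
\[
\sum_{j\ge n_0}\frac{1}{\Psi(j+\theta(j))}\;\ge\;\sum_{m\ge n}\frac{1}{\Psi(m)}-\Delta+E,
\]
where $\Delta:=\sum\{1/\Psi(m):m\ge n,\ m\notin p(\{n_0,n_0+1,\ldots\})\}$ is the deficit from integers skipped by $p$ and $E:=\sum_{j\ge n_0}1/(2j\,\Psi(p(j)))$ is the excess supplied by the hypothesis' multiplicative reserve. The conclusion thus reduces to proving $E\ge\Delta$.

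For this last step I plan a block decomposition. Let $J_i$ be the smallest $j$ with $\lceil a\ln j\rceil=i$, so $J_i=\lceil e^{i/a}\rceil$, and set $B_i:=[J_i,J_{i+1}-1]$. On $B_i$ the shift $p(j)=j+i$ is constant, and at $j=J_{i+1}$ the map $p$ jumps by $2$, skipping precisely the integer $m_i:=p(J_{i+1})-1=J_{i+1}+i$; thus $\Delta=\sum_i 1/\Psi(m_i)$ summed over those $i$ with $m_i\ge n$. For $j\in B_i$ one has $p(j)\le m_i$, hence $\Psi(p(j))\le\Psi(m_i)$, so
\[
\sum_{j\in B_i}\frac{1}{2j\,\Psi(p(j))}\;\ge\;\frac{1}{2\,\Psi(m_i)}\sum_{j=J_i}^{J_{i+1}-1}\frac{1}{j}\;\ge\;\frac{\ln(J_{i+1}/J_i)}{2\,\Psi(m_i)},
\]
and $\ln(J_{i+1}/J_i)\to 1/a$ as $i\to\infty$. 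Since $a<1/2$ we have $1/(2a)>1$, so for $i$ sufficiently large each block's contribution to $E$ strictly exceeds $1/\Psi(m_i)$, the corresponding term of $\Delta$; summing over the relevant blocks yields $E\ge\Delta$. The main obstacle is precisely this last step: the strict inequality $a<1/2$ is exactly what makes $\sum_{j\in B_i}1/j\to 1/a>2$ large enough to beat the factor of $2$ loss coming from the $1/(2j)$ in the reserve, while a borderline $a=1/2$ would give only equality.
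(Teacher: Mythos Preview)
Your approach is correct and genuinely different from the paper's. The paper passes to integrals and uses a single continuous change of variable: setting $V(x)=x-\tfrac12\ln x$ and $h=V^{-1}-1$, it rewrites $\int_{x-(1/2)\ln x}^\infty \Psi(u+\theta(u))^{-1}\,du$ as $\int_{x-1}^\infty [h'(h^{-1}(w))\Psi(h^{-1}(w)+\theta(h^{-1}(w)))]^{-1}\,dw$ and then reduces everything to the pointwise inequality $\Psi(h(u))/\Psi(u+\theta(u))\ge h'(u)$, which follows from \eqref{prot1} once one checks $h(u)\ge u+a\ln u$ and $h'(u)\le 1+1/(2u)$. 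This is slicker---no block decomposition is needed---but your discrete argument is more transparent about where the strict inequality $a<1/2$ enters (it is precisely what makes $\sum_{j\in B_i}1/j\to 1/a>2$). One small gap in your write-up: the block $B_{i_0}$ containing $n_0$ may be only partially summed in $E$, so its contribution can fall short of $1/\Psi(m_{i_0})$ even when $m_{i_0}\ge n$; you should note that the surplus terms $\sum_{p(n_0)\le m<n}1/\Psi(m)$---roughly $(1/2-a)\ln n$ of them, each exceeding $1/\Psi(m_{i_0})$ since $m<n\le m_{i_0}$---absorb this single boundary deficit.
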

\begin{proof}
 It is enough to prove
 \begin{equation}\label{vuot}
   \int_{x - (1/2)\ln x}^\infty \frac 1{\Psi(u +\theta(u))} \d u \ge \int_{x - 1}^\infty \frac 1{\Psi(u)}\d u, \qquad \mbox{for all large $x$}.
   \end{equation}
In order to prove \eqref{vuot}, we apply a suitable change of variable. Let $h(u) = V^{-1}(u)-1$, where $V(x)=x-(1/2)\ln x$ is smooth and one-to-one on $[1,\infty)$. Since by definition $h(x -(1/2)\ln x) = x-1$, a simple change of variables yields
\begin{equation}\label{vuot1}
 \int_{x -(1/2)\ln x}^\infty \frac 1{\Psi(u +\theta(u))} \d u  =  \int_{x - 1}^\infty \frac {1}{ h'\big(h^{-1}(w)\big)\Psi(h^{-1}(w) + \theta(h^{-1}(w))}  \d w.
\end{equation}
Hence, a sufficient condition for \eqref{vuot} to hold is
\begin{equation}\label{vuot3}
\frac {1}{ h'\big(h^{-1}(w)\big)\Psi\big(h^{-1}(w) + \theta(h^{-1}(w))\big)} \ge \frac 1{\Psi(w)}, \qquad \mbox{for all large $w$}
\end{equation}
or equivalently
$$\frac {\Psi(h(u))}{\Psi\big(u + \theta(u)\big)} \ge  h'(u), \qquad \mbox{for all large $u$}.$$
To this end we show that for all large $w$ we have, for $a \in (0, 1/2)$, 
\begin{equation}\label{niceq}
h(w) \ge w + a\ln w:=V_a(w),
\end{equation}
or equivalently that
$$x-1\leq V_a(V(x)) = x-(1/2)\ln x+a\ln(x-(1/2)\ln x)$$
or that
$$a\ln\frac{x}{x-(1/2)\ln x}+((1/2)-a)\ln x\geq1,$$
which is clearly true for $x$ large enough.

In the same way we have that, for $x$ large enough,
$$h'(V(x))=\frac1{1-\frac1{2x}}\leq 1+\frac1{2V(x)},\mbox{ i.e. for $u$ large enough},\  h'(u)\leq 1+\frac1{2u}.$$

It is now clear that \eqref{prot1} implies that
$$\frac{\Psi(h(u))}{\Psi\big(u + \theta(u)\big)}\ge \frac{\Psi(u + a\ln u )}{\Psi\big(u + \theta(u)\big)} \ge 1 + \frac 1{2u} \ge h'(u) , \qquad \mbox{for all large $u$}.$$
\hfill
\end{proof}

\noindent{\bf Acknowledgements}. This research was supported under Australian Research Council's Discovery Projects funding scheme (project numbers DP140100559, DP120102728 and DP150103588), by JSPS KAKENHI (grants number 23330109, 24340022, 23654056, 25285102) and  the project RARE-318984 (an FP7 Marie Curie IRSES).


\end{document}